\numberwithin{equation}{section}
\newtheorem{theorem}{Theorem}[section]
\newtheorem{lemma}[theorem]{Lemma}
\newtheorem{proposition}[theorem]{Proposition}
\newtheorem{corollary}[theorem]{Corollary}
\theoremstyle{definition}
\newtheorem{definition}[theorem]{Definition}
\newtheorem{example}[theorem]{Example}
\newtheorem{assumption}[theorem]{Assumption}
\theoremstyle{remark}
\newtheorem{remark}[theorem]{Remark}
\newcommand{\ind}{\operatorname{ind}}
\newcommand{\id}{\operatorname{id}}
\newcommand{\dom}{\operatorname{dom}}
\newcommand{\CC}{\mathbb{C}}
\newcommand{\RR}{\mathbb{R}}
\newcommand{\ZZ}{\mathbb{Z}}
\newcommand{\AAA}{\mathcal{A}}
\newcommand{\BBB}{\mathcal{B}}
\newcommand{\DD}{\mathcal{D}}
\newcommand{\EE}{\mathcal{E}}
\newcommand{\HH}{\mathcal{H}}
\newcommand{\II}{\mathcal{I}}
\newcommand{\KK}{\mathcal{K}}
\newcommand{\MM}{\mathcal{M}}
\newcommand{\aps}{{\rm APS}}
\newcommand{\daps}{{\rm dAPS}}
\newcommand{\rmc}{\mathrm{c}}
\newcommand{\upper}{\uppercase\expandafter}
\newcommand{\p}{\partial}
\newcommand{\supp}{\operatorname{supp}}
\newcommand{\Hom}{\operatorname{Hom}}
\newcommand{\Res}{\operatorname{Res}}
\newcommand{\tr}{\operatorname{tr}}
\newcommand{\Tr}{\operatorname{Tr}}
\newcommand{\spf}{\operatorname{sf}}
\renewcommand{\AA}{\mathbb{A}}
\newcommand{\tilKK}{\tilde{\mathcal{K}}}
\newcommand{\AS}{\alpha_{\rm AS}}
\newcommand{\oeta}{\bar{\eta}}
\newcommand{\oxi}{\bar{\xi}}
\newcommand{\tilA}{\tilde \AAA}
\newcommand{\tilB}{\tilde \BBB}
\newcommand{\tilPi}{\tilde \Pi}
\begin{document}

\normalsize

\title[Relative eta invariant]{The relative eta invariant for a pair of Dirac-type operators on non-compact manifolds}

\author[Pengshuai Shi]{Pengshuai Shi${}^\ast$}
\address{School of Mathematics and Statistics, Beijing Institute of Technology, Beijing 100081, China}

\email{pengshuai.shi@gmail.com}

\subjclass[2010]{Primary 58J28; Secondary 58J20, 58J30, 58J32, 58J35}

\keywords{Dirac-type operator, non-compact manifold, relative eta, variation formula, spectral flow, gluing law}

\thanks{${}^\ast$Partially supported by Beijing Institute of Technology Research Fund Program for Young Scholars}

\begin{abstract}
Let $\AAA_0$ and $\AAA_1$ be two self-adjoint Fredholm Dirac-type operators defined on two non-compact manifolds. If they coincide at infinity so that the relative heat operator is trace-class, one can define their relative eta function as in the compact case. The regular value of this function at the zero point, which we call the relative eta invariant of $\AAA_0$ and $\AAA_1$, is a generalization of the eta invariant to non-compact situation. We study its variation formula and gluing law. In particular, under certain conditions, we show that this relative eta invariant coincides with the relative eta invariant that we previously defined using APS index of strongly Callias-type operators.
\end{abstract}

\maketitle

\setcounter{tocdepth}{1}
\tableofcontents

\section{Introduction}\label{S:intro}

The eta invariant is a non-local spectral invariant that was first introduced by Atiyah, Patodi and Singer in \cite{APS1} as a boundary contribution of an index formula on manifolds with boundary under APS boundary condition. It can be defined as follows. Let $M$ be a compact Riemannian manifold without boundary, and $D$ be a self-adjoint first-order elliptic differential operator acting on sections of a Hermitian vector bundle $E$ over $M$. It is well-known that $D$ has \emph{pure discrete spectrum} consisting of real eigenvalues with finite multiplicity. Define the eta function of $D$ by
\begin{equation}\label{E:etafunc}
\eta(s;D)\;:=\;\sum_{\lambda\in{\rm spec}(D)\setminus\{0\}}{\rm sign}(\lambda)|\lambda|^{-s},\qquad\text{for }\Re(s)>\dim M.
\end{equation}
It is absolutely convergent in the half-plane $\Re(s)>\dim M$ since the eigenvalues of $D$ satisfy a Weyl's law. Using Mellin transform, the eta function can also be written in terms of the heat operator
\begin{equation}\label{E:etafunc-2}
\eta(s;D)\;=\;\frac{1}{\Gamma((s+1)/2)}\,\int_0^\infty t^{(s-1)/2}\Tr De^{-tD^2}\,dt.
\end{equation}
It follows from this expression that the eta function admits a meromorphic continuation to the whole complex plane. Moreover, $s=0$ is a regular point of $\eta(s;D)$. This was shown in \cites{APS3,Gilkey81globalEta} for general elliptic pseudo-differential operators. Thus the eta invariant of $D$ is defined to be $\eta(D):=\eta(0;D)$.

Since \cites{APS1,APS2,APS3}, the eta invariant has been studied extensively and generalized to various situations, for example, manifolds with boundary \cites{GilkeySmith83,DouglasWoj91}, manifolds with conical singularities \cites{Cheeger83singular,Cheeger87conical,BismutCheeger91remarks}, manifolds with cylindrical ends \cites{KlimekWoj93,Muller94}; and some recent developments including the case of convex co-compact hyperbolic manifolds \cite{GMP10}, manifolds with cusps \cite{LMP11cusps}, manifolds with periodic ends \cite{MRS16}, manifolds with edges \cite{PiazzaVertman19edge}, etc. It turns out that the eta invariant plays more and more important roles in geometry and topology. For more about this, the reader can consult Goette \cite{Goette12compEta} and the references therein.

While it makes perfect sense to talk about eta invariant in many different settings, a general extension to non-compact manifolds is not expected, mainly because of the presence of continuous spectrum. In fact, even if an operator on a non-compact manifold has discrete spectrum, its eta function can still be undefined. However, like Gromov--Lawson's Relative Index Theorem \cite{GromovLawson83}, one can consider a ``relative version'' of the eta invariant. That is, consider two operators on two non-compact manifolds. If the manifolds and the corresponding operators coincide outside compact subsets, then one can define a relative eta function in terms of the relative heat trace of the two operators. Under some conditions, this function may be eligible to produce a relative eta invariant. This question was studied abstractly by M\"uller in \cite{Muller98} under certain assumptions on the asymptotic expansions of relative heat traces. In this paper, we restrict our discussion to Dirac-type operators, as they are a central class of first-order elliptic differential operators of geometric origin and have a wide range of applications. We show that the conditions of \cite{Muller98} can be satisfied in our setting such that a relative eta invariant would exist. In other words, the notion of relative eta invariant is validly established under very general circumstances in the present article. Apart from that, we then study its properties.

Our setting is as follows. Let $\AAA_0$ and $\AAA_1$ be two self-adjoint Dirac-type operators on non-compact manifolds $Y_0$ and $Y_1$, respectively. Suppose $\AAA_0$ and $\AAA_1$ coincide at infinity. Then under some assumptions (Assumption \ref{A:traceclass}), Bunke \cite{Bunke92} showed that $\AAA_1e^{-t\AAA_1^2}-\AAA_0e^{-t\AAA_0^2}$ is a trace-class operator using techniques of heat kernel estimates.\footnote{In \cite{EichhornBook09}, Eichhorn investigated the question of whether such an operator is of trace class in a more general setting and then studied the relative spectral invariants on open manifolds including a brief discussion about relative eta invariant.} Along the line we get that the short time asymptotic expansion of the trace is determined by the corresponding expansions on closed manifolds. If $\AAA_0$ and $\AAA_1$ are Fredholm operators, then the above trace decays exponentially for large time. This enables us to make sense of the following \emph{relative eta function}
\begin{equation}\label{E:releta func}
		\eta(s;\AAA_1,\AAA_0) \;:= \;
	\frac{1}{\Gamma((s+1)/2)}\,\int_0^\infty
	  t^{(s-1)/2}
	     \Tr\big(\, \AAA_1e^{-t\AAA_1^2}-\AAA_0e^{-t\AAA_0^2}\,\big) \,dt
\end{equation}
for $s$ with large real part. In this case, the relative eta function still admits a meromorphic continuation to the whole complex plane. We show the regularity of $\eta(s;\AAA_1,\AAA_0)$ at $s=0$.

\begin{theorem}\label{T:intro-1}
The relative eta function $\eta(s;\AAA_1,\AAA_0)$ is regular at $s=0$.
\end{theorem}

\noindent From this, one can define $\eta(0;\AAA_1,\AAA_0)$ as the \emph{relative eta invariant} of $\AAA_0$ and $\AAA_1$.

One of the most important features of the eta invariant is its role in the APS index theorem. If there is a relative eta invariant, one would expect it to be a part of an APS index formula for manifolds with \emph{non-compact boundary}. In fact, in \cites{BrShi17,BrShi17-2} with M. Braverman, we have studied the APS index problem for strongly Callias-type operators on manifolds with non-compact boundary and reduced the problem to a model space called essentially cylindrical manifolds, whose boundary consists of exactly two non-compact pieces which coincide outside compact subsets. In this situation we got a boundary term in the formula, which we also called relative eta invariant. This term is denoted by $\eta(\AAA_1,\AAA_0)$.

It was conjectured in \cites{BrShi17} that the boundary term in the APS index formula should indeed be the invariant defined from \eqref{E:releta func}. This problem is the main concern of the current paper and is answered partially in Sections \ref{S:varFormula} and \ref{S:gluing law}. To make it clear, we call the one defined from APS index \emph{index-theoretic} relative eta invariant and the one defined from \eqref{E:releta func} \emph{spectral} relative eta invariant. Then the main result can be formulated as

\begin{theorem}\label{T:intro-2}
Let $\AAA_0$ and $\AAA_1$ be two cobordant self-adjoint strongly Callias-type operators over two non-compact manifolds.
\begin{enumerate}
\item If $\AAA_0$ and $\AAA_1$ act on the same domain and satisfy Assumption \ref{A:traceclass}, then $\eta(\AAA_1,\AAA_0)=\eta(0;\AAA_1,\AAA_0)$.
\item If $\AAA_0$ and $\AAA_1$ act on different domains and satisfy Assumption \ref{A:traceclass-bvp}, then $\eta(\AAA_1,\AAA_0)=\eta(0;\AAA_1,\AAA_0)\mod2\ZZ$.
\end{enumerate}
\end{theorem}

Ideally, the equality between the two invariants can be achieved by directly computing the APS index, as  in \cite{FoxHaskell05}. However this requires much stronger conditions. In our cases, Assumption \ref{A:traceclass} is a fairly weak condition which is necessary for the definition of spectral relative eta invariant. And even the stronger one, Assumption \ref{A:traceclass-bvp}, is weaker than the conditions to do direct computations. As a result we must employ a different approach to prove the theorem. The ideas are as follows. For part (\romannumeral1), we consider a linear path linking $\AAA_0$ to $\AAA_1$. Then both invariants vary continuously along the path modulo integer jumps. We computed the variation formulas of the two respective invariants, which turn out to be equal. On the other hand, the integer jumps for both invariants can be described by the spectral flow. Therefore (\romannumeral1) is proved. The main ingredient in proving part (\romannumeral2) is the gluing law of the relative eta invariants. In this process we follow the treatment of Br\"uning and Lesch \cite{BruningLesch99} on compact manifolds and adapt it to the non-compact situation. Compared to the proof of part (\romannumeral1), this method works for general cases but only in the sense of mod $2\ZZ$ at this stage.

The paper is organized as follows. Section \ref{S:ind-eta} provides a brief review on the index-theoretic definition of relative eta invariant in \cites{BrShi17,BrShi17-2}. In Section \ref{S:relheattrace} we introduce our setting and study the short and large time asymptotic behaviors of the relative heat traces. Using the results of Section \ref{S:relheattrace}, we establish the definition of spectral relative eta invariant in Section \ref{S:releta}, stating the regularity (Theorem \ref{T:intro-1}) in Theorem \ref{T:releta func}. In Section \ref{S:varFormula}, the variation formulas for the index-theoretic and spectral relative eta invariants are computed respectively in Theorems \ref{T:varFormula-noncpt-ind} and \ref{T:varFormula-noncpt-sp}, among which the former theorem is less well-known. Together with their relationships with the spectral flow, one derives part (\romannumeral1) of Theorem \ref{T:intro-2}, which is formulated as Theorem \ref{T:releta: ind=sp}. In Section \ref{S:gluing law} we obtain the gluing law for the relative eta invariants in Theorems \ref{T:gluing-ind} and \ref{T:gluing-sp} and combine them to show part (\romannumeral2) of Theorem \ref{T:intro-2}, which is formulated as Theorem \ref{T:releta: ind=sp mod2Z}.

\subsection*{Acknowledgements}

The author would like to thank Prof. Maxim Braverman for helpful discussions and comments and Prof. Gang Tian for encouragements and supports. The author is partially supported by Beijing Institute of Technology Research Fund Program for Young Scholars (No. 3170012222013).

\section{The index theoretic definition of relative eta invariant}\label{S:ind-eta}

We briefly review the results in \cites{BrShi17,BrShi17-2} with M. Braverman about the index-theoretic definition of the relative eta invariant and its properties.

\subsection{The APS index for strongly Callias-type operators and the relative eta invariant}\label{SS:APSind}

Let $X$ be an $(n+1)$-dimensional complete Riemannian manifold\footnote{Here complete means complete as a metric space.}
and $E$ be a Dirac bundle over $X$. Basically, $E$ is a Hermitian vector bundle with a Clifford multiplication $\rmc(\cdot)$ of $TX$ and a connection $\nabla^E$ that is compatible with $\rmc(\cdot)$ (cf. \cite[\S II.5]{LawMic89}). The (compatible) Dirac operator $D:C^\infty(X,E)\to C^\infty(X,E)$ is then defined by
\[
C^\infty(X,E)\,\xrightarrow{\nabla^E}\,C^\infty(X,T^*X\otimes E)\,\rightarrow\, C^\infty(X,TX\otimes E)\,\xrightarrow{\rmc(\cdot)}\,C^\infty(X,E),
\]
where the second arrow is identifying $TX$ and $T^*X$ by the Riemannian metric. It is well known that $D$ is a formally self-adjoint operator. We will consider $E=E^+\oplus E^-$ to be $\ZZ_2$-graded so that $D=D^+\oplus D^-$, where $D^\pm:C^\infty(X,E^\pm)\to C^\infty(X,E^\mp)$.

Let $\Psi=\Psi^+\oplus\Psi^-\in\Hom(E^+\oplus E^-,E^-\oplus E^+)$ be a self-adjoint bundle map.
\[
\DD\;=\;D+\Psi\;=\;(D^++\Psi^+)\oplus(D^-+\Psi^-)\;=:\;\DD^+\oplus\DD^-
\]
is called a \emph{strongly Callias-type operator}, if $\DD^2=D^2+V$, where $V=[D,\Psi]_++\Psi^2$ is a bundle map\footnote{Here $[\cdot,\cdot]_+$ denotes anti-commutator and the condition means that $\Psi$ anti-commutes with the Clifford multiplication.} such that for any $R>0$, there exists a compact set $K_R\Subset X$ with $V(x)\ge R$ for all $x\in X\setminus K_R$. $\Psi$ is called a \emph{Callias potential} and the above $K_R$ is called an \emph{essential support} of $\DD$. $\DD^+$ and $\DD^-$ are formal adjoint to each other.

Suppose $X$ has non-compact boundary and $\DD$ is a product near the boundary $\p X=Y$, i.e.,
\[
\begin{aligned}
\DD^+&\;=\;\rmc(\nu)(\p_u+\AAA), \\
\DD^-&\;=\;\rmc(\nu)(\p_u+\AAA^\sharp),
\end{aligned}
\]
on a neighborhood $Z_r=[0,r)\times Y$ of $Y$. Here $\nu$ is the inward unit normal vector, $\AAA=A-\rmc(\nu)\Psi^+:C^\infty(Y,E^+|_Y)\to C^\infty(Y,E^+|_Y)$ is a self-adjoint strongly Callias-type operator and $\AAA^\sharp=\rmc(\nu)\circ\AAA\circ\rmc(\nu):C^\infty(Y,E^-|_Y)\to C^\infty(Y,E^-|_Y)$, where $A$ is the Dirac operator on $(Y,E^+|_Y)$. We call $\AAA$ (resp. $\AAA^\sharp$) the \emph{restriction} of $\DD^+$ (resp. $\DD^-$) to $Y$.

Note that $\AAA$ and $\AAA^\sharp$ have discrete spectra. Using $\AAA$ and $\AAA^\sharp$, one can define the Sobolev spaces $H_I^s(\AAA)$ and $H_I^s(\AAA^\sharp)$ on $Y$ for any $s\in\RR$ and $I\subset\RR$. Then we can consider the \emph{Atiyah--Patodi--Singer boundary value problem} $\DD_\aps^+$, which is $\DD^+$ with APS boundary condition $H_{(-\infty,0)}^{1/2}(\AAA)$. It is proved in \cite{BrShi17} that $\DD_\aps^+$ is Fredholm and its index is defined to be
\[
\ind\DD_\aps^+\;:=\;\dim\ker\DD_\aps^+\;-\;\dim\ker\DD_\daps^-\;\in\;\ZZ,
\]
where $\DD_\daps^-$ is the operator $\DD^-$ with dual APS boundary condition $H_{(-\infty,0]}^{1/2}(\AAA^\sharp)$. Furthermore, this index problem can be reduced to a so-called \emph{essentially cylindrical manifold} which contains an essential support of $\DD^+$.

Therefore, we will just assume that $X$ is already an essentially cylindrical manifold henceforth. This means that the boundary $Y$ is a disjoint union of two components $Y=Y_0\sqcup Y_1$ such that
\begin{enumerate}
\item
there exist a compact set $K\subset X$, an open manifold $Y'$, and an isometry $X\setminus K\simeq [0,\varepsilon]\times Y'$;

\item
under the above isometry $Y_0\setminus K =\{0\}\times Y'$ and $Y_1\setminus K =\{\varepsilon\}\times Y'$. 
\end{enumerate}
We also assume that $E$ and $\DD$ are in product form on $X\setminus K$. In this case, $X$ is called an \emph{almost compact essential support} of $\DD$.

Let $\AAA_0$ (resp. $-\AAA_1$) be the restriction of $\DD^+$ to $Y_0$ (resp. $Y_1$). One considers the APS boundary value problem $\DD_{B_0\oplus B_1}^+$, where $B_0=H_{(-\infty,0)}^{1/2}(\AAA_0)$ and $B_1=H_{(-\infty,0)}^{1/2}(-\AAA_1)=H_{(0,\infty)}^{1/2}(\AAA_1)$. Let $\alpha_{\rm AS}(\DD^+)$ be the Atiyah--Singer integrand of $\DD^+$. By the product structures of $X$ and $E$ outside $K$, the top degree component of $\AS(\DD^+)$ vanishes on $X\setminus K$. Hence the integral $\int_X\alpha_{\rm AS}(\DD^+)$ is well-defined and finite. In \cite[Theorem 3.4]{BrShi17-2}, we show that $\ind \DD^+_{B_0\oplus B_1}-\int_X\alpha_{\rm AS}(\DD^+)$ is a quantity depending only on the restrictions $\AAA_0$ and $\AAA_1$. This makes it possible to define a relative eta invariant.

\begin{definition}\label{D:almost compact cobordism}
An {\em almost compact cobordism} between $\AAA_0$ and $\AAA_1$ is a pair $(X,\DD)$, where $X$ is an essentially cylindrical manifold with $\p X=Y_0\sqcup Y_1$ and $\DD$ is a $\ZZ_2$-graded self-adjoint strongly Callias-type operator on $X$ such that 
\begin{enumerate}
\item $X$ is an almost compact essential support of $\DD$ so that $\DD$ is product near $\p X$;
\item The restriction of $\DD^+$ to $Y_0$ is equal to $\AAA_0$ and the restriction of $\DD^+$ to $Y_1$ is equal to $-\AAA_1$.
\end{enumerate}
If there exists an almost compact cobordism between $\AAA_0$ and $\AAA_1$ we say that operator $\AAA_0$ is {\em cobordant} to operator $\AAA_1$.
\end{definition}

\begin{definition}\label{D:releta}
Suppose $\AAA_0$ and $\AAA_1$ are cobordant self-adjoint strongly Callias-type operators and let $(X,\DD)$ be an almost compact cobordism between them. Let $\DD_\aps^+$ be $\DD^+$ with the APS boundary conditions $B_0= H_{(-\infty,0)}^{1/2}(\AAA_0)$ and $B_1= H_{(-\infty,0)}^{1/2}(-\AAA_1)$. The (index-theoretic) {\em relative eta invariant} is defined to be 
\[
	\eta(\AAA_1,\AAA_0) \;:= \; 2\,\Big(\ind \DD^+_\aps
	\,-\,\int_X\,\AS(\DD^+)\Big)
	 \,+ \,\dim\ker \AAA_0\,+ \,\dim\ker \AAA_1\;\in\;\RR.
\]
\end{definition}
Note that  $\eta(\AAA_1,\AAA_0)$ is independent of the choice of the cobordism $(X,\DD)$. Sometimes we are interested in the \emph{reduced relative eta invariant}
\begin{equation}\label{E:reduced releta-1}
\xi(\AAA_1,\AAA_0)\;:=\;\frac{1}{2}\,(\eta(\AAA_1,\AAA_0)\,+\,\dim\ker\AAA_1\,-\,\dim\ker\AAA_0).
\end{equation}
Then by definition
\begin{equation}\label{E:reduced releta-2}
\xi(\AAA_1,\AAA_0)\;=\;\ind \DD^+_{B_0\oplus B_1}
	\,-\,\int_X\,\AS(\DD^+)
	 \,+ \,\dim\ker \AAA_1.
\end{equation}

\begin{remark}\label{R:releta}
When $\dim X$ is odd, $\AS(\DD^+)$ vanishes and $\eta(\AAA_1,\AAA_0)$, $\xi(\AAA_1,\AAA_0)$ are integers. Otherwise they are just real numbers.
\end{remark}

\begin{proposition}[\cites{BrShi17,BrShi17-2}]\label{P:reletainv-prop}
The relative eta invariant defined above satisfies the following properties
\begin{enumerate}
\item $\eta(\AAA_0,\AAA_0)=0$,
\item $\eta(\AAA_2,\AAA_1)+\eta(\AAA_1,\AAA_0)=\eta(\AAA_2,\AAA_0)$.
\end{enumerate}
\end{proposition}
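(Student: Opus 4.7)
The plan is to exploit the independence of $\eta(\AAA_1,\AAA_0)$ from the chosen almost compact cobordism (Remark~\ref{R:releta}(1)), together with additivity of the Atiyah--Singer integrand and a splitting formula for the APS index.

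For (i), I would take as cobordism from $\AAA_0$ to itself the trivial cylinder $X=[0,\varepsilon]\times Y_0$ with the pulled-back Dirac bundle and the product operator $\DD^+=\rmc(\p_u)(\p_u+\AAA_0)$. Since the restriction at $u=\varepsilon$, read using the inward normal $-\p_u$, is $-\AAA_0$, this is indeed an almost compact cobordism from $\AAA_0$ to $\AAA_0$. The local index density of a Dirac-type operator in product form vanishes pointwise, so $\int_X\AS(\DD^+)=0$. Separation of variables shows that any $\phi\in\ker\DD^+$ has the form $\phi(u,\cdot)=e^{-u\AAA_0}\phi(0,\cdot)$; the APS condition at $Y_0$ selects eigensections of $\AAA_0$ with eigenvalues $<0$, while the APS condition at $Y_1$ (which is with respect to $-\AAA_0$) translates to eigenvalues $>0$. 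These being incompatible, $\ker\DD^+_\aps=0$. The analogous argument for the dAPS condition, which allows non-strict spectral inequalities on both ends, forces $\phi$ into $\ker\AAA_0$, so $\dim\ker\DD^-_\daps=\dim\ker\AAA_0$. Plugging these into Definition~\ref{D:releta} gives
\[
\eta(\AAA_0,\AAA_0)\;=\;2\bigl(-\dim\ker\AAA_0-0\bigr)+2\dim\ker\AAA_0\;=\;0.
\]

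For (ii), I would pick almost compact cobordisms $(X_{01},\DD_{01})$ from $\AAA_0$ to $\AAA_1$ and $(X_{12},\DD_{12})$ from $\AAA_1$ to $\AAA_2$, and form their glued union $X_{02}:=X_{01}\cup_{Y_1}X_{12}$. Because the restriction of $\DD_{01}^+$ at $Y_1$ is $-\AAA_1$ whereas that of $\DD_{12}^+$ is $+\AAA_1$, the product structures on the two sides patch together smoothly with compatible orientations, producing a strongly Callias-type operator $\DD_{02}$ that is an almost compact cobordism from $\AAA_0$ to $\AAA_2$. Locality of the Atiyah--Singer integrand yields
\[
\int_{X_{02}}\AS(\DD_{02}^+)\;=\;\int_{X_{01}}\AS(\DD_{01}^+)\,+\,\int_{X_{12}}\AS(\DD_{12}^+),
\]
and the crucial analytic input is the APS splitting formula
\[
\ind\DD_{02,\aps}^+\;=\;\ind\DD_{01,\aps}^+\,+\,\ind\DD_{12,\aps}^+\,+\,\dim\ker\AAA_1,
\]
whose correction term $\dim\ker\AAA_1$ reflects that the APS projections at $Y_1$ coming from the two sides both use strict spectral inequalities, and so fail to be complementary by exactly the zero-mode subspace of $\AAA_1$. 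Substituting both additivity statements into Definition~\ref{D:releta}, the extra $2\dim\ker\AAA_1$ appearing in the sum $\eta(\AAA_1,\AAA_0)+\eta(\AAA_2,\AAA_1)$ cancels precisely against the $2\dim\ker\AAA_1$ produced by the splitting correction in $2\ind\DD_{02,\aps}^+$, yielding $\eta(\AAA_2,\AAA_0)$.

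The main obstacle is the APS splitting formula above. Proving it requires decomposing elements of $\ker\DD_{02,\aps}^\pm$ into pieces on each half and carefully tracking the zero-mode contributions from $\AAA_1$ permitted on the glued manifold but not on either half separately. Analogous formulas are standard in classical APS theory (Br\"uning--Lesch, Bunke); in the strongly Callias-type setting one can obtain it by a spectral-flow argument in the spirit of~\cite{BrShi17-2}, using that $\AAA_1$ has discrete spectrum. Once this formula is in place, the rest of (ii) is purely algebraic.
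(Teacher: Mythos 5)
Your proposal is correct and follows the route taken in \cites{BrShi17,BrShi17-2}, from which the paper quotes this proposition. For (i) the trivial-cylinder computation is exactly right: separation of variables gives $\ker\DD^+_{\aps}=0$ and $\dim\ker\DD^-_{\daps}=\dim\ker\AAA_0^\sharp=\dim\ker\AAA_0$, and since on a product cylinder $\DD^+\DD^-=\rmc(\nu)\DD^-\DD^+\rmc(\nu)^{-1}$ the two local heat traces agree pointwise, so $\AS(\DD^+)\equiv 0$; Definition~\ref{D:releta} then yields $\eta(\AAA_0,\AAA_0)=0$. For (ii), your algebra correctly reduces the cocycle identity to the index splitting formula
\[
\ind\DD_{02,\aps}^+\;=\;\ind\DD_{01,\aps}^+\,+\,\ind\DD_{12,\aps}^+\,+\,\dim\ker\AAA_1,
\]
and the sign and interpretation of the correction term are right: the boundary conditions $H^{1/2}_{(0,\infty)}(\AAA_1)$ (seen from $X_{01}$) and $H^{1/2}_{(-\infty,0)}(\AAA_1)$ (seen from $X_{12}$) are complementary in $H^{1/2}(\AAA_1)$ only up to the finite-dimensional space $\ker\AAA_1$. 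This splitting theorem is the genuine analytic content, and it is proved in Section~5 of \cite{BrShi17} by the B\"ar--Ballmann/Br\"uning--Lesch deformation-of-boundary-conditions method (not by a spectral-flow argument as you tentatively suggest); but as a citable ingredient your statement of it is accurate and your identification of it as the main obstacle is fair. You should also note, which you implicitly use, that the glued $X_{02}$ is again essentially cylindrical and that $\DD_{02}$ remains a strongly Callias-type operator whose essential support is compact, so that the glued pair is a legitimate almost compact cobordism and Remark~\ref{R:releta}(i) applies.
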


\subsection{Relative eta invariant and the spectral flow}\label{SS:sf}

Suppose $\AA:= \{\AAA_r\}_{0\le r\le 1}$ is a smooth family of self-adjoint elliptic operators on a closed manifold. Let $\oeta(\AAA_r)\in \RR/\ZZ$ denote the mod $\ZZ$ reduction of the eta invariant $\eta(\AAA_r)$. Atiyah, Patodi and Singer \cite{APS3}, showed that $r\mapsto \oeta(\AAA_r)$ is a smooth function whose derivative $\frac{d}{dr}\oeta(\AAA_r)$ is given by an explicit local formula. 
Further, they introduced a notion of spectral flow $\spf(\AA)$ (which is roughly the net number of eigenvalues that change sign when $s$ changes from 0 to 1) and showed that it can be computed in terms of the eta invariant, i.e., 
\begin{equation}\label{E:sf-eta}
	2\spf(\AA)\;= \;
	\eta(\AAA_1)\;- \;\eta(\AAA_0)\;- \;
	\int_0^1\Big(\frac{d}{dr}\oeta(\AAA_r)\,\Big)\, dr. 
\end{equation}

In \cites{BrShi17,BrShi17-2}, we consider a family of self-adjoint strongly Callias-type operators $\AA= \{\AAA_r\}_{0\le r\le 1}$ on a complete \emph{non-compact} manifold. In this case one can still define the spectral flow $\spf(\AA)$ of the family $\AA$ and we get a formula similar to $\eqref{E:sf-eta}$.

\begin{theorem}[\cite{BrShi17-2}]\label{T:sp flow}
Let
\(\AA  =  \big\{
	  \AAA_r:\,C^\infty(Y_1,E_1)\to C^\infty(Y_1,E_1)\big\}_{0\le r\le 1}  
\)
be a smooth family of self-adjoint strongly Callias-type operators on a complete Riemannian manifold $Y_1$. Assume that $\AAA_r$ is constant in $r$ outside a compact subset of $Y_1$. Assume also that $\AAA_0$ and $\AAA_1$ are invertible. Let $\AAA^0:C^\infty(Y_0,E_0)\to C^\infty(Y_0,E_0)$ be an invertible self-adjoint strongly Callias-type operator on a complete Riemannian manifold $Y_0$ which is cobordant to the family $\AA$. Then the mod $\ZZ$ reduction $\oeta(\AAA_r,\AAA^0)\in \RR/\ZZ$ of the relative eta invariant depends smoothly on $r\in[0,1]$ and 
\begin{equation}\label{E:sp flow}
	\eta(\AAA_1,\AAA^0)\;- \;\eta(\AAA_0,\AAA^0)
	\;- \;\int_0^1\Big(\frac{d}{dr}\oeta(\AAA_r,\AAA^0)\Big)\,dr
	\;= \;2\,\spf(\AA).
\end{equation}
\end{theorem}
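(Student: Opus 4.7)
The plan is to apply the additivity property from Proposition \ref{P:reletainv-prop}(ii) to reduce the assertion to an evaluation of $\eta(\AAA_1,\AAA_0)$, and then realize that evaluation as the APS index of a suitable strongly Callias-type operator on a cylindrical cobordism carrying the family $\AA$.

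\emph{Smoothness and reduction.} For each $r$, pick an almost compact cobordism $(X_r,\DD_r)$ between $\AAA^0$ and $\AAA_r$ depending smoothly on $r$. Since $\dim\ker\AAA^0$ is constant and the APS index of $\DD_r^+$ is an integer, Definition \ref{D:releta} gives
\[
\oeta(\AAA_r,\AAA^0)\;\equiv\;-\,2\int_{X_r}\AS(\DD_r^+)\pmod{\ZZ},
\]
whose right-hand side is a local expression smooth in $r$; hence $\oeta(\AAA_r,\AAA^0)\in\RR/\ZZ$ is smooth and $\tfrac{d}{dr}\oeta(\AAA_r,\AAA^0)$ lifts uniquely to a smooth $\RR$-valued function. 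By Proposition \ref{P:reletainv-prop}(ii) the left-hand side of \eqref{E:sp flow} equals $\eta(\AAA_1,\AAA_0)$, so it remains to prove
\[
\eta(\AAA_1,\AAA_0)\;=\;2\spf(\AA)\,+\,\int_0^1\frac{d}{dr}\oeta(\AAA_r,\AAA^0)\,dr.
\]

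\emph{Cylindrical cobordism and the $\AS$-integral.} On $X_{\rm cyl}:=[0,1]\times Y_1$ set $\DD^+:=\rmc(\p_r)(\p_r+\AAA_r)$. The hypothesis that $\AAA_r$ is constant outside a compact subset of $Y_1$ makes $X_{\rm cyl}$ essentially cylindrical, so $(X_{\rm cyl},\DD)$ is an almost compact cobordism from $\AAA_0$ to $\AAA_1$; invertibility of $\AAA_0,\AAA_1$ and Definition \ref{D:releta} give
\[
\eta(\AAA_1,\AAA_0)\;=\;2\,\ind\DD^+_\aps\;-\;2\int_{X_{\rm cyl}}\AS(\DD^+).
\]
Using sub-cobordisms $[0,r]\times Y_1$ appended to a fixed cobordism between $\AAA^0$ and $\AAA_0$, the smooth lift from the previous step yields
\[
\int_{X_{\rm cyl}}\AS(\DD^+)\;=\;-\,\tfrac{1}{2}\int_0^1\frac{d}{dr}\oeta(\AAA_r,\AAA^0)\,dr.
\]

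\emph{Index equals spectral flow.} The remaining identity is $\ind\DD^+_\aps=\spf(\AA)$, the Callias-type analogue of the classical identification of the APS index on a mapping cylinder with the spectral flow of its boundary family. I would establish it by first perturbing $\AA$ so that its spectral crossings in $(0,1)$ are transverse and isolated, evaluating the index on a short sub-cylinder around a single crossing via an explicit eigensection expansion within the Callias $L^2$-framework of \cite{BrShi17}, and then invoking additivity of both $\ind\DD^+_\aps$ and $\spf$ under a subdivision of $[0,1]$ together with the vanishing of both on intervals where the family stays invertible. Combining the three displayed identities yields \eqref{E:sp flow}. The main obstacle is precisely this last step: the crossing analysis must be carried out on the non-compact slice $Y_1$ inside the Callias functional-analytic setup, where standard closed-manifold eigenfunction arguments do not directly apply and one must verify that cutting and gluing the cylinder preserve the APS Fredholm property and the index count.
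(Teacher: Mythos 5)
The paper does not prove this theorem; it is imported verbatim from \cite{BrShi17-2}, so there is no in-paper argument to compare against. Evaluating your proposal on its own terms: the reduction via Proposition \ref{P:reletainv-prop}(ii) to the assertion $\eta(\AAA_1,\AAA_0)=2\spf(\AA)+\int_0^1\frac{d}{dr}\oeta(\AAA_r,\AAA^0)\,dr$ is correct, and the mapping-cylinder cobordism $\DD^+=\rmc(\partial_r)(\partial_r+\AAA_r)$ on $[0,1]\times Y_1$ is the right object, after reparametrizing so that $\AAA_r$ is constant in $r$ near $r\in\{0,1\}$ — as the paper itself does with the function $\rho$ in Example \ref{Ex:compute releta} — to make $\DD$ genuinely product near the boundary as Definition \ref{D:almost compact cobordism} requires. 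That same caveat weakens your sub-cobordism argument for the $\AS$-integral identity: each appended slab $[0,r]\times Y_1$ must also be product near its new boundary $\{r\}\times Y_1$, so $\int_{X_r}\AS(\DD_r^+)$ is not simply a restriction of one cylinder integral. It is cleaner to obtain $\int_{X_{\rm cyl}}\AS(\DD^+)=-\tfrac12\int_0^1\frac{d}{dr}\oeta(\AAA_r,\AAA^0)\,dr$ by combining Theorem \ref{T:varFormula-noncpt-ind}, which gives $\frac{d}{dr}\oeta(\AAA_r,\AAA^0)=-\tfrac{2}{\sqrt\pi}\int_{Y_1}c_n(r)$, with the transgression identity $\int_{[0,1]\times Y_1}\AS(\DD^+)=\tfrac{1}{\sqrt\pi}\int_0^1\int_{Y_1}c_n(r)\,dr$ that the same heat-kernel calculation yields on a cylinder.

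The genuine gap is the step you flag yourself: $\ind\DD^+_\aps=\spf(\AA)$. After your reductions this identity \emph{is} the theorem; all the content sits there. Your sketch (perturb to transverse isolated crossings, compute the index on a short sub-cylinder around one crossing, conclude by additivity of $\ind$ and $\spf$ under subdivision and their vanishing on invertible intervals) is the standard and correct strategy, but none of it is carried out. In particular one must verify: that one can perturb within the class of strongly Callias-type families to achieve transversality while preserving the Fredholm framework; that the APS boundary value problem on a cylinder over the non-compact slice $Y_1$ admits the required splitting/gluing with an index sum rule (the splitting theorem of \cite{BrShi17} is relevant here); and that the index over a short sub-cylinder containing a single transverse zero crossing equals $\pm1$, which requires an eigensection analysis in the Callias $L^2$ setting where the discreteness of $\operatorname{spec}(\AAA_r)$ must be invoked explicitly. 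As written, the proposal reduces the theorem to an equivalent statement but does not prove it.
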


When $\dim Y_0=\dim Y_1$ is even, by Remark \ref{R:releta}, the integral term on the left hand side of \eqref{E:sp flow} vanishes.

Essentially, the proof of the theorem does not depend on the invertibility assumption. In fact, we have

\begin{corollary}\label{C:sp flow}
For $0\le r\le1$, let $\xi(\AAA_r,\AAA^0)$ be the reduced relative eta invariant defined in \eqref{E:reduced releta-1}. Then under the same hypothesis as in Theorem \ref{T:sp flow} except for the assumption that $\AAA_0,\AAA_1$ or $\AAA^0$ being invertible, one has
\begin{equation}\label{E:sp flow-1}
	\xi(\AAA_1,\AAA^0)\;- \;\xi(\AAA_0,\AAA^0)
	\;- \;\frac{1}{2}\int_0^1\,\Big(\frac{d}{dr}\oeta(\AAA_r,\AAA^0)\Big)\,dr
	\;= \;\spf(\AA).
\end{equation}
\end{corollary}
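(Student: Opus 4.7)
The approach I would take is to reduce the statement to Theorem~\ref{T:sp flow} by a small spectral perturbation that restores invertibility at the endpoints. First, pick $\varepsilon>0$ so small that $\varepsilon$ does not lie in the (discrete) spectrum of any of $\AAA_0$, $\AAA_1$, or $\AAA^0$; this is possible since strongly Callias-type operators have discrete spectrum. Then $\AAA_0-\varepsilon$, $\AAA_1-\varepsilon$, and $\AAA^0-\varepsilon$ are invertible. Since shifting by a constant preserves both the strongly Callias-type structure and the property of being constant in $r$ outside a compact set, and since one can modify the Callias potential of the original cobordism in a collar of the boundary to realize the boundary shift, the perturbed family $\{\AAA_r-\varepsilon\}_{0\le r\le 1}$ is still cobordant to $\AAA^0-\varepsilon$. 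Theorem~\ref{T:sp flow} then yields
\[
\eta(\AAA_1-\varepsilon,\AAA^0-\varepsilon)-\eta(\AAA_0-\varepsilon,\AAA^0-\varepsilon)-\int_0^1\frac{d}{dr}\oeta(\AAA_r-\varepsilon,\AAA^0-\varepsilon)\,dr=2\,\spf(\{\AAA_r-\varepsilon\}).
\]

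The next step is to convert each term to its unshifted analog. Shifting $\AAA$ by $-\varepsilon$ moves its zero-modes to $-\varepsilon$; tracking how the APS index in Definition~\ref{D:releta} jumps under this perturbation gives
\[
\eta(\AAA_i-\varepsilon,\AAA^0-\varepsilon)=\eta(\AAA_i,\AAA^0)-\dim\ker\AAA_i+\dim\ker\AAA^0,\qquad i=0,1,
\]
so that, after invoking \eqref{E:reduced releta-1}, the eta difference on the left rewrites as $2[\xi(\AAA_1,\AAA^0)-\xi(\AAA_0,\AAA^0)]-2(\dim\ker\AAA_1-\dim\ker\AAA_0)$. The integral term is unaffected: the maps $\oeta(\AAA_r-\varepsilon,\AAA^0-\varepsilon)$ and $\oeta(\AAA_r,\AAA^0)$ agree as smooth functions $[0,1]\to\RR/\ZZ$ because they differ only by integer-valued jumps coming from kernel dimensions, so their derivatives coincide.

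Finally, I would compare the two spectral flows. With the convention implicit in Theorem~\ref{T:sp flow} -- counting eigenvalue transitions from $(-\infty,0)$ into $[0,\infty)$ with sign $+1$ and the reverse with sign $-1$ -- a direct case-by-case inspection of how the endpoint zero-modes move under the $-\varepsilon$ shift establishes
\[
\spf(\AA)=\spf(\{\AAA_r-\varepsilon\})+\dim\ker\AAA_1-\dim\ker\AAA_0.
\]
Substituting this identity into the previous line and dividing by two produces \eqref{E:sp flow-1}. I expect the main obstacle to be the eta-shift identity: one has to construct the perturbed cobordism carefully within the strongly Callias-type framework and verify that the APS index jumps by precisely $\dim\ker\AAA_i-\dim\ker\AAA^0$, with the Atiyah--Singer integrand contribution being unchanged in the limit $\varepsilon\to 0$. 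Once this compatibility between the index-theoretic definition and the $-\varepsilon$ shift is in hand, the remaining spectral-flow bookkeeping and algebraic identities complete the argument.
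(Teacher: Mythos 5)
The paper proves this corollary implicitly: the remark preceding it states that the proof of Theorem \ref{T:sp flow} (which lives in \cite{BrShi17-2}) does not actually use the invertibility hypothesis once the statement is rewritten in terms of the reduced invariant $\xi$; the corollary is then obtained by re-running that argument. Your perturbation route is genuinely different, and it is a natural idea, but it has a structural problem: \emph{a constant shift destroys the strongly Callias-type condition}. If $\AAA = A + \Psi$ with $A$ the compatible Dirac operator, then
\[
(\AAA-\varepsilon)^2 - A^2 \;=\; \big(\AAA^2 - A^2\big)\;-\;2\varepsilon A\;+\;\big(\varepsilon^2-2\varepsilon\Psi\big),
\]
and the term $-2\varepsilon A$ is first order, not a bundle endomorphism. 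Equivalently, the scalar $-\varepsilon\cdot\id$ commutes, rather than anti-commutes, with Clifford multiplication, so $\Psi-\varepsilon$ is not an admissible Callias potential. Consequently $\AAA_r-\varepsilon$ and $\AAA^0-\varepsilon$ are not strongly Callias-type, they do not sit as restrictions of a strongly Callias-type $\DD^+$, and the index-theoretic invariant $\eta(\AAA_r-\varepsilon,\AAA^0-\varepsilon)$ of Definition \ref{D:releta} is simply not defined. Since Theorem \ref{T:sp flow} is stated exclusively for strongly Callias-type operators, it cannot be applied to the shifted family, and the whole reduction collapses at the first step (``Theorem \ref{T:sp flow} then yields\dots'').

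Even granting a perturbation that stays in the correct class, two further points in your sketch would need care. First, the ``eta-shift identity'' $\eta(\AAA_i-\varepsilon,\AAA^0-\varepsilon)=\eta(\AAA_i,\AAA^0)-\dim\ker\AAA_i+\dim\ker\AAA^0$ is not exact: the perturbed cobordism has a different Atiyah--Singer integrand, producing an $O(\varepsilon)$ error $-2\Delta_\varepsilon(r)=-2\big(\int_X\AS(\DD_r^{+,\varepsilon})-\int_X\AS(\DD_r^+)\big)$, and the corresponding change in $\oeta(\AAA_r-\varepsilon,\AAA^0-\varepsilon)$ is $-2\Delta_\varepsilon(r)\bmod\ZZ$, which is not integer-valued, so the claim that the derivatives agree ``because they differ only by integer-valued jumps'' is not correct as stated. (These errors do cancel between the endpoint terms and the integral term, but that cancellation has to be exhibited; it is not automatic.) Second, choosing $\varepsilon$ merely outside the spectra of $\AAA_0,\AAA_1,\AAA^0$ is not enough; you need $\varepsilon$ smaller than the spectral gap of each at $0$, otherwise eigenvalues in $(0,\varepsilon)$ would change sign and spoil both the spectral-flow bookkeeping and the boundary-condition comparison $B_0^\varepsilon = B_0\oplus\ker\AAA^0$. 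None of these secondary points is fatal, but the incompatibility of the constant shift with the Callias structure is, and it means the proposal does not establish the corollary.
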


\subsection{A conjecture on the spectral interpretation of the relative eta invariant}\label{SS:conj-releta}

For the eta invariant on a compact manifold, one can see from \eqref{E:etafunc} or \eqref{E:etafunc-2} that it is a spectral invariant. From this perspective, the relationship \eqref{E:sf-eta} between it and the spectral flow becomes natural.

As mentioned in the Introduction, to give a spectral interpretation to the relative eta invariant defined above, one needs to generalize \eqref{E:etafunc-2} to \eqref{E:releta func} and relate it to $\eta(\AAA_1,\AAA_0)$. Therefore we conjectured in \cite{BrShi17} that if \eqref{E:releta func} is defined, analytic and regular at $s=0$, then $\eta(\AAA_1,\AAA_0)=\eta(0;\AAA_1,\AAA_0)$.

In the following sections, we will investigate the relative eta function and its regularity at zero for a larger class of operators and give a partial answer to this conjecture.

\section{Relative heat trace and its asymptotic properties}\label{S:relheattrace}

The purpose of this section is to study the short time asymptotic expansion and large time asymptotic behavior of the term $\Tr(\AAA_1e^{-t\AAA_1^2}-\AAA_0e^{-t\AAA_0^2})$ appearing in \eqref{E:releta func}. In next section we will use these properties to discuss the well-definedness of the relative eta function \eqref{E:releta func}. It turns out that for these discussions, the requirements for the operators can be weakened as follows.

\begin{definition}\label{D:condition at infinity}
For $j=0,1$, let $\AAA_j$ be a self-adjoint Dirac-type operator over a complete non-compact $n$-dimensional Riemannian manifold $Y_j$ without boundary and acting on sections of a Dirac bundle $E_j\to Y_j$. We say that $\AAA_0$ and $\AAA_1$ \emph{coincide and are invertible at infinity}\footnote{In this paper, we use ``at infinity'' to mean outside a compact set.},
if
\begin{enumerate}
\item they coincide at infinity, namely there exist compact subsets $K_0\Subset Y_0$ and $K_1\Subset Y_1$ and an isometry $\II:Y_0\setminus K_0\cong Y_1\setminus K_1$ which is covered by a bundle isometry $\tilde{\II}:E_0|_{Y_0\setminus K_0}\cong E_1|_{Y_1\setminus K_1}$ such that $\AAA_1=\tilde{\II}\circ\AAA_0\circ\tilde{\II}^{-1}$;

\item they are invertible at infinity, namely there exists a constant $C>0$ such that,
\[
\Vert\AAA_js_j\Vert_{L^2(Y_j,E_j)}\;\ge\;C\,\Vert s_j\Vert_{L^2(Y_j,E_j)},
\]
for any $s_j\in C_0^\infty(Y_j,E_j)$ whose support lies in $Y_j\setminus K_j$.
\end{enumerate}
\end{definition}

Throughout the paper, we will just identify $\AAA_0$ and $\AAA_1$ (i.e., $\II=\tilde{\II}=\id$) on the set $U:=Y_0\setminus K_0\cong Y_1\setminus K_1$ whenever there are no confusions. By \cite[Theorem 2.1]{Anghel93}, $\AAA_0$ and $\AAA_1$ are Fredholm operators. In particular, the essential spectra of $\AAA_0^2$ and $\AAA_1^2$ have a common positive lower bound.

\subsection{The trace class property}\label{SS:traceclass}

The first step is to show that $\AAA_1e^{-t\AAA_1^2}-\AAA_0e^{-t\AAA_0^2}$ is a trace class operator. Here we recall the result obtained by Bunke in \cite{Bunke92}. We regard both $\AAA_0$ and $\AAA_1$ as operators on $Y_0\cup_U Y_1:=K_0\sqcup K_1\sqcup U$ such that $\AAA_0$ is $0$ on $K_1$ and $\AAA_1$ is $0$ on $K_0$. Thus both operators act on the Hilbert space
\begin{equation}\label{E:space decomp}
\HH\;:=\;L^2(K_0,E_0|_{K_0})\,\oplus\,L^2(K_1,E_1|_{K_1})\,\oplus\,L^2(U,E_0|_U).
\end{equation}
Canonically, there is an orthogonal projection $P_j$ ($j=0,1$) from $\HH$ onto $L^2(Y_j,E_j)$. In this setting, the operator defined by $\AAA_j$ acts on the image of $P_j$ and acts as 0 on its orthogonal complement. 

In \cite{Bunke92}, Bunke proved that under certain conditions, $e^{-t\AAA_1^2}-e^{-t\AAA_0^2}$ and $\AAA_1e^{-t\AAA_1^2}-\AAA_0e^{-t\AAA_0^2}$ are trace-class operators. The technique used is heat kernel estimates. Although the operator $\AAA_j$ ($j=0,1$) discussed in \cite{Bunke92} is a generalized Dirac operator (without potential), the proof works for Dirac-type operators as well.

\begin{theorem}\label{T:traceclass}
Let $\AAA_0$ and $\AAA_1$ be two self-adjoint Dirac-type operators on $(Y_0,E_0)$ and $(Y_1,E_1)$, respectively which coincide at infinity. Assume that $Y_j$ has bounded sectional curvature, $\AAA_j^2-\nabla_j^*\nabla_j$ is a bundle endomorphism and that there is a lower bound for $\mathcal{R}=\AAA_0^2-\nabla_0^*\nabla_0=\AAA_1^2-\nabla_1^*\nabla_1$ on $U$, where $\nabla_j$ is the connection of the bundle $E_j$. Then $e^{-t\AAA_1^2}-e^{-t\AAA_0^2}$ and $\AAA_1e^{-t\AAA_1^2}-\AAA_0e^{-t\AAA_0^2}$ are trace-class operators for all $t>0$.
\end{theorem}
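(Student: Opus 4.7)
My plan is to prove both trace-class statements by a heat-kernel comparison argument, following the strategy Bunke developed for generalized Dirac operators in \cite{Bunke92}. The geometric hypotheses are precisely what is needed to run the Cheeger--Gromov--Taylor heat-kernel machinery for general self-adjoint operators of the form $\n^*\n+\mathcal{R}$, so the extension from genuine Dirac operators to Dirac-type operators requires no new ingredient beyond verifying that each step still applies.

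First, I would record uniform Gaussian upper bounds on the Schwartz kernels $k_j(t,x,y)$ of $e^{-t\AAA_j^2}$ and on those of $\AAA_je^{-t\AAA_j^2}$: for every $T>0$ there is a constant $C=C(T)$ such that
\[
|k_j(t,x,y)|\,+\,t^{1/2}\,|(\AAA_jk_j)(t,x,y)|\;\le\; C\,t^{-N/2}\,e^{-d(x,y)^2/(Ct)},\qquad 0<t\le T,
\]
uniformly in $x,y\in Y_j$, with $N=\dim Y_j$. These are standard consequences of bounded sectional curvature together with the Weitzenb\"ock-type identity $\AAA_j^2=\n_j^*\n_j+\mathcal{R}$ with bounded $\mathcal{R}$, and similar bounds hold for a few covariant derivatives.

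Next, I would perform a localized comparison. Identify $Y_0\setminus K_0\cong Y_1\setminus K_1=U$ and choose smooth cutoffs $\chi_0,\chi_1$ with $\chi_0+\chi_1=1$, $\chi_j\equiv 1$ near $K_j$, and $\supp(d\chi_j)$ contained in a fixed compact annulus inside $U$; pick slightly larger auxiliary cutoffs $\tilde\chi_j\equiv 1$ on $\supp\chi_j$ with $\supp(d\tilde\chi_j)$ also inside $U$. Form the parametrix
\[
Q(t)\;:=\;\tilde\chi_0\,e^{-t\AAA_0^2}\,\chi_0\;+\;\tilde\chi_1\,e^{-t\AAA_1^2}\,\chi_1
\]
acting on $\HH$ as in \eqref{E:space decomp}. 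A Duhamel-type identity expresses the remainders $e^{-t\AAA_j^2}-Q(t)$ as time integrals whose integrands involve the commutators of $\AAA_j^2$ with the $\tilde\chi_k$ --- first-order differential operators supported in a fixed compact annulus inside $U$ where $\AAA_0$ and $\AAA_1$ coincide. Writing $e^{-t\AAA_1^2}-e^{-t\AAA_0^2}=(e^{-t\AAA_1^2}-Q(t))-(e^{-t\AAA_0^2}-Q(t))$ and combining with the Gaussian bounds from the previous step yields a pointwise estimate
\[
\big|(k_1-k_0)(t,x,y)\big|\;\le\; C(T)\,e^{-c\,\big(d(x,K_0\cup K_1)^2+d(y,K_0\cup K_1)^2\big)/t},\qquad 0<t\le T,
\]
and an analogous estimate for the kernel of $\AAA_1e^{-t\AAA_1^2}-\AAA_0e^{-t\AAA_0^2}$. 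Trace-class then follows by a standard factorization: after composition with the semigroup at time $t/2$ and one further Duhamel step, each difference is realized as a product of two operators whose Schwartz kernels lie in $L^2$ by the estimate just stated, hence as a product of two Hilbert--Schmidt operators.

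The principal obstacle is the localization step: transporting the off-diagonal Gaussian decay of the \emph{individual} heat kernels into a simultaneous Gaussian decay of the \emph{difference} kernel in the distances of $x$ and $y$ to $K_0\cup K_1$. This requires careful bookkeeping of the iterated Duhamel remainders and essentially uses both the identification of $\AAA_0$ and $\AAA_1$ on $U$ (so that the commutators can be evaluated with either operator on the relevant support) and the two-sided bounds on $\mathcal{R}$ (to close the Gaussian estimate). The invertibility-at-infinity part of Definition~\ref{D:condition at infinity} is not needed here; it will enter only in the large-time analysis of the relative eta function in later sections.
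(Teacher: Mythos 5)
Your strategy matches the paper's. The paper offers no detailed proof of this theorem: it simply cites Bunke \cite{Bunke92}, notes that the technique is heat-kernel estimates, and observes that the argument written there for generalized Dirac operators extends without change to Dirac-type operators. Your cutoff--parametrix--Duhamel sketch with Gaussian kernel bounds and a final factorization through Hilbert--Schmidt operators is precisely Bunke's strategy, so you are writing out the argument that the paper delegates to a reference.

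One point in your outline should be corrected because it misstates the hypothesis in a way that would actually invalidate the paper's main application. You write that the Gaussian upper bounds require ``bounded $\mathcal{R}$,'' and again in your closing paragraph that the argument ``essentially uses \ldots the two-sided bounds on $\mathcal{R}$ (to close the Gaussian estimate).'' The theorem assumes only a \emph{lower} bound for $\mathcal{R}$ on $U$; on the compact pieces $K_j$ a two-sided bound is automatic since $\mathcal{R}$ is a continuous bundle endomorphism, so globally you have exactly a lower bound. That lower bound is all the Gaussian estimate needs: when $\mathcal{R}\ge -c$, Kato's domination inequality gives $|e^{-t(\n^*\n+\mathcal{R})}s|\le e^{ct}\,e^{-t\Delta_0}|s|$ pointwise, where $\Delta_0$ is the scalar Laplacian, and the scalar heat kernel has Gaussian upper bounds under bounded sectional curvature. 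The distinction is not cosmetic: in the intended application to strongly Callias-type operators (see Remark~\ref{R:traceclass}), the Callias potential forces $\mathcal{R}$ to grow without bound precisely on $U$, so demanding a two-sided bound on $\mathcal{R}$ there would void the theorem for the operators the paper is built around. Your argument should therefore be phrased to use only the lower bound in the Gaussian step; as written, it proves a statement with hypotheses too strong to cover the case of interest.
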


\begin{remark}\label{R:traceclass}
Note that the condition about $\mathcal{R}$ is different from the condition that $\AAA_j$ being invertible at infinity because the lower bound of $\mathcal{R}$ does not need to be positive. If $\AAA_j$ is a strongly Callias-type operator whose Callias potential is $\Psi_j$, i.e., $\AAA_j=A_j+\Psi_j$, then on $U$,
\[
\mathcal{R}\;=\;([A_j,\Psi_j]_+\,+\,\Psi_j^2)\;+\;(A_j^2\,-\,\nabla_j^*\nabla_j),\qquad j=0,1,
\]
where both terms on the right hand side are bundle maps and the lower bound assumption on $\mathcal{R}$ can be guaranteed by the rapid growth of $[A_j,\Psi_j]_++\Psi_j^2$ at infinity.
\end{remark}

This theorem is fundamental for the discussions below. Thus we make the following basic assumption.

\begin{assumption}\label{A:traceclass}
Let $\AAA_0$ and $\AAA_1$ be two self-adjoint Dirac-type operators which coincide and are invertible at infinity. Assume the two triples $(Y_0,E_0,\AAA_0)$ and $(Y_1,E_1,\AAA_1)$ satisfy the hypothesis of Theorem \ref{T:traceclass}.
\end{assumption}

\begin{corollary}\label{C:traceclass}
Let $\KK_j(y,z;t)$ be the kernel of the heat operator $e^{-t\AAA_j^2}$ ($j=0,1$). Under Assumption \ref{A:traceclass}, for any $t>0$,
\begin{align}
\Tr\big(e^{-t\AAA_1^2}-e^{-t\AAA_0^2}\big)&\;=\;\int_{Y_0\cup_U Y_1}\tr\big(\KK_1(y,y;t)-\KK_0(y,y;t)\big)\,dy, \label{E:traceclass-1} \\
\Tr\big(\AAA_1e^{-t\AAA_1^2}- \AAA_0e^{-t\AAA_0^2}\big)&\;=\;\int_{Y_0\cup_U Y_1}\tr\big(\AAA_1\KK_1(y,y;t)-\AAA_0\KK_0(y,y;t)\big)\,dy, \label{E:traceclass-2}
\end{align}
where $\tr$ denotes the pointwise trace and $\AAA_j$ acts with respect to the first spatial component.
\end{corollary}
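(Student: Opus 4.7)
The plan is to combine the trace class statement of Theorem \ref{T:traceclass} with the classical identification of the trace of a trace class operator as the integral of its kernel along the diagonal. First I would observe that for each fixed $t>0$, both heat kernels $\KK_j(y,z;t)$ are smooth on $Y_j\times Y_j$, a standard consequence of parabolic regularity for the semigroup generated by the elliptic operator $\AAA_j^2$; applying $\AAA_j$ in the first spatial variable yields the smooth kernel of $\AAA_j e^{-t\AAA_j^2}$. Consequently the integrands in \eqref{E:traceclass-1} and \eqref{E:traceclass-2} are continuous functions on $Y_0\cup_U Y_1$, and it remains to identify the operator trace with the spatial integral.

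Next I would localize. Exhaust $Y_0\cup_U Y_1$ by relatively compact open sets $\Omega_n\nearrow Y_0\cup_U Y_1$, choose smooth cutoffs $\chi_n$ with $0\le\chi_n\le 1$, $\chi_n\equiv 1$ on $\Omega_n$ and $\supp\chi_n$ compact, and set
\[
T\;:=\;\AAA_1 e^{-t\AAA_1^2}-\AAA_0 e^{-t\AAA_0^2},\qquad T_n\;:=\;\chi_n\, T\,\chi_n.
\]
By Theorem \ref{T:traceclass}, $T$ is trace class, hence so is each $T_n$. Since $T_n$ has a continuous kernel supported in $\supp\chi_n\times\supp\chi_n$, the standard diagonal trace formula for trace class operators with continuous kernel on a relatively compact domain (a consequence of the singular value/spectral decomposition and Mercer-type reasoning) gives
\[
\Tr T_n\;=\;\int_{Y_0\cup_U Y_1}\chi_n(y)^2\,\tr\bigl(\AAA_1\KK_1(y,y;t)-\AAA_0\KK_0(y,y;t)\bigr)\,dy.
\]

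To pass to the limit $n\to\infty$ I would show $T_n\to T$ in trace norm. Writing $T=\sum_k\sigma_k\langle\cdot,e_k\rangle f_k$ via its singular value decomposition, each rank-one summand $\langle\cdot,\chi_n e_k\rangle\chi_n f_k$ converges to $\langle\cdot,e_k\rangle f_k$ in trace norm because multiplication by $\chi_n$ converges strongly to the identity on $\HH$; dominated convergence in $k$ (with summable dominator $2\sigma_k$) then gives $\Vert T-T_n\Vert_1\to 0$, hence $\Tr T_n\to\Tr T$. The right-hand side then converges to the integral in \eqref{E:traceclass-2}, and the limit of positive and negative parts of the self-adjoint integrand forces absolute integrability of the diagonal. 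The same argument without the $\AAA_j$ factors proves \eqref{E:traceclass-1}.

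The principal technical issue I anticipate is justifying the absolute integrability of $\tr\bigl(\AAA_1\KK_1(y,y;t)-\AAA_0\KK_0(y,y;t)\bigr)$ on the non-compact space $Y_0\cup_U Y_1$, which is what underwrites the passage to the limit on the spatial side. For this I would lean on the off-diagonal Gaussian and Duhamel-type estimates of Bunke \cite{Bunke92} already invoked in Theorem \ref{T:traceclass}: since $\AAA_0$ and $\AAA_1$ agree on $U$, the difference $\KK_1(y,z;t)-\KK_0(y,z;t)$ admits a Duhamel representation whose pointwise norm decays rapidly as $y,z$ leave $K_0\cup K_1$, and applying the first-order operator $\AAA_j$ to the kernel preserves this decay via the standard first-derivative heat kernel bounds.
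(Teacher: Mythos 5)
Your proof is essentially correct and follows the standard route that the paper implicitly relies on: the corollary is stated without an explicit proof precisely because it is meant to follow from the trace-class statement of Theorem \ref{T:traceclass} together with the classical identification of the trace of a trace-class operator with sufficiently regular kernel as the integral of the kernel on the diagonal. Your strategy --- exhaustion by cutoffs $\chi_n$, the diagonal trace formula on compactly supported pieces, and trace-norm convergence of $\chi_n T\chi_n\to T$ via the singular value decomposition --- is the natural way to supply the missing details, and your identification of absolute integrability of the diagonal as the key technical point (handled by Bunke's heat kernel estimates underlying Theorem \ref{T:traceclass}) is exactly right.

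One technical caveat deserves mention. You assert the integrands are \emph{continuous} on $Y_0\cup_U Y_1$. Under the extension-by-zero convention for $\KK_j$ (the only one compatible with the trace-class assertion, since extending by the identity on $K_{1-j}$ would make $e^{-t\AAA_j^2}$ non-trace-class), the integrand on the diagonal is $-\tr\KK_0(y,y;t)$ on $K_0$, $\tr\KK_1(y,y;t)$ on $K_1$, and $\tr(\KK_1-\KK_0)(y,y;t)$ on $U$. Each piece is smooth, but there is a genuine jump as $y$ crosses $\partial K_0$ or $\partial K_1$: for instance, a sequence $y_n\in U$ approaching $\partial K_0$ gives $\KK_1(y_n,y_n;t)\not\to 0$, so the one-sided limits differ. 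Because $\partial K_0\cup\partial K_1$ has measure zero this does not affect the trace formula (the Brislawn-type theorem only requires the kernel to be continuous near the diagonal off a null set, and the kernel of $\chi_n T\chi_n$ has the same jumps), but the claim of global continuity should be weakened to smoothness on each of $K_0$, $K_1$, $U$ separately, or equivalently continuity away from a set of measure zero. With this correction, the localization argument and the trace-norm convergence step are valid as written, and the absolute integrability issue is appropriately flagged rather than elided.
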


\begin{remark}\label{R:rel trace indep}
As an immediate consequence of Corollary \ref{C:traceclass}, $\Tr(e^{-t\AAA_1^2}-e^{-t\AAA_0^2})$ as well as $\Tr(\AAA_1e^{-t\AAA_1^2}- \AAA_0e^{-t\AAA_0^2})$ are independent of the decomposition \eqref{E:space decomp}.
\end{remark}

\subsection{The short time asymptotic expansions of the relative heat traces}\label{SS:STexpan}

We now study the integrands on the right side of \eqref{E:traceclass-1} and \eqref{E:traceclass-2} using heat kernel estimates. We first construct parametrices for the heat kernels $\KK_0$ and $\KK_1$, then use them to give short time asymptotic expansion for the relative heat trace.

We first introduce a set of cut-off functions (see Fig. 1). Let $\phi,\psi$ be smooth functions on $Y_0$ such that they are both equal to 1 outside a compact set containing $K_0$ and equal to 0 on a smaller compact set containing $K_0$. Moreover, $\supp(1-\psi)\cap\supp\phi=\emptyset$. Now that $\phi=\psi=0$ on $K_0$, we can replace $K_0$ by $K_1$ and also view them as functions on $Y_1$. We still use $\phi$ and $\psi$ to denote them. For $j=0,1$, let $\gamma_j$ be a compactly supported function on $Y_j$ with $\supp(1-\gamma_j)\cap\supp(1-\phi)=\emptyset$.

\begin{center}
\begin{overpic}[height=3.6cm]{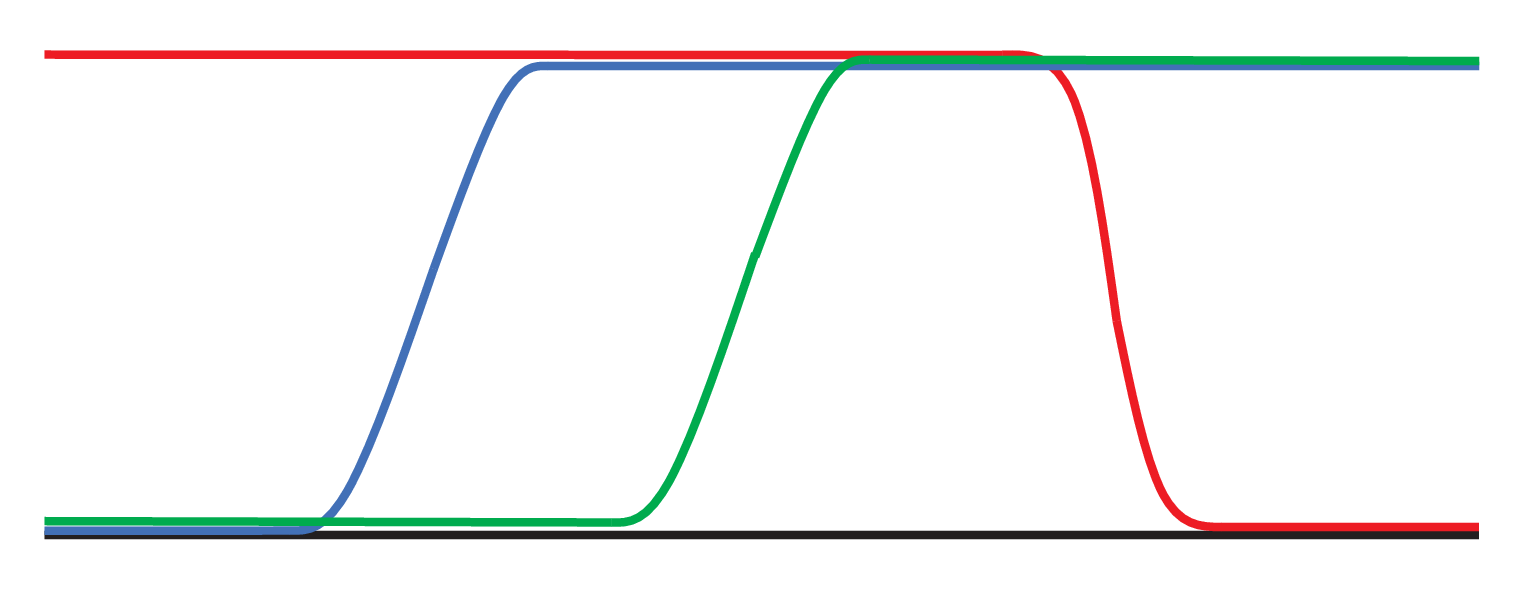}
\put(8,0){$K_j$}
\put(52,0){$U$}
\put(30,20){$\psi$}
\put(50,20){$\phi$}
\put(74,20){$\gamma_j$}
\end{overpic}\\
Figure 1. The cut-off functions.
\end{center}

Let $\tilde{K}_j$ be a compact subset of $Y_j$ containing a neighborhood of $\supp\gamma_j$ with smooth boundary. One can find a closed manifold $\tilde{Y}_j$ which contains $\tilde{K}_j$ and a Dirac-type operator $\tilde{\AAA}_j$ on $\tilde{Y}_j$ such that the restriction of $\tilde{\AAA}_j$ to $\supp\gamma_j$ is $\AAA_j$.\footnote{Basically, one can deform the Riemannian metric, Dirac bundle structure, etc in a small collar neighborhood of $\p\tilde{K}_j$ to make the operator $\AAA_j$ a product form near $\p\tilde{K}_j$, see \cite[Chapter 9]{BoosWoj93book}, \cites{BrCecchini18,BrMaschler19}, and then take the closed double of it.} $\gamma_j$ can also be viewed as a function on $\tilde{Y}_j$ with $\gamma_j=0$ outside $\tilde{K}_j$. Let $\tilKK_j$ be the kernel of the heat operator $e^{-t\tilde{\AAA}_j^2}$. Then one can get a parametrix for the kernel of $e^{-t\AAA_j^2}$ as
\begin{equation}\label{E:parametrix}
\EE_j(y,z;t)\;:=\;\gamma_j(y)\,\tilKK_j(y,z;t)\,(1-\phi(z))\,+\,\psi(y)\,\KK_0(y,z;t)\,\phi(z).
\end{equation}

In what follows, we will fix the choice of the cut-off functions $\phi,\psi,\gamma_j$ and the compact subset $\tilde{K}_j\Subset Y_j$.

\begin{lemma}\label{L:Duhamel}
For $j=0,1$ and $t>0$,
\begin{align*}
&\KK_j(y,z;t)-\EE_j(y,z;t) \\
&\hspace{1.6cm}=\;-\int_0^t\int_{Y_j}\KK_j(y,w;s)\Big(\frac{\p}{\p t}+\AAA_j^2\Big)\EE_j(w,z;t-s)\,dwds, \\
&\AAA_j\KK_j(y,z;t)-\AAA_j\EE_j(y,z;t) \\
&\hspace{1.6cm}=\;-\int_0^t\int_{Y_j}\AAA_j\KK_j(y,w;s)\Big(\frac{\p}{\p t}+\AAA_j^2\Big)\EE_j(w,z;t-s)\,dwds.
\end{align*}
\end{lemma}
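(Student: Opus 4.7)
The plan is a standard Duhamel-type argument comparing the true heat kernel $\KK_j$ and the parametrix $\EE_j$. The key preliminary observation is that $\EE_j(\cdot,z;0) = \delta_z$ as a distribution: indeed, both $\tilKK_j$ and $\KK_0$ reduce to the delta function at $t=0$, so $\EE_j(\cdot,z;0)$ multiplies $\delta_z$ by the pointwise factor $\gamma_j(z)(1-\phi(z)) + \psi(z)\phi(z)$, and this factor equals $1$ everywhere thanks to the support conditions ($\gamma_j \equiv 1$ on $\supp(1-\phi)$ and $\psi \equiv 1$ on $\supp\phi$). Thus $\EE_j$ and $\KK_j$ share the same initial condition, while the error $(\p/\p t + \AAA_j^2)\EE_j$ collects only commutator contributions $[\AAA_j^2,\gamma_j]\tilKK_j(1-\phi)$ and $[\AAA_j^2,\psi]\KK_0 \phi$ (note that $\AAA_j = \tilde\AAA_j$ on $\supp\gamma_j$ and $\AAA_j = \AAA_0$ on $\supp\psi \subset U$). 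These are first-order differential operators in $y$ with compactly supported coefficients.

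For the first identity, I would introduce the auxiliary function
\[
f(s) \;:=\; \int_{Y_j} \KK_j(y,w;s)\,\EE_j(w,z;t-s)\,dw,\qquad 0 \le s \le t,
\]
for fixed $y,z,t$. From the initial-condition remark one reads off $f(0) = \EE_j(y,z;t)$ and $f(t) = \KK_j(y,z;t)$. Differentiating under the integral, using $\p_s\KK_j = -\AAA_j^2\KK_j$, the chain rule in the second time slot of $\EE_j$, and integrating by parts once in $w$ (by self-adjointness of $\AAA_j$), one obtains
\[
\frac{df}{ds}(s) \;=\; -\int_{Y_j} \KK_j(y,w;s)\Big(\frac{\p}{\p t}+\AAA_j^2\Big)\EE_j(w,z;t-s)\,dw.
\]
The first identity is then $\KK_j(y,z;t) - \EE_j(y,z;t) = f(t) - f(0) = \int_0^t f'(s)\,ds$. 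For the second identity I would apply $\AAA_j$ in the $y$-variable to both sides of the first, moving it under the integral using the smoothness and rapid off-diagonal decay of the kernel of $\AAA_j e^{-s\AAA_j^2}$ for $s > 0$.

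The main technical point to verify is the integration by parts on the non-compact $Y_j$. This is not a serious obstacle: the integrand $(\p/\p t + \AAA_j^2)\EE_j(w,z;t-s)$ is compactly supported in $w$ by the cut-off construction, so the $w$-integral is effectively over a compact set. Combined with the Gaussian-type off-diagonal decay of $\KK_j$ guaranteed by the bounded-geometry hypothesis in Assumption \ref{A:traceclass}, all limit exchanges are absolutely justified and all boundary contributions at infinity vanish.
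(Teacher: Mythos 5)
Your argument is correct and follows essentially the same Duhamel-principle route as the paper: both rely on the observation that $\EE_j(\cdot,z;0)=\delta_z$, then telescope via $\int_0^t\frac{d}{ds}\int_{Y_j}\KK_j(y,w;s)\EE_j(w,z;t-s)\,dw\,ds$, using the heat equation for $\KK_j$ and integration by parts in $w$. Your added explicit justification that $\gamma_j(z)(1-\phi(z))+\psi(z)\phi(z)\equiv 1$ and that $(\p_t+\AAA_j^2)\EE_j$ is compactly supported in $w$ (so the integration by parts is legitimate) is a useful spelling-out of details the paper compresses into a citation of Duhamel's principle.
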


\begin{proof}
We only show the first equation. The second one follows from the first. From the construction of $\EE_j$, one has that $\EE_j(y,z;t)$ is the kernel of the identity map when $t\to0$. By Duhamel's principle (cf. \cite[Lemma 22.11]{BoosWoj93book}),
\begin{align*}
&\quad\KK_j(y,z;t)-\EE_j(y,z;t) \\
=&\;\int_{Y_j}\big(\KK_j(y,w;t)\EE_j(w,z;0)-\KK_j(y,w;0)\EE_j(w,z;t)\big)\,dw \\
=&\;\int_{Y_j}\int_0^t\frac{\p}{\p s}\big(\KK_j(y,w;s)\EE_j(w,z;t-s)\big)\,dsdw \\
=&\;\int_0^t\int_{Y_j}\Big[\big(-\AAA_j^2\KK_j(y,w;s)\big)\EE_j(w,z;t-s)-\KK_j(y,w;s)\frac{\p}{\p t}\EE_j(w,z;t-s)\Big]\,dwds \\
=&\;-\int_0^t\int_{Y_j}\KK_j(y,w;s)\Big(\frac{\p}{\p t}+\AAA_j^2\Big)\EE_j(w,z;t-s)\,dwds.
\end{align*}
This completes the proof.
\end{proof}

\begin{lemma}\label{L:STest}
For $j=0,1$, assume the cut-off functions $\phi,\psi,\gamma_j$ and the compact subset $\tilde{K}_j\Subset Y_j$ are fixed. Then there exist constants $\alpha_j,\beta_j>0$ such that as $t\to0$,
\begin{align}
\int_{Y_j}\big|\KK_j(y,y;t)-\EE_j(y,y;t)\big|\,dy&\;\le\;\alpha_j e^{-\beta_j/t}, \label{E:STest-1}\\
\int_{Y_j}\big|\AAA_j\KK_j(y,y;t)-\AAA_j\EE_j(y,y;t)\big|\,dy&\;\le\;\alpha_j e^{-\beta_j/t}.\label{E:STest-2}
\end{align}
\end{lemma}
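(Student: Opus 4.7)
The plan is to use Lemma \ref{L:Duhamel} to express the difference $\KK_j(y,y;t)-\EE_j(y,y;t)$ as a space-time convolution of $\KK_j$ with the error term $R_j(w,z;\tau):=(\partial_\tau+\AAA_{j,w}^2)\EE_j(w,z;\tau)$, and then to exploit the crucial fact that $R_j$ is supported strictly off the diagonal; standard Gaussian off-diagonal heat kernel bounds will then convert this separation into the desired factor $e^{-\beta_j/t}$.

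First I would compute $R_j$ directly. Since $\tilde{\AAA}_j=\AAA_j$ on $\supp\gamma_j$, the contribution $\gamma_j(\partial_\tau+\AAA_{j,w}^2)\tilKK_j$ vanishes identically ($\tilKK_j$ solves the heat equation for $\tilde{\AAA}_j^2$); and since $\supp\psi\subset U$ where $\AAA_j=\AAA_0$, we likewise have $\psi(\partial_\tau+\AAA_{j,w}^2)\KK_0=0$. Only the commutator pieces survive:
\[
R_j(w,z;\tau)=(1-\phi(z))\,[\AAA_{j,w}^2,\gamma_j]\tilKK_j(w,z;\tau)\;+\;\phi(z)\,[\AAA_{j,w}^2,\psi]\KK_0(w,z;\tau).
\]
Each commutator is a first-order operator in $w$ supported in $\supp d\gamma_j$ or $\supp d\psi$. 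The cut-off conditions $\supp(1-\gamma_j)\cap\supp(1-\phi)=\emptyset$ and $\supp(1-\psi)\cap\supp\phi=\emptyset$ force $\gamma_j\equiv 1$ on $\supp(1-\phi)$ and $\psi\equiv 1$ on $\supp\phi$, so that for each term the $w$- and $z$-supports are disjoint. As $\supp d\gamma_j$ and $\supp d\psi$ are both compact, we obtain some $\delta>0$ with $d(w,z)\ge\delta$ on $\supp R_j$.

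Next, under the bounded-geometry hypothesis inherited from Theorem \ref{T:traceclass}, I would invoke the standard Gaussian off-diagonal estimates $|\KK_j(y,w;s)|\le C s^{-n/2}e^{-d(y,w)^2/(cs)}$ for $\KK_j,\tilKK_j,\KK_0$, with an additional factor of $\tau^{-1/2}$ for the first-order pieces in $R_j$ and for $\AAA_j\KK_j$. Plugging Lemma \ref{L:Duhamel} into $\int_{Y_j}|\KK_j(y,y;t)-\EE_j(y,y;t)|\,dy$ and using the split $e^{-d^2/(c\tau)}\le e^{-\delta^2/(2c\tau)}e^{-d^2/(2c\tau)}$ on $\supp R_j$, the off-diagonal smallness factors out as $e^{-\delta^2/(2cs)-\delta^2/(2c(t-s))}\le e^{-2\delta^2/(ct)}$ (since $\tfrac{1}{s}+\tfrac{1}{t-s}\ge\tfrac{4}{t}$). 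This exponential dominates the polynomial factors $s^{-n/2}$, $(t-s)^{-(n+1)/2}$ that arise after time integration on $[0,t]$, yielding \eqref{E:STest-1} for any $\beta_j<2\delta^2/c$. Estimate \eqref{E:STest-2} is identical after inserting $\AAA_j$ into the Duhamel formula, losing only an extra $s^{-1/2}$ which is again absorbed by the same exponential factor.

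The main obstacle I anticipate lies in the second contribution to $R_j$, where the $z=y$ support $\supp\phi$ is non-compact so the $y$-integral ranges over an unbounded region. Here I would use the residual Gaussian $e^{-d(y,w)^2/(2c\tau)}$ (i.e., the half not consumed in extracting $e^{-\delta^2/(2c\tau)}$) together with bounded geometry to obtain $\int_{Y_j}e^{-d(y,w)^2/(2c\tau)}\,dy\le C\tau^{n/2}$ uniformly in $w$; combined with compactness of the $w$-support $\supp d\psi$, this renders the iterated integral finite and preserves the $e^{-\beta_j/t}$ decay after the final $s$-integration.
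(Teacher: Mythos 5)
Your proposal is structurally the same as the paper's proof: Duhamel's formula from Lemma~\ref{L:Duhamel}, identification of the parametrix error $(\partial_t+\AAA_j^2)\EE_j$ with the commutator terms $[\AAA_j^2,\gamma_j]\tilKK_j(1-\phi)$ and $[\AAA_j^2,\psi]\KK_0\,\phi$, and then exploitation of the positive distance between $\supp d\gamma_j$ (resp.\ $\supp d\psi$) and $\supp(1-\phi)$ (resp.\ $\supp\phi$). Your reasoning for why only the commutator pieces survive is correct, and your handling of the non-compact $y$-integral via the residual Gaussian and the compact $w$-support is the right idea.

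Where you diverge from the paper is in how the off-diagonal smallness is extracted. You invoke pointwise Gaussian heat-kernel bounds for $\KK_j$, $\AAA_j\KK_j$ and $\AAA_j^2\KK_j$ and then integrate, whereas the paper first decouples the $(y,w)$-integrals by Cauchy--Schwarz, rewrites $\AAA_j^k\KK_j(\cdot,w;s)=\AAA_j^k e^{-s\AAA_j^2}\delta_w$, and uses the Fourier representation $e^{-s\AAA_j^2}=\tfrac{1}{\sqrt{4\pi s}}\int e^{-\xi^2/4s}e^{i\xi\AAA_j}\,d\xi$ together with finite propagation speed of $e^{i\xi\AAA_j}$ and the smoothing $\AAA_j^ke^{-s\AAA_j^2/2}\colon H^{-p}_{\AAA_j}\to L^2$. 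Both routes convert the support separation into $e^{-c\delta^2/t}$, and both are standard. The advantage of the paper's approach is that it sidesteps establishing pointwise Gaussian upper bounds with one and two $\AAA_j$-derivatives for the Dirac-type heat kernel on a complete manifold; under only the bounded-curvature and Weitzenb\"ock hypotheses of Theorem~\ref{T:traceclass}, those derivative Gaussian estimates are true but not entirely trivial, and your write-up should either cite a suitable source for them or supply the comparison/domination argument. With that reference or derivation in place, your approach closes the argument exactly as you describe: the factor $e^{-\delta^2/(2c\,s)-\delta^2/(2c(t-s))}\le e^{-2\delta^2/(ct)}$ absorbs all polynomial prefactors $s^{-a}(t-s)^{-b}$ arising from the on-diagonal scaling and the derivative losses.
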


\begin{proof}
We show \eqref{E:STest-2}. \eqref{E:STest-1} is similar. Note that the integrand has been computed in Lemma \ref{L:Duhamel}. By the definition of $\EE_j$ \eqref{E:parametrix}, one can write
\[
\AAA_j\KK_j(y,w;s)\Big(\frac{\p}{\p t}+\AAA_j^2\Big)\EE_j(w,z;t-s)
\]
as the sum of
\begin{equation}\label{E:kernel-parametrix-1}
\AAA_j\KK_j(y,w;s)\Big(\frac{\p}{\p t}+\AAA_j^2\Big)\big[\gamma_j(w)\,\tilKK_j(w,z;t-s)\,(1-\phi(z))\big]
\end{equation}
and
\begin{equation}\label{E:kernel-parametrix-2}
\AAA_j\KK_j(y,w;s)\Big(\frac{\p}{\p t}+\AAA_j^2\Big)\big[\psi(w)\,\KK_0(w,z;t-s)\,\phi(z)\big].
\end{equation}
Direct computation shows that
\begin{align*}
\eqref{E:kernel-parametrix-1}\;=\;&\AAA_j\KK_j(y,w;s)\Big[\AAA_j\big(\rmc(d\gamma_j)\tilKK_j(w,z;t-s)\big)+\rmc(d\gamma_j)\AAA_j\tilKK_j(w,z;t-s)\Big](1-\phi(z)) \\
=\;&\AAA_j^2\KK_j(y,w;s)\rmc(d\gamma_j)\tilKK_j(w,z;t-s)(1-\phi(z)) \\
&+\,\AAA_j\KK_j(y,w;s)\rmc(d\gamma_j)\AAA_j\tilKK_j(w,z;t-s)(1-\phi(z)),
\end{align*}
where $\rmc(\cdot)$ denotes the Clifford multiplication.
Similarly,
\begin{align*}
\eqref{E:kernel-parametrix-2}\;=\;&\AAA_j^2\KK_j(y,w;s)\rmc(d\psi)\KK_0(w,z;t-s)\phi(z) \\
&+\,\AAA_j\KK_j(y,w;s)\rmc(d\psi)\AAA_j\KK_0(w,z;t-s)\phi(z).
\end{align*}
Now we use Lemma \ref{L:Duhamel} to reduce the desired estimate to the following four parts.
\begin{align*}
I_1&\,:=\,\int_{Y_j}\int_0^t\int_{Y_j}\big|\AAA_j^2\KK_j(y,w;s)\rmc(d\gamma_j)\tilKK_j(w,y;t-s)(1-\phi(y))\big|\,dwdsdy, \\
I_2&\,:=\,\int_{Y_j}\int_0^t\int_{Y_j}\big|\AAA_j\KK_j(y,w;s)\rmc(d\gamma_j)\AAA_j\tilKK_j(w,y;t-s)(1-\phi(y))\big|\,dwdsdy, \\
I_3&\,:=\,\int_{Y_j}\int_0^t\int_{Y_j}\big|\AAA_j^2\KK_j(y,w;s)\rmc(d\psi)\KK_0(w,y;t-s)\phi(y)\big|\,dwdsdy, \\
I_4&\,:=\,\int_{Y_j}\int_0^t\int_{Y_j}\big|\AAA_j\KK_j(y,w;s)\rmc(d\psi)\AAA_j\KK_0(w,y;t-s)\phi(y)\big|\,dwdsdy.
\end{align*}

We show the estimate for $I_3$. For convenience, we set $t\le1$. Let $K_{d\psi}$ (with respect to $w$) and $K_\phi$ (with respect to $y$) be the supports of $d\psi$ and $\phi$, respectively. Then
\begin{align*}
I_3&\,\le\,C_1\int_0^t\int_{K_{d\psi}}\int_{K_\phi}\big|\AAA_j^2\KK_j(y,w;s)\big|\cdot\big|\KK_0(w,y;t-s)\big|\,dydwds \\
&\,\le\,C_1\int_0^t\int_{K_{d\psi}}\bigg(\int_{K_\phi}\big|\AAA_j^2\KK_j(y,w;s)\big|^2\,dy\bigg)^{1/2} \\
&\hspace{3cm}\cdot\bigg(\int_{K_\phi}\big|\KK_0(w,y;t-s)\big|^2\,dy\bigg)^{1/2}\,dwds \\
&\,\le\,C_1\int_0^t\bigg(\int_{K_{d\psi}}\int_{K_\phi}\big|\AAA_j^2\KK_j(y,w;s)\big|^2\,dydw\bigg)^{1/2} \\
&\hspace{2.3cm}\cdot\bigg(\int_{K_{d\psi}}\int_{K_\phi}\big|\KK_0(w,y;t-s)\big|^2\,dydw\bigg)^{1/2}\,ds \\
\end{align*}
Let $d={\rm dist}(K_{d\psi},K_\phi)$. By the properties of our cut-off functions, $d>0$. One can use Fourier transform to write
\begin{align*}
\AAA_j^2\KK_j(y,w;s)\;&=\;\big[\AAA_j^2e^{-s\AAA_j^2}\delta_w\big]\,(y) \\
&=\;\Big[\frac{1}{\sqrt{4\pi s}}\int_{-\infty}^\infty e^{-\xi^2/4s}\AAA_j^2e^{i\xi\AAA_j}\delta_w\,d\xi\Big]\,(y),\qquad y\in K_\phi,\;w\in K_{d\psi},
\end{align*}
where $\delta_w$ is the Dirac delta distribution at $w$. By finite propagation speed \cites{Chernoff73,CheegerGromovTaylor82}, $e^{i\xi\AAA_j}\delta_w$ is supported in a $|\xi|$-neighborhood of $w$ which indicates
\[
\AAA_j^2\KK_j(y,w;s)\;=\;\Big[\frac{1}{\sqrt{4\pi s}}\int_{\RR\setminus(-d,d)}e^{-\xi^2/4s}\AAA_j^2e^{i\xi\AAA_j}\delta_w\,d\xi\Big]\,(y).
\]
Therefore as $t\to0$ (which means $s\to0$),
\begin{align*}
\big|\AAA_j^2\KK_j(y,w;s)\big|\;&\le\;C_2\,e^{-d^2/8s}\,\bigg|\Big[\frac{1}{\sqrt{8\pi s}}\int_{\RR\setminus(-d,d)}e^{-\xi^2/8s}\AAA_j^2e^{i\xi\AAA_j}\delta_w\,d\xi\Big]\,(y)\bigg| \\
&=\;C_2\,e^{-d^2/16s}\,\Big|\big[e^{-d^2/16s}\AAA_j^2e^{-s\AAA_j^2/2}\delta_w\big]\,(y)\Big|.
\end{align*}
For a fixed integer $p>n/2$, by the local Sobolev embedding theorem, $\delta_w\in H_{\AAA_j}^{-p}(Y_j,E_j)$. Note that the smoothing operator $e^{-d^2/16s}\AAA_j^2e^{-s\AAA_j^2/2}$ is uniformly bounded from $H_{\AAA_j}^{-p}(Y_j,E_j)$ to $L^2(Y_j,E_j)$ for $0<s\le1$. Hence
\begin{align*}
\int_{K_\phi}\big|\AAA_j^2\KK_j(y,w;s)\big|^2\,dy\;&\le\;C_3\,e^{-d^2/8s}\,\Big\Vert\big[e^{-d^2/16s}\AAA_j^2e^{-s\AAA_j^2/2}\delta_w\big]\,(y)\Big\Vert_{L^2}^2 \\
&\le\;C_4\,e^{-d^2/8s}\,\Vert\delta_w(y)\Vert_{H_{\AAA_j}^{-p}}^2
\;\le\;C_5(w)\,e^{-d^2/8s},
\end{align*}
where $C_5(w)$ depends only on a local Sobolev embedding constant in $w$, thus is bounded on the compact set $K_{d\psi}$. So we obtain
\[
\int_{K_{d\psi}}\int_{K_\phi}\big|\AAA_j^2\KK_j(y,w;s)\big|^2\,dydw\;\le\;C_6\,e^{-d^2/8s}.
\]
By a similar reason,
\[
\int_{K_{d\psi}}\int_{K_\phi}\big|\KK_0(w,y;t-s)\big|^2\,dydw\;\le\;C_7\,e^{-d^2/8(t-s)}.
\]
Hence
\[
I_3\;\le\;C_8\int_0^te^{-d^2t/8s(t-s)}\,ds\;\le\;C_8\int_0^te^{-d^2/8t}\,ds\;\le\;C_9\,e^{-C_{10}/t},\qquad\mbox{as }t\to 0.
\]

The estimates for the other three terms are totally analogous. Combining them immediately yields \eqref{E:STest-2}.
\end{proof}

Lemma \ref{L:STest} tells us that the difference between the heat kernel and the parametrix is negligible when considering the (global) short time asymptotic expansions. Recall that we have defined $\tilKK_j$ to be the kernel of $e^{-t\tilde{\AAA}_j^2}$ on the closed $n$-manifold $\tilde{Y}_j$. We use it as an example to remind the short time asymptotic expansion of the heat kernel on a closed manifold.

\begin{proposition}[\cites{Gilkey95book,BransonGilkey92}]\label{P:STexpan-closed}
For $j=0,1$, as $t\to0$, we have the following asymptotic expansions
\begin{align*}
\Tr(e^{-t\tilA_j^2})\;=\;\int_{\tilde{Y}_j}\tr(\tilKK_j(y,y;t))\,dy&\;\sim\;\sum_{k=0}^\infty a_{j,k}\,t^{(k-n)/2}, \\
\Tr(\tilA_je^{-t\tilA_j^2})\;=\;\int_{\tilde{Y}_j}\tr(\tilA_j\tilKK_j(y,y;t))\,dy&\;\sim\;\sum_{k=0}^\infty b_{j,k}\,t^{(k-n-1)/2}.
\end{align*}
The coefficients $a_{j,k}$ and $b_{j,k}$ are given by
\[
a_{j,k}\;=\;\int_{\tilde{Y}_j}a_{j,k}(y)\,dy,\quad\qquad
b_{j,k}\;=\;\int_{\tilde{Y}_j}b_{j,k}(y)\,dy,
\]
where the densities $a_{j,k}(y)$ and $b_{j,k}(y)$ are local invariants of the jets of the symbol of $\tilA_j$. Moreover, $a_{j,k}(y)=0$ if $k$ is odd; $b_{j,k}(y)=0$ if $k$ is even.
\end{proposition}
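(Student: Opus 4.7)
The plan is to follow the classical local parametrix construction for the heat kernel of a Laplace-type operator on a closed manifold, and to read off the parity statements from the explicit form of the expansion on the diagonal.

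First, I would fix $y\in\tilde{Y}_j$ and construct a local parametrix for the heat kernel of $\tilde{\AAA}_j^2$ in normal coordinates centered at $y$. The standard transport-equation recursion (see e.g.\ \cite{Gilkey95book}) produces the formal expansion
\[
\tilKK_j(y,z;t)\;\sim\;(4\pi t)^{-n/2}\,e^{-r(y,z)^2/4t}\,\sum_{\ell\ge 0}u_\ell(y,z)\,t^\ell,
\]
where $r$ denotes Riemannian distance and each $u_\ell(y,z)$ is a local invariant polynomial in the jets of the symbol of $\tilde{\AAA}_j^2$. Restricting to the diagonal $z=y$, taking pointwise trace, and integrating over $\tilde{Y}_j$ would give the first asymptotic; the local density is $(4\pi)^{-n/2}\tr u_\ell(y,y)\,t^{\ell-n/2}$, so only integer shifts of $t^{-n/2}$ can appear. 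Setting $a_{j,k}(y)$ equal to this density when $k=2\ell$ and zero when $k$ is odd yields (i).

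For the second expansion, I would apply $\tilde{\AAA}_j$ (in the first variable) to the parametrix and restrict to the diagonal. Since $\tilde{\AAA}_j$ is first-order, this procedure shifts the $t$-expansion by a half-integer: the Clifford symbol of $\tilde{\AAA}_j$ acts against the Gaussian factor and contributes an effective scaling of order $r/t\sim t^{-1/2}$ on the diagonal, while derivatives of the $u_\ell$ preserve the scaling in $t$. Taking pointwise trace and integrating then yields only powers $t^{(2\ell+1-n-1)/2}$, so $b_{j,k}(y)=0$ for even $k$, giving (ii).

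For the compatible-Dirac case with $n$ even, property (iii) follows from the chirality operator $\Gamma$, a parallel bundle endomorphism satisfying $\Gamma^2=\id$ and $\Gamma\tilde{\AAA}_j=-\tilde{\AAA}_j\Gamma$. Since $\Gamma$ commutes with $e^{-t\tilde{\AAA}_j^2}$, conjugating $\tilde{\AAA}_j\tilKK_j(y,y;t)$ by $\Gamma_y$ flips its sign; taking pointwise trace gives $\tr(\tilde{\AAA}_j\tilKK_j(y,y;t))=0$, so all $b_{j,k}(y)$ vanish. Property (iv) is the subtlest point: in the odd-dimensional case it asserts a pointwise cancellation in each coefficient density $b_{j,k}(y)$ for $k\le n$. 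The mechanism is the Clifford-degree counting at the heart of Getzler's rescaling proof of the local index theorem, namely that the Clifford monomials produced in the heat expansion of $\tilde{\AAA}_j\tilKK_j(y,y;t)$ up to order $k\le n$ have insufficient Clifford degree to yield a nonvanishing trace on the spinor fibre. I expect this last step to be the main obstacle; the detailed bookkeeping of Clifford parities through the transport recursion is carried out in \cite{Gilkey95book, BransonGilkey92}, which I would cite rather than reproduce.
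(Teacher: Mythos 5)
The paper does not prove this proposition; it cites Gilkey's book and Branson--Gilkey, which is exactly the route your sketch takes, so your overall approach matches. One correction on step (ii): you attribute the structure of the expansion to the Clifford symbol acting against the Gaussian factor with an ``effective scaling $r/t\sim t^{-1/2}$,'' but on the diagonal this contribution vanishes identically, since $\partial_y r(y,z)^2$ is zero at $y=z$. The surviving terms come from $\tilde{\AAA}_j$ acting on the transport coefficients $u_\ell$ together with the zeroth-order part of $\tilde{\AAA}_j$, and they produce powers $t^{\ell-n/2}$, the same integer shifts of $t^{-n/2}$ as in the heat-trace expansion. Writing this as $\sum_k b_{j,k}\,t^{(k-n-1)/2}$ with $b_{j,k}=0$ for even $k$ is a bookkeeping choice, and (ii) simply records that the leading singularity $t^{-(n+1)/2}$ your scaling heuristic would naively predict is absent. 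So your conclusion is correct, but the mechanism you describe is reversed. Your arguments for (i) and (iii), and your decision to defer (iv) to the Clifford-degree counting in the cited references, are all appropriate and consistent with what those sources do.
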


\begin{remark}\label{R:STexpan-closed}
The expansion of $\Tr(\tilA_je^{-t\tilA_j^2})$ can be used to analyse the analytic continuation of $\eta(s;\tilA_j)$. In particular, one sees that $\eta(s;\tilA_j)$ has a simple pole at $s=0$ with residue given by $2b_{j,n}/\sqrt{\pi}$. Since we already know that $s=0$ is a regular point, we get $b_{j,n}=0$. However, generically $b_{j,n}(y)\ne0$ (cf. \cite{BransonGilkey92}).
\end{remark}

We now use $\EE_j$ to replace $\KK_j$ to find the asymptotic expansion of $\Tr(e^{-t\AAA_1^2}-e^{-t\AAA_0^2})=\int_{Y_0\cup_U Y_1}\tr(\KK_1(y,y;t)-\KK_0(y,y;t))dy$. By the construction \eqref{E:parametrix} of $\EE_j$,
\[
\EE_1(y,y;t)\,-\,\EE_0(y,y;t)\;=\;(\tilKK_1(y,y;t)-\tilKK_0(y,y;t))\,(1-\phi(y)).
\]
In other words, it is entirely determined by the expansions of $\tilKK_j(y,y;t)$. This is also true for the asymptotic expansion of $\Tr(\AAA_1e^{-t\AAA_1^2}-\AAA_0e^{-t\AAA_0^2})$. It then follows from Proposition \ref{P:STexpan-closed} that

\begin{proposition}\label{P:STexpan-relative}
Consider $\AAA_j$ which satisfies Assumption \ref{A:traceclass} to be an operator on $Y_0\cup_U Y_1$. As $t\to0$, we have the following asymptotic expansions
\begin{align*}
\Tr(e^{-t\AAA_1^2}-e^{-t\AAA_0^2})&\;\sim\;\sum_{k=0}^\infty a_k\,t^{(k-n)/2}, \\
\Tr(\AAA_1e^{-t\AAA_1^2}-\AAA_0e^{-t\AAA_0^2})&\;\sim\;\sum_{k=0}^\infty b_k\,t^{(k-n-1)/2},
\end{align*}
where
\begin{align*}
a_k&\;=\;\int_{Y_0\cup_U Y_1}(a_{1,k}(y)-a_{0,k}(y))\,(1-\phi(y))\,dy \\
&\;=\;\int_{\tilde{K}_1}a_{1,k}(y)\,(1-\phi(y))\,dy\,-\,\int_{\tilde{K}_0}a_{0,k}(y)\,(1-\phi(y))\,dy, \\
b_k&\;=\;\int_{Y_0\cup_U Y_1}(b_{1,k}(y)-b_{0,k}(y))\,(1-\phi(y))\,dy \\
&\;=\;\int_{\tilde{K}_1}b_{1,k}(y)\,(1-\phi(y))\,dy\,-\,\int_{\tilde{K}_0}b_{0,k}(y)\,(1-\phi(y))\,dy, \\
\end{align*}
and $a_{j,k}(y)$ and $b_{j,k}(y)$ are as in Proposition \ref{P:STexpan-closed}.
In particular, $a_k=0$ if $k$ is odd; $b_k=0$ if $k$ is even. Also $b_n=0$.
\end{proposition}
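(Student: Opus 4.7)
My proof plan proceeds in three steps. First, Corollary \ref{C:traceclass} converts each trace into an integral of the pointwise trace of the diagonal of the heat kernel. Lemma \ref{L:STest} then allows me to replace $\KK_j(y,y;t)$ (resp.\ $\AAA_j \KK_j(y,y;t)$) by the diagonal $\EE_j(y,y;t)$ (resp.\ $\AAA_j \EE_j(y,y;t)$) at the cost of an error bounded by $\alpha_j e^{-\beta_j/t}$. Since $e^{-\beta/t} = O(t^N)$ as $t \to 0^+$ for every $N$, this error is absorbed into any asymptotic series with finitely many negative powers of $t^{1/2}$, so it suffices to analyze the expansion of the integrals of $\tr(\EE_1(y,y;t) - \EE_0(y,y;t))$ and $\tr(\AAA_1\EE_1(y,y;t) - \AAA_0\EE_0(y,y;t))$.

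Second, I would show that these diagonal differences reduce to a compactly-supported expression purely in terms of the closed-manifold kernels $\tilKK_j$. From \eqref{E:parametrix}, the piece $\psi(y)\KK_0(y,y;t)\phi(y)$ is identical for $j=0,1$ and cancels in the difference; and since $\gamma_j \equiv 1$ on $\supp(1-\phi)$ (by the support condition $\supp(1-\gamma_j)\cap\supp(1-\phi)=\emptyset$), the remaining piece simplifies to $(\tilKK_1 - \tilKK_0)(y,y;t)(1-\phi(y))$. For the $\AAA_j$-twisted version, I would expand with the Leibniz rule $\AAA_j(fs) = f\AAA_j s + \rmc(df)s$ on each summand. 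The commutator term $\rmc(d\gamma_j)\tilKK_j(1-\phi)$ vanishes pointwise because $d\gamma_j = 0$ on an open neighborhood of $\supp(1-\phi)$; and both the commutator $\rmc(d\psi)\KK_0 \phi$ and the bulk term $\psi\AAA_j\KK_0\phi$ are $j$-independent (using $\AAA_0 = \AAA_1$ on $U \supset \supp\psi\cap\supp\phi$) and so cancel in the difference. On $\supp\gamma_j$ the operator $\AAA_j$ coincides with $\tilde\AAA_j$, so one is left with $(\tilde\AAA_1\tilKK_1 - \tilde\AAA_0\tilKK_0)(y,y;t)(1-\phi(y))$.

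Third, I would invoke Proposition \ref{P:STexpan-closed}, which provides locally uniform diagonal expansions $\tr\tilKK_j(y,y;t) \sim \sum_k a_{j,k}(y)t^{(k-n)/2}$ and $\tr(\tilde\AAA_j\tilKK_j(y,y;t)) \sim \sum_k b_{j,k}(y)t^{(k-n-1)/2}$. Multiplying by the compactly supported factor $1-\phi$ and integrating termwise (legitimate because the remainders are uniform on the compact set $\supp(1-\phi)$, and I can enlarge $\tilde K_j$ so that $\supp(1-\phi)\subset\tilde K_j$) produces the claimed global expansions with coefficients
\[
a_k = \int (a_{1,k}(y)-a_{0,k}(y))(1-\phi(y))\,dy, \qquad b_k = \int (b_{1,k}(y)-b_{0,k}(y))(1-\phi(y))\,dy,
\]
and the two integral representations of $a_k, b_k$ match via $\supp(1-\phi)\subset\tilde K_j$. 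Properties (i)--(iv) are then immediate from the pointwise vanishing statements for $a_{j,k}(y), b_{j,k}(y)$ in Proposition \ref{P:STexpan-closed}.

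The main subtlety I expect is the bookkeeping of commutator terms in the $\AAA_j$-twisted case, namely verifying that both the $d\gamma_j$ contribution from the first summand of $\EE_j$ and the $d\psi$ together with $\AAA_j\KK_0$ contributions from the second summand either vanish pointwise on the relevant support or are manifestly independent of $j$ (using crucially that $\AAA_0 = \AAA_1$ on $U$), so that they drop out of the difference. Once this cancellation is secured, the rest reduces cleanly to the standard closed-manifold heat expansion.
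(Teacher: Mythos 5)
Your proof follows the same route as the paper: replace the heat kernels by the parametrices $\EE_j$ using Lemma \ref{L:STest} (the error being $O(e^{-\beta/t})$, hence negligible), observe that on the diagonal $\EE_1-\EE_0$ and $\AAA_1\EE_1-\AAA_0\EE_0$ reduce via the cut-off support conditions to $(\tilKK_1-\tilKK_0)(1-\phi)$ and $(\tilde\AAA_1\tilKK_1-\tilde\AAA_0\tilKK_0)(1-\phi)$ respectively, and then integrate the closed-manifold expansions of Proposition \ref{P:STexpan-closed}. The only difference is presentational: the paper states the cancellation tersely and asserts "this is also true" for the $\AAA_j$-twisted trace, while you spell out the Leibniz-rule bookkeeping that shows the $\rmc(d\gamma_j)$ term vanishes on $\supp(1-\phi)$ and the $\psi$-, $d\psi$-, and $\AAA_j\KK_0$-terms are $j$-independent; this is exactly the verification the paper is implicitly relying on.
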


\begin{proof}
We only need to show $b_n=0$, the rest is clear. By the selection of $\tilde{K}_0$ and $\tilde{K}_1$, one  can enclose $\tilde{K}_0$ and $\tilde{K}_1$ by a common compact set $K'$ and get two closed smooth manifolds
\[
L_0\;=\;\tilde{K}_0\cup K',\qquad L_1\;=\;\tilde{K}_1\cup K'.
\]
The operators $\AAA_j$ ($j=0,1$) can then be extended to Dirac-type operators $\hat{A}_j$ on $L_j$. As in the beginning of this subsection, we choose smooth cut-off functions $\hat{\phi},\hat{\psi}$ and $\hat{\gamma}_j$ ($j=0,1$) on $L_0$ and $L_1$ such that they are equal to $\phi,\psi$ and $\gamma_j$, respectively on $\tilde{K}_j$. Similar to \eqref{E:parametrix} we construct a parametrix $\hat{\EE}_j$ for the kernel $\hat{\KK}_j$ of $e^{-t\hat{\AAA}_j^2}$ such that $\hat{\EE}_j=\tilde{\KK}_j$ on $\tilde{K}_j$, where $\tilde{\KK}_j$ is as in \eqref{E:parametrix}. Then Lemma \ref{L:STest} holds obviously with $\KK_j$ and $\EE_j$ replaced by $\hat{\KK}_j$ and $\hat{\EE}_j$, respectively.

From the above discussion, it can be deduced that $\Tr(e^{-t\AAA_1^2}-e^{-t\AAA_0^2})$ and $\Tr(e^{-t\hat{\AAA}_1^2}-e^{-t\hat{\AAA}_0^2})$ (resp. $\Tr(\AAA_1e^{-t\AAA_1^2}-\AAA_0e^{-t\AAA_0^2})$ and $\Tr(\hat{\AAA}_1e^{-t\hat{\AAA}_1^2}-\hat{\AAA}_0e^{-t\hat{\AAA}_0^2})$) have the same asymptotic expansions as $t\to0$. Note that $\hat{\AAA}_j$ is defined on a closed manifold. Let
\[
\Tr(\hat{\AAA}_je^{-t\hat{\AAA}_j^2})\;\sim\;\sum_{k=0}^\infty\hat{b}_{j,k}\,t^{(k-n-1)/2},
\]
be the asymptotic expansion as $t\to0$. By Remark \ref{R:STexpan-closed}, $\hat{b}_{j,n}=0$. Therefore $b_n=\hat{b}_{1,n}-\hat{b}_{0,n}=0$.
\end{proof}

\subsection{The large time asymptotic behavior of the relative heat traces}\label{SS:LTbehav}

In this subsection we explore the behavior of $\Tr(e^{-t\AAA_1^2}-e^{-t\AAA_0^2})$ and $\Tr(\AAA_1e^{-t\AAA_1^2}-\AAA_0e^{-t\AAA_0^2})$ as $t\to\infty$ by the method of spectral shift function.

We first recall the basic facts about spectral shift function in our setting. (cf. \cite[Section 2]{Muller98}.)

\begin{lemma}\label{L:sp shift}
Let $\AAA_0$ and $\AAA_1$ be two self-adjoint Dirac-type operators which coincide and are invertible at infinity, and satisfy Assumption \ref{A:traceclass}. Suppose $(a,b)\subset\RR$ is an interval which does not intersect the spectra of $\AAA_0$ and $\AAA_1$. Then there exists a unique real valued locally integrable function $\sigma(\lambda)=\sigma(\lambda;\AAA_0,\AAA_1)$ on $\RR$, called the \emph{spectral shift function} of $\AAA_0,\AAA_1$, such that
\begin{enumerate}
\item $\sigma(\lambda)=0$ for $\lambda\in(a,b)$;
\item for all $\varphi\in C_0^\infty(\RR)$, $\varphi(\AAA_1)-\varphi(\AAA_0)$ is a trace class operator and
\begin{equation}\label{E:sp shift}
\Tr\big(\varphi(\AAA_1)-\varphi(\AAA_0)\big)\;=\;\int_\RR\varphi'(\lambda)\,\sigma(\lambda)\,d\lambda;
\end{equation}
\item
\begin{align}
\Tr\big(e^{-t\AAA_1^2}-e^{-t\AAA_0^2}\big)\; & =\;\int_\RR\frac{d}{d\lambda}(e^{-t\lambda^2})\,\sigma(\lambda)\,d\lambda, \label{E:sp shift-heat-1}\\
\Tr\big(\AAA_1e^{-t\AAA_1^2}-\AAA_0e^{-t\AAA_0^2}\big)\; & =\;\int_\RR\frac{d}{d\lambda}(\lambda e^{-t\lambda^2})\,\sigma(\lambda)\,d\lambda. \label{E:sp shift-heat-2}
\end{align}
\end{enumerate}
\end{lemma}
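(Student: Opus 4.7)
The plan is to reduce the lemma to the abstract Birman--Krein spectral-shift theory, the only new analytic input being Theorem~\ref{T:traceclass}. I would follow M\"uller's strategy \cite{Muller98} adapted to the present Dirac-type setting. The first step is to upgrade the trace-class property of heat differences to a trace-class property of resolvent-power differences: the Laplace representation
\[
  (\AAA_j^2+1)^{-N} \;=\; \frac{1}{\Gamma(N)}\int_0^\infty t^{N-1}e^{-t}\,e^{-t\AAA_j^2}\,dt
\]
gives
\[
  (\AAA_1^2+1)^{-N}-(\AAA_0^2+1)^{-N} \;=\; \frac{1}{\Gamma(N)}\int_0^\infty t^{N-1}e^{-t}\bigl(e^{-t\AAA_1^2}-e^{-t\AAA_0^2}\bigr)\,dt,
\]
and for $N>n/2+1$ this converges in the trace norm: the integrand is trace class by Theorem~\ref{T:traceclass}, its trace norm is $O(t^{-n/2})$ as $t\to 0$ by Proposition~\ref{P:STexpan-relative}, and decays like $e^{-c^2 t}$ as $t\to\infty$, where $c^2>0$ is the common lower bound on the essential spectra of $\AAA_j^2$ supplied by Definition~\ref{D:condition at infinity} (``invertible at infinity'').

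With a trace-class resolvent-power difference in hand, the generalised Birman--Krein spectral-shift theorem (as reviewed in \cite{Muller98}) yields a real-valued $\sigma\in L^1_\loc(\RR)$, unique up to an additive constant, such that $\varphi(\AAA_1)-\varphi(\AAA_0)$ is trace class for every $\varphi\in C_0^\infty(\RR)$ and the trace formula \eqref{E:sp shift} holds. This gives (ii). For uniqueness and the vanishing condition (i), if two representatives $\sigma_1,\sigma_2$ both satisfy \eqref{E:sp shift} then $\sigma_1-\sigma_2$ is distributionally constant, hence constant a.e.; on any $(a,b)$ disjoint from both spectra the spectral projections $E_{\AAA_j}(a,b)$ vanish and the local density of $\sigma$ is identically zero, so the normalisation $\sigma\equiv 0$ on $(a,b)$ fixes the constant uniquely.

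For (iii), I would approximate $\lambda\mapsto e^{-t\lambda^2}$ and $\lambda\mapsto \lambda e^{-t\lambda^2}$ by cutoffs $\chi_R\cdot e^{-t\lambda^2}$, $\chi_R\cdot \lambda e^{-t\lambda^2}$ with $\chi_R\in C_0^\infty(\RR)$, $\chi_R\equiv 1$ on $[-R,R]$. The left-hand sides of \eqref{E:sp shift-heat-1}--\eqref{E:sp shift-heat-2} converge to the full heat traces by Theorem~\ref{T:traceclass} and the spectral theorem, while on the right-hand side a polynomial growth bound on $\sigma$ (inherited from the trace-norm estimates of the first step) together with the super-exponential decay of the derivatives of $e^{-t\lambda^2}$ and $\lambda e^{-t\lambda^2}$ allows dominated convergence to take $R\to\infty$ inside the integrals.

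The main obstacle I expect is the first step: one needs trace-norm control of $e^{-t\AAA_1^2}-e^{-t\AAA_0^2}$ that is integrable against $t^{N-1}e^{-t}\,dt$ on all of $(0,\infty)$. The short-time bound comes from Proposition~\ref{P:STexpan-relative}, but the large-$t$ exponential decay requires separating the (finite-dimensional) discrete part of the spectrum inside the band $(-c,c)$ from the essential part, and using the essential-spectrum lower bound together with the spectral theorem. Once these uniform trace-norm bounds are established, the remaining steps are essentially formal manipulations within the Birman--Krein framework.
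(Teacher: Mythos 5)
The paper itself gives no proof of this lemma; it is presented as a recall of facts from M\"uller \cite{Muller98}, Section~2, so your task is really to reconstruct how M\"uller's framework applies. Your overall strategy (Laplace transform to trace-class resolvent powers, Birman--Krein theory, normalization of the constant on a spectral gap, dominated convergence for the heat-type test functions) is the right one, and the first two-thirds of your sketch is fine in outline.

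However, there is a genuine gap in the middle step. You establish, via the Laplace representation, that $(\AAA_1^2+1)^{-N}-(\AAA_0^2+1)^{-N}$ is trace class. That only gives you the spectral shift function for the pair of \emph{squared} operators $(\AAA_1^2,\AAA_0^2)$, i.e.\ a function on $[0,\infty)$ controlling traces of $\psi(\AAA_1^2)-\psi(\AAA_0^2)$. It does \emph{not} by itself give you a spectral shift function for the pair $(\AAA_1,\AAA_0)$ on all of $\RR$, which is what the lemma (and the definition of the eta function) requires. To see the loss of information concretely: if $\AAA_1=-\AAA_0$ then $(\AAA_1^2+1)^{-N}-(\AAA_0^2+1)^{-N}=0$, yet $\varphi(\AAA_1)-\varphi(\AAA_0)=\varphi(-\AAA_0)-\varphi(\AAA_0)$ is certainly not trace class for odd $\varphi\in C_0^\infty$. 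The information you have discarded is exactly the other half of Theorem~\ref{T:traceclass}, namely that $\AAA_1 e^{-t\AAA_1^2}-\AAA_0 e^{-t\AAA_0^2}$ is also trace class. Applying the same Laplace trick to this quantity gives trace-class control of $\AAA_1(\AAA_1^2+1)^{-M}-\AAA_0(\AAA_0^2+1)^{-M}$ for large $M$; one then decomposes an arbitrary $\varphi\in C_0^\infty(\RR)$ as $\varphi(\lambda)=\psi_{\mathrm{ev}}(\lambda^2)+\lambda\,\psi_{\mathrm{odd}}(\lambda^2)$ and uses the even and odd trace-class inputs separately to deduce that $\varphi(\AAA_1)-\varphi(\AAA_0)$ is trace class and to build the spectral shift function for $(\AAA_1,\AAA_0)$. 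Without the odd input, (ii) of the lemma cannot be obtained. This is precisely the content of M\"uller's treatment of eta invariants (his Section on eta invariants uses both semigroup differences); your proposal cites his Section~2, which only covers the nonnegative-operator, even-function case.

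A smaller point: your claimed exponential decay of the trace norm of $e^{-t\AAA_1^2}-e^{-t\AAA_0^2}$ as $t\to\infty$ is false whenever $\AAA_0$ or $\AAA_1$ has nontrivial kernel (which the lemma permits); the trace norm stays bounded away from zero because of the kernel projectors. Fortunately this does not affect your convergence argument, since the $e^{-t}$ factor in the Laplace integrand already handles $t\to\infty$; boundedness of the trace norm on $[1,\infty)$ is all you need there. You also conflate the asymptotics of the \emph{trace} from Proposition~\ref{P:STexpan-relative} with a trace-\emph{norm} bound; what you actually need for the Laplace step is a trace-norm estimate $O(t^{-n/2})$ as $t\to 0$, which does hold in Bunke's framework but is not what Proposition~\ref{P:STexpan-relative} states. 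These are easily fixed; the missing odd-part argument is the substantive issue.
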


Since $\AAA_j$ ($j=0,1$) has discrete spectrum near 0, one can find a $\delta>0$ which is a lower bound for the absolute values of non-zero spectra of $\AAA_0$ and $\AAA_1$. Therefore the interval in Lemma \ref{L:sp shift} can be chosen to be $(-\delta,0)$. Fix $t>0$, let $f_t\in C_0^\infty(\RR)$ such that $f_t(x)=e^{-tx^2}$ for $|x|\le\delta/2$ and $f_t(x)=0$ for $|x|\ge\delta$. Then by \eqref{E:sp shift},
\[
\Tr\big(f_t(\AAA_1)-f_t(\AAA_0)\big)\;=\;\int_0^\delta f_t'(\lambda)\,\sigma(\lambda)\,d\lambda.
\]
Note that $f_t(\AAA_j)$ is equal to $e^{-t\AAA_j^2}$ on the finite-dimensional subspace of $\HH$ in \eqref{E:space decomp} spanned by the eigensections of $\AAA_j$ with eigenvalue 0, and equal to 0 elsewhere. Thus it is a trace-class operator on $\HH$ whose trace is $\dim\ker\AAA_j$. It follows that
\[
\int_0^\delta f_t'(\lambda)\,\sigma(\lambda)\,d\lambda\;=\;\dim\ker\AAA_1\,-\,\dim\ker\AAA_0.
\]
Note that $\sigma(\lambda)=\dim\ker\AAA_0-\dim\ker\AAA_1$ satisfies the above equation. By the uniqueness of spectral shift function,
\[
\sigma(\lambda)\;=\;\dim\ker\AAA_0\,-\,\dim\ker\AAA_1,\qquad 0\le\lambda\le\delta.
\]

By \eqref{E:sp shift-heat-1},
\[
\begin{aligned}
\Tr\big(e^{-t\AAA_1^2}-e^{-t\AAA_0^2}\big)\;=\; & \dim\ker\AAA_1\,-\,\dim\ker\AAA_0 \\
& -2t\int_{-\infty}^{-\delta}\lambda e^{-t\lambda^2}\sigma(\lambda)\,d\lambda\,-\,2t\int_\delta^\infty\lambda e^{-t\lambda^2}\tilde{\sigma}(\lambda)\,d\lambda,
\end{aligned}
\]
where $\tilde{\sigma}(\lambda):=\sigma(\lambda)+\dim\ker\AAA_1-\dim\ker\AAA_0$. For $t\ge1$, both integrals on the right hand side can be estimated by
\[
e^{-t\delta^2/2}\int_{-\infty}^{-\delta}|\lambda e^{-t\lambda^2/2}\sigma(\lambda)|\,d\lambda\;\Big(\text{or }e^{-t\delta^2/2}\int_\delta^\infty|\lambda e^{-t\lambda^2/2}\tilde{\sigma}(\lambda)|\,d\lambda\Big)\;\le\;Ce^{-t\delta^2/2}.
\]
Similarly, by \eqref{E:sp shift-heat-2},
\[
\begin{aligned}
&\Tr\big(\AAA_1e^{-t\AAA_1^2}-\AAA_0e^{-t\AAA_0^2}\big) \\
&\qquad=\;
\int_{-\infty}^{-\delta}(1-2t\lambda^2)e^{-t\lambda^2}\sigma(\lambda)\,d\lambda\,+\,\int_\delta^\infty(1-2t\lambda^2)e^{-t\lambda^2}\tilde{\sigma}(\lambda)\,d\lambda.
\end{aligned}
\]
Again both integrals can be estimated by $(C+C't)e^{-t\delta^2/2}$ for $t\ge1$. 

In conclusion, we have shown the following large time exponentially decaying property of the relative heat traces.

\begin{proposition}\label{P:LTbehav}
There exists a constant $C>0$ such that as $t\to\infty$,
\begin{align*}
\Tr\big(e^{-t\AAA_1^2}-e^{-t\AAA_0^2}\big)\;&=\;\dim\ker\AAA_1\,-\,\dim\ker\AAA_0\,+\,O(e^{-Ct}); \\
\Tr\big(\AAA_1e^{-t\AAA_1^2}-\AAA_0e^{-t\AAA_0^2}\big)\;&=\;O(e^{-Ct}).
\end{align*}
\end{proposition}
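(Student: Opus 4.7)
The plan is to route the proof through the spectral shift function $\sigma(\lambda;\AAA_1,\AAA_0)$ provided by Lemma \ref{L:sp shift}. Since $\AAA_0$ and $\AAA_1$ are Fredholm (a consequence of coinciding and being invertible at infinity), their essential spectra are bounded away from zero, and only finitely many eigenvalues can cluster near $0$. Hence there is $\delta>0$ such that neither operator has any spectrum in $(-\delta,0)\cup(0,\delta)$. Applying Lemma \ref{L:sp shift}(i) on each of the two subintervals, $\sigma$ is locally constant on each side of $0$; what remains is to pin down these two constants.

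To do this, I would test the trace identity \eqref{E:sp shift} against a smooth cutoff $f_t\in C_c^\infty(\RR)$ with $f_t(x)=e^{-tx^2}$ for $|x|\le\delta/2$ and $f_t(x)=0$ for $|x|\ge\delta$. Because $f_t(\AAA_j)$ is then the orthogonal projection onto the finite-dimensional subspace $\ker\AAA_j$, its operator trace equals $\dim\ker\AAA_j$. Writing $\int f_t'(\lambda)\sigma(\lambda)\,d\lambda$ as a sum of integrals on $(-\delta,0)$ and $(0,\delta)$ and using the normalization freedom of $\sigma$, one obtains
\[
\sigma(\lambda)\;=\;\dim\ker\AAA_0-\dim\ker\AAA_1,\qquad 0\le\lambda\le\delta,
\]
so that $\tilde{\sigma}(\lambda):=\sigma(\lambda)+\dim\ker\AAA_1-\dim\ker\AAA_0$ vanishes on $[0,\delta]$, matching the convention used above.

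With $\sigma$ identified near $0$, I would substitute into \eqref{E:sp shift-heat-1} and \eqref{E:sp shift-heat-2} and split each spectral integral at $\pm\delta$. The inner piece on $[-\delta,\delta]$ for the scalar trace collapses, by the fundamental theorem of calculus against the known jump of $\sigma$ across $0$, to $\dim\ker\AAA_1-\dim\ker\AAA_0$; for the weighted trace it vanishes, since the only possible contribution would come from the zero mode but the density $\frac{d}{d\lambda}(\lambda e^{-t\lambda^2})$ integrates trivially against a locally constant $\sigma$ on either side. The outer pieces, over $(-\infty,-\delta]\cup[\delta,\infty)$, I would estimate by factoring out half of the Gaussian: $|\lambda|e^{-t\lambda^2}\le e^{-t\delta^2/2}|\lambda|e^{-t\lambda^2/2}$ for $|\lambda|\ge\delta$ and $t\ge1$, and similarly for $(1-2t\lambda^2)e^{-t\lambda^2}$, producing $O(e^{-t\delta^2/2})$ with implicit constants governed by $\int|\lambda|e^{-\lambda^2/2}|\sigma(\lambda)|\,d\lambda$ and its tilded analogue.

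The main obstacle is to ensure that these last integrals are genuinely finite, i.e.\ that $\sigma$ grows slowly enough against the Gaussian. This is precisely what Assumption \ref{A:traceclass} (through Theorem \ref{T:traceclass}) delivers: the trace-class property of $e^{-t\AAA_1^2}-e^{-t\AAA_0^2}$ and $\AAA_1e^{-t\AAA_1^2}-\AAA_0e^{-t\AAA_0^2}$ for every $t>0$ is exactly what justifies the spectral shift formalism of Lemma \ref{L:sp shift} and the resulting absolute convergence of the tail integrals, making the dominated splitting legitimate and the constant $C=\delta^2/2$ explicit.
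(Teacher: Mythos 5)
Your argument is correct and is essentially the same as the paper's: the same appeal to the spectral shift function of Lemma~\ref{L:sp shift}, the same Gaussian cutoff $f_t$ to identify $\sigma(\lambda)=\dim\ker\AAA_0-\dim\ker\AAA_1$ on $(0,\delta)$, the same splitting of \eqref{E:sp shift-heat-1}--\eqref{E:sp shift-heat-2} at $\pm\delta$, and the same $e^{-t\delta^2/2}$ factoring to control the tails using the absolute convergence guaranteed by the trace-class hypothesis. The only cosmetic difference is that the paper extracts the constant $\dim\ker\AAA_1-\dim\ker\AAA_0$ exactly by introducing the shifted function $\tilde\sigma=\sigma+\dim\ker\AAA_1-\dim\ker\AAA_0$, while you evaluate the middle integral directly; both give the same $O(e^{-Ct})$ remainder.
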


\section{Relative eta function and relative eta invariant}\label{S:releta}

Let $\AAA_0$ and $\AAA_1$ be two self-adjoint Dirac-type operators which coincide and are invertible at infinity. Assume that they satisfy Assumption \ref{A:traceclass}. In the last section, we have known that $\AAA_1e^{-t\AAA_1^2}-\AAA_0e^{-t\AAA_0^2}$ is a trace-class operator. Then we revealed the asymptotic properties of $\Tr(\AAA_1e^{-t\AAA_1^2}-\AAA_0e^{-t\AAA_0^2})$ as $t\to0$ and $t\to\infty$. With the help of these results, we can now talk about the validity of the definition of the  relative eta function.


Recall that
\begin{equation}\label{E:releta func def}
		\eta(s;\AAA_1,\AAA_0) \;:= \;
	\frac{1}{\Gamma((s+1)/2)}\,\int_0^\infty
	  t^{(s-1)/2}
	     \Tr\big(\AAA_1e^{-t\AAA_1^2}-\AAA_0e^{-t\AAA_0^2}\big) \,dt.
\end{equation}
We split the right hand side into the sum of two terms
\begin{equation}\label{E:releta func1-}
	\frac{1}{\Gamma((s+1)/2)}\,\int_0^1
	  t^{(s-1)/2}
	     \Tr\big(\AAA_1e^{-t\AAA_1^2}-\AAA_0e^{-t\AAA_0^2}\big) \,dt
\end{equation}
and
\begin{equation}\label{E:releta func1+}
	\frac{1}{\Gamma((s+1)/2)}\,\int_1^\infty
	  t^{(s-1)/2}
	     \Tr\big(\AAA_1e^{-t\AAA_1^2}-\AAA_0e^{-t\AAA_0^2}\big) \,dt.
\end{equation}

By Proposition \ref{P:STexpan-relative}, the integral in \eqref{E:releta func1-} is absolutely convergent and holomorphic in the half plane $\Re(s)>n$ and admits a meromorphic continuation to the whole complex plane. While by Proposition \ref{P:LTbehav}, the integral in \eqref{E:releta func1+} is absolutely convergent for $s$ in the whole complex plane. To sum up, $\eta(s;\AAA_1,\AAA_0)$ is well-defined in the half plane $\Re(s)>n$ and admits a meromorphic continuation to the whole complex plane. It has a simple pole at $s=0$ with
\[
\Res_{s=0}\eta(s;\AAA_1,\AAA_0)\;=\;\frac{2}{\sqrt{\pi}}\,b_n\;=\;0,
\]
as in Proposition \ref{P:STexpan-relative}. Therefore we obtain the following regularity result.

\begin{theorem}\label{T:releta func}
Let $\AAA_0$ and $\AAA_1$ be two self-adjoint Dirac-type operators satisfying Assumption \ref{A:traceclass}. Then the relative eta function $\eta(s;\AAA_1,\AAA_0)$ of \eqref{E:releta func def} is regular at $s=0$.
\end{theorem}

This induces the definition of relative eta invariant.

\begin{definition}\label{D:releta-sp}
We call $\eta(0;\AAA_1,\AAA_0)\in\RR$ the \emph{relative eta invariant} associated to $\AAA_0$ and $\AAA_1$.
\end{definition}

\begin{remark}\label{R:releta-general}
In more general cases, for example, on manifolds with boundary which will be discussed later, we may only suppose the asymptotic expansion of $\Tr(\AAA_1e^{-t\AAA_1^2}-\AAA_0e^{-t\AAA_0^2})$ as $t\to0$ has the following general form (cf. \cite[(2.12)]{BruningLesch99})
\begin{equation}\label{E:STexp-general}
\Tr\big(\AAA_1e^{-t\AAA_1^2}-\AAA_0e^{-t\AAA_0^2}\big)\;\sim\;\sum_{\substack{\Re(\alpha)\to\infty \\ 0\le k\le k(\alpha)}}b_{\alpha k}t^\alpha\log^kt,
\end{equation}
where $k\in\ZZ_+$ and $\{\alpha\in\CC:b_{\alpha k}\ne0\text{ for some }0\le k\le k(\alpha)\}$ is a countable subset of $\CC$ whose real parts accumulate at most at $\infty$. Then \eqref{E:releta func1-} is absolutely convergent and holomorphic for $\Re(s)>-2\inf\{\Re(\alpha):b_{\alpha k}\ne0\text{ for some }0\le k\le k(\alpha)\}$ and again admits a meromorphic continuation to the whole complex plane, with poles situated at $s=-2\alpha-1$. In this case the regularity of the relative eta function at zero is not clear but the relative eta invariant can still be defined to be the constant term in the Laurent expansion of $\eta(s;\AAA_1,\AAA_0)$ at $s=0$.
\end{remark}

Compared to Definition \ref{D:releta}, where the invariant $\eta(\AAA_1,\AAA_0)$ is only defined for cobordant self-adjoint strongly Callias-type operators, the relative eta invariant $\eta(0;\AAA_1,\AAA_0)$ defined here works for a broader scope of operators. In the following sections, we will work on finding the relationship between these two invariants when they are both defined. To avoid confusions, we will often call $\eta(0;\AAA_1,\AAA_0)$ the \emph{spectral} relative eta invariant and $\eta(\AAA_1,\AAA_0)$ the \emph{index-theoretic} relative eta invariant. Like \eqref{E:reduced releta-1}, one can also define the reduced (spectral) relative eta invariant
\begin{equation}\label{E:reduced releta sp}
\xi(0;\AAA_1,\AAA_0)\;:=\;\frac{1}{2}\,\big(\eta(0;\AAA_1,\AAA_0)\,+\,\dim\ker\AAA_1\,-\,\dim\ker\AAA_0\big).
\end{equation}

\begin{remark}\label{R:reletafuc}
Recall that the eta function of a self-adjoint Dirac-type operator $D$ on a closed manifold can be alternatively defined as \eqref{E:etafunc} for $\Re(s)\gg0$. Unlike it, such an expression of the relative eta function does not exist in general because the individual operator $\AAA_je^{-t\AAA_j^2}$ may not be trace-class.
\end{remark}

One can easily verify the following analogue of Proposition \ref{P:reletainv-prop}.

\begin{proposition}\label{P:reletafunc-prop}
For $\Re(s)>n$ as well as $s=0$, the relative eta function defined above satisfies the following properties
\begin{enumerate}
\item $\eta(s;\AAA_0,\AAA_0)=0$,
\item $\eta(s;\AAA_2,\AAA_1)+\eta(s;\AAA_1,\AAA_0)=\eta(s;\AAA_2,\AAA_0)$.
\end{enumerate}
\end{proposition}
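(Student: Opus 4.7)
Property (i) is immediate: when $\AAA_1 = \AAA_0$, the integrand in \eqref{E:releta func def} vanishes identically for every $t>0$, so $\eta(s;\AAA_0,\AAA_0) = 0$ for all $s$ at which the function is defined.

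For property (ii) the strategy is first to establish the identity at the level of integrands by linearity of the operator trace, then to integrate it to obtain (ii) for $\Re(s) > n$, and finally to extend it to $s=0$ by meromorphic continuation. Since $\AAA_0,\AAA_1,\AAA_2$ pairwise coincide (and are invertible) at infinity on a common open set $U$, I would choose compact subsets $K_j \Subset Y_j$ for $j=0,1,2$ outside of which all three operators agree, and realize them as operators on the common Hilbert space
\[
\HH \;:=\; L^2(K_0,E_0|_{K_0}) \,\oplus\, L^2(K_1,E_1|_{K_1}) \,\oplus\, L^2(K_2,E_2|_{K_2}) \,\oplus\, L^2(U,E_0|_U),
\]
with $\AAA_j$ acting as itself on $L^2(K_j,E_j|_{K_j})\oplus L^2(U,E_0|_U)$ and as zero on the complementary summands. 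By Theorem \ref{T:traceclass} applied to each of the three pairs, each difference $\AAA_ie^{-t\AAA_i^2} - \AAA_je^{-t\AAA_j^2}$ is trace class on $\HH$, and by Remark \ref{R:rel trace indep} its trace coincides with the one used in \eqref{E:releta func def}. Linearity of the trace then gives, pointwise in $t>0$,
\[
\Tr\big(\AAA_2e^{-t\AAA_2^2}-\AAA_1e^{-t\AAA_1^2}\big)+\Tr\big(\AAA_1e^{-t\AAA_1^2}-\AAA_0e^{-t\AAA_0^2}\big)=\Tr\big(\AAA_2e^{-t\AAA_2^2}-\AAA_0e^{-t\AAA_0^2}\big).
\]
Multiplying by $t^{(s-1)/2}/\Gamma((s+1)/2)$ and integrating from $0$ to $\infty$ (which is legitimate for $\Re(s)>n$ by the short-time expansion of Proposition \ref{P:STexpan-relative} and the large-time decay of Proposition \ref{P:LTbehav}) yields property (ii) on the half-plane $\Re(s)>n$.

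To extend (ii) to $s=0$, note that both sides of the identity are meromorphic on $\CC$ by Proposition \ref{P:releta func}, so the identity propagates by uniqueness of meromorphic continuation to every $s$ where both sides are regular, in particular to a punctured neighborhood of $0$. Comparing Laurent expansions, the polar parts match because the residues are determined by the local coefficients $b_n$ of Proposition \ref{P:STexpan-relative}, which themselves satisfy the cocycle relation
\[
\int(b_{2,n}-b_{1,n})(1-\phi)\,dy \,+\, \int(b_{1,n}-b_{0,n})(1-\phi)\,dy \,=\, \int(b_{2,n}-b_{0,n})(1-\phi)\,dy.
\]
Consequently the constant terms in the Laurent expansions at $s=0$, that is, the generalized values $\eta(0;\cdot,\cdot)$, are also additive, which is property (ii) at $s=0$.

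The only genuinely non-formal step is the construction of the common ambient Hilbert space $\HH$ accommodating all three operators and verifying, via Remark \ref{R:rel trace indep}, that the three relative traces appearing in the identity really are the same numbers as in the individual definitions; everything else is a routine use of linearity of the trace and of meromorphic continuation.
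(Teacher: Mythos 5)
The paper does not spell out a proof of this proposition; it simply asserts ``One can easily verify the following analogue of Proposition~\ref{P:reletainv-prop}.'' Your argument is exactly the intended verification and it is correct: (i) is immediate since the integrand vanishes identically, and for (ii) the key point is precisely the one you isolate, namely that all three relative traces can be read off from the same three-summand Hilbert space (or, equivalently, from the pointwise kernel formula of Corollary~\ref{C:traceclass}, where the contributions from the common set $U$ and from each $K_j$ combine telescopically), after which linearity gives the cocycle identity for $\Re(s)>n$, and the identity of meromorphic functions, compared coefficient-by-coefficient at $s=0$, gives additivity of both the residues and the constant (generalized) terms. One small remark: the phrase ``the identity propagates ... to every $s$ where both sides are regular'' is slightly weaker than what you need and use in the next sentence; what meromorphic continuation actually gives is equality of meromorphic functions, hence equality of the full Laurent expansions at $s=0$, which is the right statement and is what your subsequent comparison of polar and constant parts invokes. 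The explicit check of the cocycle relation for the $b_n$ coefficients is a nice sanity check but is not logically needed once the meromorphic identity is in hand.
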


Analogously, one can also talk about the relative zeta function and relative zeta invariant. The \emph{relative zeta function} is defined by
\begin{equation}\label{E:relzeta func def}
\begin{aligned}
		&\zeta(s;\AAA_1,\AAA_0) \\
		&\ := \;
	\frac{1}{\Gamma(s/2)}\,\int_0^\infty
	  t^{s/2-1}
	     \Big(\Tr\big(e^{-t\AAA_1^2}-e^{-t\AAA_0^2}\big)-\dim\ker\AAA_1+\dim\ker\AAA_0\Big)\,dt.
\end{aligned}	     
\end{equation}
Again from Propositions \ref{P:STexpan-relative} and \ref{P:LTbehav}, $\zeta(s;\AAA_1,\AAA_0)$ is holomorphic in the half plane $\Re(s)>n$ and admits a meromorphic continuation to the whole complex plane. In addition, it is regular at $s=0$. Therefore we call $\zeta(0;\AAA_1,\AAA_0)$ the \emph{relative zeta invariant} of $\AAA_0$ and $\AAA_1$.

\section{Variation formulas of the relative eta invariant}\label{S:varFormula}

We analyse the smooth parts of the index-theoretic and spectral relative eta invariants by looking at their variation formulas. Unlike the compact case, things are a little complicated in non-compact setting. We first give a brief review of the results on compact manifolds, which is somehow well-known and can be found, for instance, in \cites{Gilkey95book,Melrose93APS,Muller94}, etc.

\begin{theorem}\label{T:varFormula-cpt}
Let $D_r$ be a smooth one-parameter family of self-adjoint Dirac-type operators on an $n$-dimensional compact manifold $M$. Let
\[
\eta(s;D_r)\;=\;\frac{1}{\Gamma((s+1)/2)}\int_0^\infty
	  t^{(s-1)/2}\Tr\big(D_re^{-tD_r^2}\big)\,dt
\]
be the eta function and $\eta(D_r)=\eta(0;D_r)$ be the eta invariant of $D_r$. Assume as $t\to0$, $\Tr(\dot{D}_re^{-tD_r^2})$ has the following asymptotic expansion
\[
\Tr\big(\dot{D}_re^{-tD_r^2}\big)\;\sim\;\sum_{k=0}^\infty c_k(r)t^{(k-n-1)/2},
\]
where $\dot{D}_r=\frac{d}{dr}D_r$.
\begin{enumerate}
\item Suppose that $\dim\ker D_r$ is constant. Then for $\Re(s)>n$, $\eta(s;D_r)$ depends smoothly on $r$ and
\begin{equation}\label{E:varFormula-cpt-1}
\frac{\p}{\p r}\eta(s;D_r)\;=\;-\frac{s}{\Gamma((s+1)/2)}\int_0^\infty
	  t^{(s-1)/2}\Tr\big(\dot{D}_re^{-tD_r^2}\big)\,dt.
\end{equation}

\item The mod $\ZZ$ reduction $\oeta(D_r)$ of the eta invariant is a smooth function of $r$ and
\begin{equation}\label{E:varFormula-cpt-2}
\frac{d}{dr}\oeta(D_r)\;=\;-\frac{2}{\sqrt{\pi}}c_n(r).
\end{equation}
In particular, when $\dim\ker D_r$ is constant, $\oeta(D_r)$ can be replaced by $\eta(D_r)$ in \eqref{E:varFormula-cpt-2}.
\end{enumerate}
\end{theorem}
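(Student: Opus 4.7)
\emph{Proof plan.} The plan is to first establish part (i) via Duhamel's principle applied to $\frac{d}{dr}e^{-tD_r^2}$ together with integration by parts in the Mellin variable, and then deduce part (ii) by extracting the residue of the resulting Mellin integral at $s=0$. The mod $\ZZ$ reduction in (ii) is needed to absorb the integer jumps of $\eta(D_r)$ caused by eigenvalues of $D_r$ crossing zero as $r$ varies.

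First I would differentiate $\Tr(D_r e^{-tD_r^2})$ in $r$ using Duhamel's formula
\begin{equation*}
\frac{d}{dr}e^{-tD_r^2}\;=\;-\int_0^t e^{-(t-\tau)D_r^2}\,(\dot D_r D_r+D_r\dot D_r)\,e^{-\tau D_r^2}\,d\tau.
\end{equation*}
Multiplying by $D_r$, taking the trace, and using cyclicity of $\Tr$ together with the commutation of $D_r$ and $e^{-\tau D_r^2}$, both summands contribute equally and the $\tau$-integral collapses to a factor $t$. Combined with the identity $\frac{d}{dt}\Tr(\dot D_r e^{-tD_r^2})=-\Tr(\dot D_r D_r^2 e^{-tD_r^2})$, this yields
\begin{equation*}
\frac{d}{dr}\Tr\bigl(D_r e^{-tD_r^2}\bigr)\;=\;\Tr\bigl(\dot D_r e^{-tD_r^2}\bigr)\,-\,2t\,\Tr\bigl(\dot D_r D_r^2 e^{-tD_r^2}\bigr)\;=\;\Tr\bigl(\dot D_r e^{-tD_r^2}\bigr)\,+\,2t\,\frac{d}{dt}\Tr\bigl(\dot D_r e^{-tD_r^2}\bigr).
\end{equation*}

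Next I would multiply by $t^{(s-1)/2}$, integrate over $t\in(0,\infty)$, and integrate the $t$-derivative term by parts. For $\Re(s)>n$ the $t\to 0$ boundary vanishes by the hypothesized short-time expansion. For the $t\to\infty$ boundary, the assumption that $\dim\ker D_r$ is constant provides a spectral gap of $D_r$ on $(\ker D_r)^\perp$; moreover, differentiating $D_r v_r=0$ along a smooth family $v_r$ spanning $\ker D_r$ forces $P_r\dot D_r P_r=0$ (with $P_r$ the projection onto $\ker D_r$), so the kernel contributes nothing and $\Tr(\dot D_r e^{-tD_r^2})$ decays exponentially. Hence both boundary terms drop out and one finds
\begin{equation*}
\int_0^\infty t^{(s-1)/2}\,\frac{d}{dr}\Tr\bigl(D_r e^{-tD_r^2}\bigr)\,dt\;=\;-s\int_0^\infty t^{(s-1)/2}\,\Tr\bigl(\dot D_r e^{-tD_r^2}\bigr)\,dt,
\end{equation*}
which upon division by $\Gamma((s+1)/2)$ is precisely \eqref{E:varFormula-cpt-1}. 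The interchange of $\frac{d}{dr}$ with the Mellin integral is justified by standard uniform estimates on the parametrix of $e^{-tD_r^2}$.

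For part (ii), I would set $s=0$ in the identity derived above. The hypothesized short-time expansion shows that $\int_0^\infty t^{(s-1)/2}\Tr(\dot D_r e^{-tD_r^2})\,dt$ continues meromorphically to $\CC$ with a simple pole at $s=0$ of residue $2c_n(r)$ (coming from the $k=n$ term of the expansion after splitting the integral at $t=1$); multiplying by $-s/\Gamma((s+1)/2)$ and using $\Gamma(1/2)=\sqrt{\pi}$ yields the local formula $-\tfrac{2}{\sqrt\pi}c_n(r)$, proving \eqref{E:varFormula-cpt-2} for $\eta(D_r)$ itself whenever $\dim\ker D_r$ is locally constant in $r$. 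In general, eigenvalues of $D_r$ may cross zero as $r$ varies, and each such crossing induces an integer jump of $\pm 2$ in $\eta(D_r)$; these jumps vanish modulo $\ZZ$, so $\oeta(D_r)\in\RR/\ZZ$ is continuous, and since the right-hand side of \eqref{E:varFormula-cpt-2} is a smooth local spectral density in $r$, $\oeta(D_r)$ is smooth with the stated derivative. The main obstacle is the careful accounting of these integer jumps, handled by the observation that the discontinuities of $\eta(D_r)$ are quantized in $\ZZ$ while the local right-hand side is continuous and smooth throughout.
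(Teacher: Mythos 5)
The paper does not actually prove Theorem \ref{T:varFormula-cpt}; it records it as a classical fact and cites \cite{Gilkey95book}, \cite{Melrose93APS}, \cite{Muller94}. So I am comparing your proposal against the standard argument and against the paper's own proof of the non-compact analogue (Theorem \ref{T:varFormula-noncpt-sp}), which is where the same technical issue is actually dealt with.

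Your part (i) is correct and is the standard argument: Duhamel plus cyclicity gives $\frac{\p}{\p r}\Tr(D_r e^{-tD_r^2}) = (1+2t\frac{\p}{\p t})\Tr(\dot D_r e^{-tD_r^2})$ (the identity the paper records at the start of Subsection \ref{SS:varFormula-sp}), and the integration by parts in $t$ is justified at $t\to 0$ by the short-time expansion for $\Re(s)>n$ and at $t\to\infty$ by the spectral gap; your observation that $P_r\dot D_r P_r=0$ when $\dim\ker D_r$ is constant is the right way to dispose of the kernel.

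For part (ii) the mechanism you invoke is correct but stated more loosely than the paper's treatment. The delicate point is that at parameter values $r_0$ where $\dim\ker D_r$ jumps, $\Tr(\dot D_r e^{-tD_r^2})$ no longer decays uniformly as $t\to\infty$, so the Mellin integral representation and formula \eqref{E:varFormula-cpt-1} do not apply at, or uniformly near, $r_0$. You handle this by (a) observing that $\eta(D_r)$ jumps by an integer at such $r_0$, (b) noting the local formula is smooth, and (c) appealing (implicitly) to the fundamental theorem of calculus to upgrade continuity plus off-crossing smoothness to global smoothness of $\oeta$. This works, but it is a sketch; in particular (a) deserves a sentence (decompose $\eta(s;D_r)$ as a finite sum over eigenvalues in $(-\delta,\delta)$, which at $s=0$ is an integer, plus a remainder with a spectral gap) and the passage from continuity to smoothness needs the remainder term to admit a convergent Mellin representation near $r_0$. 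The paper's own proof of Theorem \ref{T:varFormula-noncpt-sp} uses precisely this $\delta$-cutoff, but packaged more robustly: it replaces $D_r$ by $D_r' := D_r(\id-\Pi_r^\delta)+\Pi_r^\delta$, which has a uniform spectral gap and differs from $D_r$ by a finite-rank operator, so that $\eta(s;D_r')$ is globally smooth in $r$ with no crossings to track, and $\oeta(0;D_r') = \oeta(0;D_r)$. Adopting that device would close the gap in your argument and avoid the case analysis around crossings; as written, your (ii) is morally right but needs that extra step to be a complete proof.
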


\begin{remark}\label{R:varFormula-cpt}
(1) Here the manifold can be with or without boundary. If $M$ has a boundary, then one can impose an APS type boundary condition and require that the operator does not depend on $r$ near the boundary (see \cite[Section 2]{Muller94}).

(2) In the theorem, notice that $c_k(r)$ can be computed locally as $c_k(r)=\int_Mc_k(r)(x)dx$, where $c_k(r)(x)$ represents the coefficient in the asymptotic expansion of the pointwise trace $\tr(\dot{D}_re^{-tD_r^2}(x,x))$ as $t\to0$. Therefore one sees that even though the eta invariant is not locally computable, its variation is actually local. Moreover, one can use the fact that $\frac{\p}{\p r}\eta(s;D_r)$ is holomorphic at $s=0$ to deduce that the residue of $\eta(s;D_r)$ at $s=0$ does not depend on $r$, hence is a homotopy invariant.
\end{remark}

\subsection{Variation formula of the index-theoretic relative eta invariant}\label{SS:varFormula-ind}

Now we establish a formula analogous to \eqref{E:varFormula-cpt-2} for the index-theoretically defined relative eta invariant $\eta(\AAA_1,\AAA_0)$. Let $\AAA_0$ be a self-adjoint strongly Callias-type operator on a complete $n$-dimensional manifold $(Y_0,E_0)$. Let $\AAA_{1,r}$ be a smooth family of self-adjoint strongly Callias-type operators on $(Y_1,E_1)$ which is cobordant to $\AAA_0$. Then $\dot{\AAA}_{1,r}$ vanishes outside a compact set. Suppose we have the following asymptotic expansion for the pointwise trace
\[
\tr(\dot{\AAA}_{1,r}e^{-t\AAA_{1,r}^2}(y,y))\;\sim\;\sum_{k=0}^\infty c_{1,k}(r)(y)t^{(k-n-1)/2},\qquad\text{as }t\to0.
\]
It follows that $c_{1,k}(r)(y)$ is compactly supported over $Y_1$.

\begin{theorem}\label{T:varFormula-noncpt-ind}
Under the above hypothesis, $\oeta(\AAA_{1,r},\AAA_0)$ depends smoothly on $r$ and
\begin{equation}\label{E:varFormula-noncpt-ind}
\frac{d}{dr}\oeta(\AAA_{1,r},\AAA_0)\;=\;-\frac{2}{\sqrt{\pi}}\int_{Y_1}c_{1,n}(r)(y)\,dy.
\end{equation}
\end{theorem}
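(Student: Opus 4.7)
The approach is to combine the index-theoretic Definition \ref{D:releta} of $\eta(\AAA_1,\AAA_0)$ with the cocycle property of Proposition \ref{P:reletainv-prop}(ii). The latter gives
\[
\frac{d}{dr}\oeta(\AAA_{1,r},\AAA_0)\bigg|_{r=r_0} \;=\; \frac{d}{dr}\oeta(\AAA_{1,r},\AAA_{1,r_0})\bigg|_{r=r_0},
\]
reducing matters to computing the derivative of the eta invariant of the \emph{family against itself} at a reference value $r_0$. To produce cobordisms between members of the family, I would set $X_r := [r_0,r]\times Y_1$ and take the strongly Callias-type operator
\[
\DD^+ \;:=\; \rmc(\p_u)\bigl(\p_u + \AAA_{1,u}\bigr)
\]
on the ambient cylinder $(r_0-\varepsilon, r_0+\varepsilon)\times Y_1$. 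Because $\dot{\AAA}_{1,u}$ has compact support and each $\AAA_{1,u}$ is product at infinity in the same way (all being cobordant to $\AAA_0$), $\DD^+$ is product at infinity on $X_r$, so $(X_r,\DD^+\oplus\DD^-)$ is an almost compact cobordism between $\AAA_{1,r_0}$ and $\AAA_{1,r}$.

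Applying Definition \ref{D:releta} gives
\[
\eta(\AAA_{1,r},\AAA_{1,r_0}) \;=\; 2\,\ind\DD^+_{\aps,r} \,-\, 2\int_{X_r}\AS(\DD^+) \,+\, \dim\ker\AAA_{1,r_0} \,+\, \dim\ker\AAA_{1,r},
\]
and the integer-valued terms die after reduction modulo $\ZZ$, so that
\[
\oeta(\AAA_{1,r},\AAA_{1,r_0}) \;\equiv\; -2\int_{X_r}\AS(\DD^+) \pmod{\ZZ}.
\]
The density $\AS(\DD^+)$ is locally defined and smooth on the fixed ambient cylinder (independent of $r$), so $X_r$ varies only through its upper face and the fundamental theorem of calculus yields
\[
\frac{d}{dr}\int_{X_r}\AS(\DD^+)\bigg|_{r=r_0} \;=\; \int_{Y_1}\alpha_{\rm AS}(\DD^+)\big|_{\{r_0\}\times Y_1}.
\]
The last ingredient is the standard local identity, underlying the compact variation formula of Theorem \ref{T:varFormula-cpt}(ii), identifying the fibrewise AS density of the suspension $\rmc(\p_u)(\p_u + \AAA_{1,u})$ at $\{r_0\}\times Y_1$ with $\frac{1}{\sqrt{\pi}}\,c_{1,n}(r_0)(y)\,dy$. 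Since this identity is purely local (it follows from comparing the heat-kernel expansions of $\DD^-\DD^+$ on the cylinder and of $\AAA_{1,u}^2$ on $Y_1$), it applies pointwise on the non-compact $Y_1$ exactly as in the compact case. Assembling the pieces gives \eqref{E:varFormula-noncpt-ind}.

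The main obstacle I anticipate is the smoothness of $\oeta(\AAA_{1,r},\AAA_0)$ in $r$ across exceptional values where an eigenvalue of $\AAA_{1,r}$ crosses zero. At such values the APS boundary condition jumps, $\ind\DD^+_{\aps,r}$ jumps by an integer, and $\dim\ker\AAA_{1,r}$ also jumps; one must verify that the total integer contribution of $2\ind\DD^+_{\aps,r} + \dim\ker\AAA_{1,r_0} + \dim\ker\AAA_{1,r}$ is an integer (hence dies modulo $\ZZ$), so that $\oeta$ indeed extends smoothly across the exceptional locus. With this in hand, the derivative formula, valid on the complement of the jump locus by the argument above, extends globally.
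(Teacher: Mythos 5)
Your approach---reducing to a cobordism of the family against itself and then differentiating the Atiyah--Singer integral with respect to a moving boundary---is genuinely different from the paper's. The paper keeps a \emph{fixed} cobordism manifold $X$ with an $r$-dependent family $\DD_r$ and differentiates the integrand, obtaining a divergence via a local McKean--Singer identity and then using Stokes' theorem to reduce to a boundary term. You instead fix the operator $\DD^+=\rmc(\p_u)(\p_u+\AAA_{1,u})$ on an ambient cylinder and vary the domain $X_r=[r_0,r]\times Y_1$. In spirit the two are reparametrizations of each other, but your version has a real gap as stated.

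The gap is at the construction of the cobordism. Definition \ref{D:almost compact cobordism} requires $\DD$ to be in \emph{product form} near $\p X$, i.e.\ constant in the normal variable $u$ near the boundary faces $\{r_0\}\times Y_1$ and $\{r\}\times Y_1$; see also Subsection \ref{SS:APSind}, where ``product near the boundary'' explicitly means $\DD^+=\rmc(\nu)(\p_u+\AAA)$ with $\AAA$ independent of $u$. Your operator $\rmc(\p_u)(\p_u+\AAA_{1,u})$ has a $u$-dependent tangential part on the compact core of $Y_1$, so $(X_r,\DD)$ is \emph{not} an almost compact cobordism, and Definition \ref{D:releta} cannot be applied to it. The standard fix---replace $\AAA_{1,u}$ by $\AAA_{1,\rho_r(u)}$ for a smooth non-decreasing $\rho_r$ that is constant near the two endpoints of $[r_0,r]$, exactly as the paper does in Example \ref{Ex:compute releta}---restores the product structure but makes the operator depend on $r$ throughout the interior of $X_r$. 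At that point ``$\AS(\DD^+)$ is independent of $r$, only the domain moves'' is no longer true, and the fundamental-theorem-of-calculus identification of $\frac{d}{dr}\int_{X_r}\AS$ with a slice integral fails. Once you reparametrize $[r_0,r]$ back to a fixed interval to salvage the derivative, you are computing $\frac{\p}{\p r}\int_X\AS(\DD_r^+)$ for an $r$-dependent family on a fixed $X$---i.e.\ exactly the quantity the paper evaluates. In other words, your two ideas (moving boundary \emph{and} fixed operator) cannot both hold at once, and removing the contradiction lands you back at the paper's computation. You also leave the final local identification of the transverse component of $\AS(\DD^+)$ along $\{r_0\}\times Y_1$ with $\frac{1}{\sqrt{\pi}}c_{1,n}(r_0)(y)$ as ``standard''; this is precisely what the paper's divergence-and-Stokes argument proves (equations \eqref{E:varRelativetrace-1}--\eqref{E:varFormula-expansion}), so citing it without proof leaves the essential content of the theorem unestablished.

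The concern you raise in the final paragraph about eigenvalue crossings is well placed, and your resolution is correct in outline: after reducing mod $\ZZ$, only $-2\int_X\AS(\DD_r^+)$ survives, and this is manifestly a smooth function of $r$ because $\AS(\DD_r^+)$ is a universal local expression in the coefficients of $\DD_r^+$. The paper uses exactly this observation (the reduction $\frac{d}{dr}\oeta(\AAA_{1,r},\AAA_0)=-2\int_X\frac{\p}{\p r}\AS(\DD_r^+)$ at the start of its proof), so this part of your proposal aligns with the paper once the cobordism issue is repaired.
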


\begin{remark}\label{R:varFormula-noncpt-ind}
As one shall see in Lemma \ref{L:varTraceclass} below, if $\AAA_0$ and $\AAA_{1,r}$ satisfy Assumption \ref{A:traceclass}, then $\dot{\AAA}_{1,r}e^{-t\AAA_{1,r}^2}$ is in fact a trace-class operator. Thus the integral on the right hand side of \eqref{E:varFormula-noncpt-ind} is globally the coefficient corresponding to $t^{-1/2}$ in the short time asymptotic expansion of $\Tr(\dot{\AAA}_{1,r}e^{-t\AAA_{1,r}^2})$.
\end{remark}

\begin{proof}
By assumption, we can find $(X,\DD_r)$, an almost compact cobordism between $\AAA_0$ and $\AAA_{1,r}$ (cf. Definition \ref{D:almost compact cobordism}), where $X$ is independent of $r$ with $\p X=Y_0\sqcup Y_1$ and $\DD_r=\DD_r^+\oplus\DD_r^-$ is a $\ZZ_2$-graded strongly Callias-type operator that depends smoothly on $r$. The restriction of $\DD_r^+=D_r^++\Psi_r^+$ (resp. $\DD_r^-=D_r^-+\Psi_r^-$) to $Y_0$ is $\AAA_0$ (resp. $\AAA_0^\sharp=\rmc(\nu_{Y_0})\AAA_0\rmc(\nu_{Y_0})$) and to $Y_1$ is $-\AAA_{1,r}$ (resp. $-\AAA_{1,r}^\sharp=-\rmc(\nu_{Y_1})\AAA_{1,r}\rmc(\nu_{Y_1})$).
Then by Definition \ref{D:releta},
\[
\frac{d}{dr}\oeta(\AAA_{1,r},\AAA_0)\;=\;-2\int_X\frac{\p}{\p r}\AS(\DD_r^+)
\]
and it suffices to prove
\[
\int_X\frac{\p}{\p r}\AS(\DD_r^+)\;=\;\frac{1}{\sqrt{\pi}}\int_{Y_1}c_{1,n}(r)(y)\,dy.
\]

Recall that $\AS(\DD_r^+)(x)$ is equal to the constant term in the short time asymptotic expansion of $\tr(e^{-t\DD_r^-\DD_r^+}(x,x))-\tr(e^{-t\DD_r^+\DD_r^-}(x,x))$, where $e^{-t\DD_r^\mp\DD_r^\pm}(\cdot,\cdot)$ denotes the heat kernel on the double of $X$. We have
\[
\begin{aligned}
& \frac{\p}{\p r}\tr(e^{-t\DD_r^-\DD_r^+}(x,x))\,=\,-\tr\big[t(\dot{\DD}_r^-\DD_r^++\DD_r^-\dot{\DD}_r^+)\,e^{-t\DD_r^-\DD_r^+}(x,x)\big] \\
=\; & -\tr\big[t(\dot{\DD}_r^-e^{-t\DD_r^+\DD_r^-}\DD_r^+)(x,x)\big]-\tr\big[t\DD_r^-\dot{\DD}_r^+e^{-t\DD_r^-\DD_r^+}(x,x)\big] \\
=\; & -\tr\big[t(\dot{\DD}_r^-e^{-t\DD_r^+\DD_r^-}D_r^+)(x,x)\big]-\tr\big[t(\dot{\DD}_r^-e^{-t\DD_r^+\DD_r^-}\Psi_r^+)(x,x)\big] \\
& -\tr\big[t(D_r^-\dot{\DD}_r^+e^{-t\DD_r^-\DD_r^+})(x,x)\big]-\tr\big[t(\Psi_r^-\dot{\DD}_r^+e^{-t\DD_r^-\DD_r^+})(x,x)\big],
\end{aligned}
\]
where we use the operator equality $\DD_r^+e^{-t\DD_r^-\DD_r^+}=e^{-t\DD_r^+\DD_r^-}\DD_r^+$ to get the second line. Likewise,
\[
\begin{aligned}
& \frac{\p}{\p r}\tr(e^{-t\DD_r^+\DD_r^-}(x,x)) \\
=\; & -\tr\big[t(\dot{\DD}_r^+e^{-t\DD_r^-\DD_r^+}D_r^-)(x,x)\big]-\tr\big[t(\dot{\DD}_r^+e^{-t\DD_r^-\DD_r^+}\Psi_r^-)(x,x)\big] \\
& -\tr\big[t(D_r^+\dot{\DD}_r^-e^{-t\DD_r^+\DD_r^-})(x,x)\big]-\tr\big[t(\Psi_r^+\dot{\DD}_r^-e^{-t\DD_r^+\DD_r^-})(x,x)\big].
\end{aligned}
\]
Note that
\begin{multline*}
\tr\big[t(\dot{\DD}_r^-e^{-t\DD_r^+\DD_r^-}\Psi_r^+)(x,x)\big]\;=\;\tr\big[t(\dot{\DD}_r^-e^{-t\DD_r^+\DD_r^-})(x,x)\Psi_r^+(x)\big] \\
\;=\;\tr\big[t\,\Psi_r^+(x)(\dot{\DD}_r^-e^{-t\DD_r^+\DD_r^-})(x,x)\big]\;=\;\tr\big[t(\Psi_r^+\dot{\DD}_r^-e^{-t\DD_r^+\DD_r^-})(x,x)\big].
\end{multline*}
Similarly
\[
\tr\big[t(\dot{\DD}_r^+e^{-t\DD_r^-\DD_r^+}\Psi_r^-)(x,x)\big]\;=\;\tr\big[t(\Psi_r^-\dot{\DD}_r^+e^{-t\DD_r^-\DD_r^+})(x,x)\big].
\]
With these cancellations, we obtain
\begin{align}
& \frac{\p}{\p r}\big[\tr(e^{-t\DD_r^-\DD_r^+}(x,x))-\tr(e^{-t\DD_r^+\DD_r^-}(x,x))\big] \nonumber \\
=\; & -\tr\big[t(\dot{\DD}_r^-e^{-t\DD_r^+\DD_r^-}D_r^+)(x,x)\big]+\tr\big[t(\dot{\DD}_r^+e^{-t\DD_r^-\DD_r^+}D_r^-)(x,x)\big] \label{E:varRelativetrace-1} \\
& -\tr\big[t(D_r^-\dot{\DD}_r^+e^{-t\DD_r^-\DD_r^+})(x,x)\big]+\tr\big[t(D_r^+\dot{\DD}_r^-e^{-t\DD_r^+\DD_r^-})(x,x)\big]. \label{E:varRelativetrace-2}
\end{align}
We now take a closer look at \eqref{E:varRelativetrace-1}. By the relation between the kernel of adjoint operators,
\[
\begin{aligned}
\tr\big[(\dot{\DD}_r^-e^{-t\DD_r^+\DD_r^-}D_r^+)(x,x)\big]\; & =\;\tr\big[(\dot{\DD}_r^-e^{-t\DD_r^+\DD_r^-}D_r^+)(x,x')|_{x=x'}\big] \\
& =\;\tr\big[\overline{(D_{r,x'}^-e^{-t\DD_r^+\DD_r^-}\dot{\DD}_r^+)(x',x)}^*|_{x=x'}\big] \\
& =\;-\tr\big[D_{r,x'}^+\overline{(e^{-t\DD_r^+\DD_r^-}\dot{\DD}_r^+)(x',x)}^*|_{x=x'}\big] \\
& =\;-\tr\big[D_{r,x'}^+(\dot{\DD}_r^-e^{-t\DD_r^+\DD_r^-})(x,x')|_{x=x'}\big],
\end{aligned}
\]
where $D_{r,x'}$ denotes action on the $x'$ variable and $\overline{\cdot}^*$ means taking adjoint. Similarly,
\[
\tr\big[(\dot{\DD}_r^+e^{-t\DD_r^-\DD_r^+}D_r^-)(x,x)\big]\;=\;-\tr\big[D_{r,x'}^-(\dot{\DD}_r^+e^{-t\DD_r^-\DD_r^+})(x,x')|_{x=x'}\big].
\]
On the other hand, the components in \eqref{E:varRelativetrace-2} are
\[
\begin{aligned}
(D_r^-\dot{\DD}_r^+e^{-t\DD_r^-\DD_r^+})(x,x)\; &= \;D_{r,x}^-(\dot{\DD}_r^+e^{-t\DD_r^-\DD_r^+})(x,x')|_{x=x'}, \\
(D_r^+\dot{\DD}_r^-e^{-t\DD_r^+\DD_r^-})(x,x)\; &= \;D_{r,x}^+(\dot{\DD}_r^-e^{-t\DD_r^+\DD_r^-})(x,x')|_{x=x'}.
\end{aligned}
\]
We write the Dirac operator $D_r^\pm$ in the form $\sum_{j=1}^{n+1}\rmc_r^\pm(e_j)\nabla_{e_j}^E(r)$ with respect to an orthonormal moving frame $\{e_j\}_{j=1}^{n+1}$. Combining \eqref{E:varRelativetrace-1} and \eqref{E:varRelativetrace-2} and applying the arguments of \cite[Lemma 7.6]{Mueller96}, we obtain a local McKean--Singer type formula
\[
\begin{aligned}
& \frac{\p}{\p r}\big[\tr(e^{-t\DD_r^-\DD_r^+}(x,x))-\tr(e^{-t\DD_r^+\DD_r^-}(x,x))\big] \nonumber \\
=\; & -\tr\big[t(D_{r,x}^-+D_{r,x'}^-)(\dot{\DD}_r^+e^{-t\DD_r^-\DD_r^+})(x,x')|_{x=x'}\big] \\
& +\tr\big[t(D_{r,x}^++D_{r,x'}^+)(\dot{\DD}_r^-e^{-t\DD_r^+\DD_r^-})(x,x')|_{x=x'}\big] \\
=\; & -t\operatorname{div}V_r,
\end{aligned}
\]
where
\[
V_r\;=\;\sum_{j=1}^{n+1}\Big[\tr\big[\rmc_r^-(e_j)\dot{\DD}_r^+e^{-t\DD_r^-\DD_r^+}(x,x)\big]-\tr\big[\rmc_r^+(e_j)\dot{\DD}_r^-e^{-t\DD_r^+\DD_r^-}(x,x)\big]\Big]e_j.
\]

From the discussion of $(X,\DD_r)$ at the beginning of the proof, it follows that $\dot{\DD}_r^\pm=0$ near $Y_0$ while $\rmc^-(e_{n+1})\dot{\DD}_r^+=-\dot{\AAA}_{1,r}$ and $\rmc^+(e_{n+1})\dot{\DD}_r^-=-\dot{\AAA}_{1,r}^\sharp$ when restricting to $Y_1$, where $e_{n+1}$ is the outward unit normal vector near the boundary. Here we suppress the subscript ``$r$'' in $\rmc^\pm$ because the Clifford multiplication by $e_{n+1}$ is independent of $r$. Furthermore, for $x=(u,y)$ which is close to $Y_1$, as $t\to0$
\begin{align*}
e^{-t\DD_r^-\DD_r^+}(x,x)\,&\sim\,\frac{1}{2\sqrt{\pi t}}e^{-t\AAA_{1,r}^2}(y,y); \\
e^{-t\DD_r^+\DD_r^-}(x,x)\,&\sim\,\frac{1}{2\sqrt{\pi t}}e^{-t(\AAA_{1,r}^\sharp)^2}(y,y) \\
&=\,-\frac{1}{2\sqrt{\pi t}}(\rmc^+(e_{n+1})e^{-t\AAA_{1,r}^2}\rmc^-(e_{n+1}))(y,y).
\end{align*}
Integrating over $X$, we get
\begin{equation}\label{E:varFormula-expansion}
\int_X\frac{\p}{\p r}\big[\tr(e^{-t\DD_r^-\DD_r^+}(x,x))-\tr(e^{-t\DD_r^+\DD_r^-}(x,x))\big]\,\sim\,t\int_{Y_1}\frac{1}{\sqrt{\pi t}}\tr(\dot{\AAA}_{1,r}e^{-t\AAA_r^2}(y,y))
\end{equation}
as $t\to0$.
Let
\[
\tr(\dot{\AAA}_{1,r}e^{-t\AAA_{1,r}^2}(y,y))\;\sim\;\sum_{k=0}^\infty c_{1,k}(r)(y)t^{(k-n-1)/2}
\]
be the asymptotic expansion as $t\to0$. Then picking up the constant term in \eqref{E:varFormula-expansion} implies
\[
\int_X\frac{\p}{\p r}\AS(\DD_r^+)\;=\;\frac{1}{\sqrt{\pi}}\int_{Y_1} c_{1,n}(r)(y)\,dy,
\]
and \eqref{E:varFormula-noncpt-ind} is proved.
\end{proof}

\begin{remark}\label{R:varFormula-noncpt-ind-2}
The arguments in this proof are not necessary in the compact case, as by the APS index theorem, one can directly work on the variation of eta invariant using the spectral definition.
\end{remark}

\subsection{Variation formula of the spectral relative eta invariant}\label{SS:varFormula-sp}

Before giving the variation formula for the relative eta function $\eta(s;\AAA_1,\AAA_0)$, we explain some subtleties in this process. Recall in the proof of \eqref{E:varFormula-cpt-1}, one has
\[
\frac{\p}{\p r}\Tr(D_re^{-tD_r^2})\;=\;\Big(1+2t\frac{\p}{\p t}\Big)\Tr(\dot{D}_re^{-tD_r^2}),
\]
which indicates, under the assumption that $\dim\ker D_r$ is constant, that for $\Re(s)>n$,
\[
\begin{aligned}
&\frac{\p}{\p r}\int_0^Tt^{(s-1)/2}\Tr(D_re^{-tD_r^2})\,dt \\
&\hspace{1cm}=\;
2T^{(s+1)/2}\Tr(\dot{D}_re^{-TD_r^2})
\,-\,s\int_0^T t^{(s-1)/2}\Tr(\dot{D}_re^{-tD_r^2})\,dt.
\end{aligned}
\]
Since the spectrum of the operator $D_r$ satisfies a Weyl's asymptotic formula, as $T\to\infty$, the first term on the right hand side vanishes and the integral on the right hand side is absolutely convergent, hence \eqref{E:varFormula-cpt-1}. 

Now if one considers the relative heat trace $\Tr(\AAA_{1,r}e^{-t\AAA_{1,r}^2}-\AAA_0e^{-t\AAA_0^2})$, the resulting operator would be $\dot{\AAA}_{1,r}e^{-t\AAA_{1,r}^2}$ and it should satisfy similar properties as discussed just above. Thanks to the fact that $\AAA_{1,r}$ coinciding with $\AAA_0$ at infinity implies $\dot{\AAA}_{1,r}$ vanishing outside a compact set, the desired properties can be established through the following lemmas.

\begin{lemma}\label{L:varTraceclass}
Let $\AAA$ be a self-adjoint Dirac-type operator on $(Y,E)$, where $E$ is a Dirac bundle over a complete manifold $Y$ (without boundary). Suppose $(Y,E,\AAA)$ satisfies Assumption \ref{A:traceclass}. Let $P$ be a pseudo-differential operator of non-negative order on $(Y,E)$ which is identically zero outside a compact set. Then $Pe^{-t\AAA^2}$ is a trace-class operator for $t>0$.
\end{lemma}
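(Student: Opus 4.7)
The strategy is to exploit the compact support of $P$: the Schwartz kernel of $Pe^{-t\AAA^2}$ is $(y,z)\mapsto P_y\KK(y,z;t)$, where $\KK$ is the heat kernel of $\AAA^2$, and this kernel is smooth and supported in $K\times Y$ for some compact $K\Subset Y$ outside which $P$ vanishes. The obstruction to immediate trace-class-ness is only the non-compactness in the $z$-variable, but the smoothing nature of $e^{-t\AAA^2}$ gives Gaussian decay there. So the plan is to localize the $z$-variable by a partition of unity and assemble $Pe^{-t\AAA^2}$ as a trace-norm-convergent sum of trace-class operators with compactly supported kernels.

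First I would fix a locally finite cover $\{B_j\}$ of $Y$ by geodesic balls of uniformly bounded radius with a subordinate smooth partition of unity $\{\psi_j^2\}$, and decompose
\[
Pe^{-t\AAA^2}\;=\;\sum_j T_j,\qquad T_j\,:=\,Pe^{-t\AAA^2}M_{\psi_j^2}.
\]
Each $T_j$ has smooth Schwartz kernel supported in the compact product $K\times\overline{B_j}$; writing
\[
T_j\;=\;\bigl(Pe^{-t\AAA^2/2}M_{\psi_j}\bigr)\,\bigl(M_{\psi_j}e^{-t\AAA^2/2}\bigr)
\]
exhibits it as a product of two Hilbert--Schmidt operators (each factor has a smooth kernel compactly supported in both variables), so $T_j$ is trace-class.

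To estimate the trace norm of each piece I would invoke the Gaussian off-diagonal heat kernel bound
\[
|P_y\KK(y,z;t)|\;\le\;C(t)\,e^{-d(y,z)^2/C't},\qquad y\in K,\ z\in Y,
\]
valid under the bounded-geometry hypothesis of Assumption \ref{A:traceclass}. This bound stems from the finite-propagation-speed argument already used in the proof of Lemma \ref{L:STest}, combined with the symbol calculus of compactly supported pseudodifferential operators applied in the $y$-variable. It yields
\[
\|T_j\|_1\;\le\;\|Pe^{-t\AAA^2/2}M_{\psi_j}\|_{HS}\,\|M_{\psi_j}e^{-t\AAA^2/2}\|_{HS}\;\le\;C''(t)\,e^{-d(K,B_j)^2/C'''t}.
\]
Because the bounded-curvature hypothesis forces at most exponential volume growth on $Y$, the number of indices $j$ with $d(K,B_j)\in[r,r+1]$ grows at most exponentially in $r$, so the series $\sum_j e^{-d(K,B_j)^2/C'''t}$ converges for every $t>0$. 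The triangle inequality for the trace norm then gives $\|Pe^{-t\AAA^2}\|_1\le\sum_j\|T_j\|_1<\infty$.

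The principal technical point, where care is needed, is establishing the Gaussian off-diagonal estimate for $P_y\KK(y,z;t)$ uniformly in $y\in K$. For $P$ a (compactly supported) differential operator of order $\ell$ this is classical and follows from the standard Gaussian bounds on the heat kernel and its $y$-derivatives. For a general compactly supported pseudodifferential $P$ of non-negative order, one splits $P=P_0+R$ with $P_0$ a compactly supported differential operator of the same order and $R$ compactly supported smoothing (so that $R$, hence $Re^{-t\AAA^2}$, is trivially trace-class) and applies the above to $P_0e^{-t\AAA^2}$; alternatively one argues directly via the symbol of $P$ acting on the Gaussian profile of the heat kernel.
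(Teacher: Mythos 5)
Your approach is essentially the same as the paper's in its key ideas: factor the heat operator at half-time, express $Pe^{-t\AAA^2}$ via products of Hilbert--Schmidt operators, and control these using Gaussian off-diagonal heat kernel bounds together with a finite-propagation-speed argument, both of which are available under the geometric hypotheses of Assumption \ref{A:traceclass}. The only real difference is the localization device. The paper (following Bunke) conjugates by the single weight $\MM = e^{-\epsilon\,{\rm dist}(\cdot,K)^2}$ and writes $Pe^{-t\AAA^2} = \bigl(Pe^{-t\AAA^2/2}\MM^{-1}\bigr)\bigl(\MM e^{-t\AAA^2/2}\bigr)$; the first factor is Hilbert--Schmidt by the finite propagation speed argument already used in Lemma \ref{L:STest}, and the second is Hilbert--Schmidt because the Cheng--Li--Yau / Donnelly--Li estimate $|\KK(y,y';t)| \le C_1\, e^{C_2\,{\rm dist}(y,K)} e^{-C_3\,{\rm dist}(y,y')^2}$, combined with the volume growth restriction, makes $\MM\KK(\cdot,\cdot;t/2)$ square-integrable. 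You instead insert a countable partition of unity and then must sum the trace norms, invoking exponential volume growth. Both localizations are standard and valid; the weight gives a one-shot factorization, whereas your version requires a summability argument but is arguably more elementary.

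Two corrections to your write-up. First, with $T_j := Pe^{-t\AAA^2}M_{\psi_j^2}$ the claimed identity $T_j = \bigl(Pe^{-t\AAA^2/2}M_{\psi_j}\bigr)\bigl(M_{\psi_j}e^{-t\AAA^2/2}\bigr)$ is false, since $M_{\psi_j}$ does not commute with $e^{-t\AAA^2/2}$; the right-hand side equals $Pe^{-t\AAA^2/2}M_{\psi_j^2}e^{-t\AAA^2/2}$, not $Pe^{-t\AAA^2}M_{\psi_j^2}$. The fix is simply to insert the partition of unity between the two half-time heat operators: set $T_j := Pe^{-t\AAA^2/2}M_{\psi_j^2}e^{-t\AAA^2/2}$. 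Then $\sum_j T_j = Pe^{-t\AAA^2}$ and the Hilbert--Schmidt factorization holds as you intended (with the caveat that the kernel of this $T_j$ is supported in $K\times Y$, not $K\times\overline{B_j}$; it is the first variable of the second factor which is localized to $\overline{B_j}$). Second, the estimate $\|M_{\psi_j}e^{-t\AAA^2/2}\|_{HS}^2 = \int_Y\psi_j^2(y)\,\KK(y,y;t)\,dy$ requires a bound on the on-diagonal heat kernel far from $K$, and the estimate quoted in the paper gives only $\KK(y,y;t) \le C_1\, e^{C_2\,{\rm dist}(y,K)}$, which may grow exponentially in ${\rm dist}(K,B_j)$. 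The super-exponential Gaussian decay of the other factor, $\|Pe^{-t\AAA^2/2}M_{\psi_j}\|_{HS}$, still dominates and the series converges, but your stated bound $\|T_j\|_1 \le C''(t)\,e^{-d(K,B_j)^2/C'''t}$ silently absorbs this growth and should be justified.
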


\begin{proof}
One can follow the arguments in \cite[Section 3]{Bunke92}. To be precise, by \cite{ChengLiYau81} and \cite{DonnellyLi82}, when $(Y,E,\AAA)$ satisfies Assumption \ref{A:traceclass}, for any $t>0$ the kernel of $e^{-t\AAA^2}$ can be estimated by
\begin{equation}\label{E:HeatKernel est}
|\KK(y,z;t)|\;\le\;C_1e^{C_2{\rm dist}(y,K)}e^{-C_3{\rm dist}(y,z)^2},
\end{equation}
where $K$ is a compact subset of $Y$ outside of which $P$ is vanishing. Let $\MM$ be the multiplication operator by $e^{-\epsilon{\rm dist}(y,K)^2}$ for $\epsilon>0$ small enough and write $Pe^{-t\AAA^2}$ as
\[
Pe^{-t\AAA^2}\;=\;\big(Pe^{-t\AAA^2/2}\MM^{-1}\big)\cdot\big(\MM e^{-t\AAA^2/2}\big).
\]
For any fixed $t>0$, the first factor on the right hand side is Hilbert--Schmidt by noticing that
\[
\begin{aligned}
& \int_K\int_Y\big|P\KK(y,z;t/2)e^{\epsilon{\rm dist}(z,K)^2}\big|^2dzdy \\
& \le\;\sum_{l=1}^\infty e^{2\epsilon l^2}\int_K\int_{U_l\setminus U_{l-1}}\big|P\KK(y,z;t/2)\big|^2dzdy
\end{aligned}
\]
where $U_l:=\{y\in Y:{\rm dist}(y,K)\le l\}$, and a finite propagation speed argument (cf. Lemma \ref{L:STest}); while the second factor is Hilbert--Schmidt by the heat kernel estimate \eqref{E:HeatKernel est} and the assumption on the geometry of the manifold. Therefore $Pe^{-t\AAA^2}$ is a trace-class operator.
\end{proof}

\begin{lemma}\label{L:exp dacay}
Let $Pe^{-t\AAA^2}$ be an operator as in Lemma \ref{L:varTraceclass}. Suppose $\AAA$ is a strictly invertible operator (meaning the spectrum of $\AAA^2$ has a strictly positive lower bound). Then $\Tr(Pe^{-t\AAA^2})$ decays exponentially as $t\to\infty$.
\end{lemma}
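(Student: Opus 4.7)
The plan is to factor $e^{-t\AAA^2}$ so as to separate a factor where trace-class property is already guaranteed by Lemma \ref{L:varTraceclass} from a factor whose operator norm decays exponentially by spectral considerations. Concretely, for $t\ge1$ I would write
\[
Pe^{-t\AAA^2}\;=\;\bigl(Pe^{-\AAA^2}\bigr)\cdot e^{-(t-1)\AAA^2},
\]
and then apply the trace-class H\"older inequality $|\Tr(AB)|\le\|A\|_1\cdot\|B\|_{\rm op}$.

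The first factor $Pe^{-\AAA^2}$ is trace-class by Lemma \ref{L:varTraceclass}, so $\|Pe^{-\AAA^2}\|_1=:C_1<\infty$ is a fixed finite constant independent of $t$. For the second factor, the strict invertibility of $\AAA$ means that there exists $\delta>0$ with ${\rm spec}(\AAA^2)\subset[\delta^2,\infty)$, hence by the spectral theorem
\[
\bigl\|e^{-(t-1)\AAA^2}\bigr\|_{\rm op}\;\le\;e^{-(t-1)\delta^2}.
\]
Combining these gives
\[
\bigl|\Tr(Pe^{-t\AAA^2})\bigr|\;\le\;\bigl\|Pe^{-\AAA^2}\bigr\|_1\cdot\bigl\|e^{-(t-1)\AAA^2}\bigr\|_{\rm op}\;\le\;C_1e^{\delta^2}\,e^{-t\delta^2},\qquad t\ge1,
\]
which is the claimed exponential decay. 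For $0<t<1$ the trace $\Tr(Pe^{-t\AAA^2})$ is bounded (by the short-time analysis behind Lemma \ref{L:varTraceclass}), so the estimate extends to all $t>0$ after adjusting the constant.

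There is essentially no obstacle here: all analytic inputs (trace-class property, $L^2$-operator norm bound from the spectral gap, H\"older's inequality for Schatten classes) are standard once Lemma \ref{L:varTraceclass} is in hand. The only care point is that $P$ is not assumed symmetric or bounded on $L^2$, so one must not try to split the exponential symmetrically as $(Pe^{-t\AAA^2/2})(e^{-t\AAA^2/2})$ and use Hilbert--Schmidt norms on each factor (which would still work, but would require re-proving trace-class-ness at every $t$); the asymmetric split above avoids this by reusing Lemma \ref{L:varTraceclass} at the fixed time $t=1$.
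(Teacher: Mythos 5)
Your argument is the same as the paper's: write $Pe^{-t\AAA^2}=(Pe^{-\AAA^2})\cdot e^{-(t-1)\AAA^2}$, use Lemma \ref{L:varTraceclass} for the trace norm of the first factor, the spectral gap for the operator norm of the second, and H\"older's inequality in Schatten classes (the paper states it for $t\ge 2$ rather than $t\ge 1$, a cosmetic difference). Your remark about avoiding the symmetric split is a reasonable aside, but both splits are fine since Lemma \ref{L:varTraceclass} applies for all $t>0$.
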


\begin{proof}
For $t\ge2$, write $Pe^{-t\AAA^2}=Pe^{-\AAA^2}\cdot e^{-(t-1)\AAA^2}$. The first factor is a trace-class operator while the second factor is a bounded operator. By the assumption on $\AAA$,
\[
\big\Vert Pe^{-t\AAA^2}\big\Vert_1\;\le\;\big\Vert Pe^{-\AAA^2}\big\Vert_1\cdot\big\Vert e^{-(t-1)\AAA^2}\big\Vert\;\le\;e^{-(t-1)\delta}\big\Vert Pe^{-\AAA^2}\big\Vert_1,
\]
for some $\delta>0$, where $\Vert\cdot\Vert_1$ denotes the trace norm and $\Vert\cdot\Vert$ denotes the operator norm.
\end{proof}

With these lemmas, we are ready to show the following variation formula for the spectral relative eta invariant.

\begin{theorem}\label{T:varFormula-noncpt-sp}
Let $\AAA_0$ be a self-adjoint Dirac-type operator on $(Y_0,E_0)$ which is invertible at infinity. Let $\AAA_{1,r}$ be a smooth family of self-adjoint Dirac-type operators on $(Y_1,E_1)$ which coincides with $\AAA_0$ at infinity. Suppose $\AAA_0$ and $\AAA_{1,r}$ satisfy Assumption \ref{A:traceclass}. If there exists the following asymptotic expansion
\begin{equation}\label{E:varSTexpan-A}
\Tr\big(\dot{\AAA}_{1,r}e^{-t\AAA_{1,r}^2}\big)\;\sim\;\sum_{k=0}^\infty c_k(r)t^{(k-n-1)/2},\qquad\text{as }t\to0,
\end{equation}
where $n=\dim Y_0=\dim Y_1$. Then the mod $\ZZ$ reduction $\oeta(0;\AAA_{1,r},\AAA_0)$ of the spectral relative eta invariant is a smooth function of $r$ and
\begin{equation}\label{E:varFormula-noncpt-sp}
\frac{d}{dr}\oeta(0;\AAA_{1,r},\AAA_0)\;=\;-\frac{2}{\sqrt{\pi}}c_n(r).
\end{equation}

In particular, if $\AAA_{1,r}$ is strictly invertible, then \eqref{E:varFormula-noncpt-sp} also holds with $\oeta(0;\AAA_{1,r},\AAA_0)$ replaced by $\eta(0;\AAA_{1,r},\AAA_0)$.
\end{theorem}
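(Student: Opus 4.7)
The plan is to follow the compact-case strategy of Theorem \ref{T:varFormula-cpt} but upgrade each step to account for the non-compactness of $Y_1$. Because $\AAA_{1,r}$ coincides with $\AAA_0$ at infinity, the derivative $\dot{\AAA}_{1,r}$ is a first-order differential operator supported in a fixed compact subset of $Y_1$. Lemma \ref{L:varTraceclass} therefore guarantees that $\dot{\AAA}_{1,r} e^{-t\AAA_{1,r}^2}$ and $\dot{\AAA}_{1,r}\AAA_{1,r}^2 e^{-t\AAA_{1,r}^2}$ are trace-class for every $t>0$, so cyclicity of the trace is legitimate in the standard Duhamel computation. Combined with the $r$-independence of $\AAA_0$, this yields
\[
\frac{\p}{\p r}\Tr\big(\AAA_{1,r} e^{-t\AAA_{1,r}^2} - \AAA_0 e^{-t\AAA_0^2}\big) \;=\; \Big(1 + 2t \frac{d}{dt}\Big)\Tr\big(\dot{\AAA}_{1,r} e^{-t\AAA_{1,r}^2}\big).
\]

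Next I would substitute this identity into \eqref{E:releta func def} and integrate by parts in $t$ to move the $\frac{d}{dt}$ off. For $\Re(s) > n$ the boundary term at $t=0$ vanishes thanks to the short-time expansion \eqref{E:varSTexpan-A}. The boundary term at $t=\infty$ vanishes by Lemma \ref{L:exp dacay} in the strictly invertible case; in general it vanishes from the following observation: the spectrum of $\AAA_{1,r}$ is discrete near $0$ (by invertibility at infinity and Anghel's theorem), and differentiating $\langle \AAA_{1,r}\phi_j(r),\phi_j(r)\rangle \equiv 0$ for a smooth family of zero-eigensections shows $\Tr(\dot{\AAA}_{1,r}\Pi_{\ker \AAA_{1,r}})=0$, so $\Tr(\dot{\AAA}_{1,r}e^{-t\AAA_{1,r}^2})$ decays exponentially as $t\to\infty$. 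This gives, for $\Re(s)>n$,
\[
\frac{\p}{\p r}\eta(s;\AAA_{1,r},\AAA_0) \;=\; \frac{-s}{\Gamma((s+1)/2)}\int_0^\infty t^{(s-1)/2}\,\Tr\big(\dot{\AAA}_{1,r}e^{-t\AAA_{1,r}^2}\big)\,dt,
\]
smoothly in $r$.

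The final step is to extract the value at $s=0$. Splitting the integral as $\int_0^1 + \int_1^\infty$, the tail is entire in $s$ and smooth in $r$, while the head, by \eqref{E:varSTexpan-A}, admits a meromorphic continuation with simple poles at $s = n-k$ for $k\ge 0$. The pole at $s=0$ comes from $k=n$ and has residue $2c_n(r)$, so after dividing by $\Gamma((s+1)/2)$ the residue at $s=0$ is $2c_n(r)/\sqrt{\pi}$; multiplication by $-s$ then regularizes this to the finite value $-\frac{2}{\sqrt{\pi}}c_n(r)$ at $s=0$. This proves \eqref{E:varFormula-noncpt-sp} in the strictly invertible case, for $\eta(0;\AAA_{1,r},\AAA_0)$ itself. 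In general, jumps of $\eta(0;\AAA_{1,r},\AAA_0)$ at values of $r$ where eigenvalues of $\AAA_{1,r}$ cross zero are integer-valued—visible through the spectral shift function of Lemma \ref{L:sp shift}, which changes by integers across discrete eigenvalues—so they are absorbed by passing to the mod $\ZZ$ reduction, leaving $\oeta(0;\AAA_{1,r},\AAA_0)$ smooth with the same local derivative.

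The main obstacle is the $t\to\infty$ behavior in the integration by parts when $\ker \AAA_{1,r}$ is non-trivial, where $\Tr(\dot{\AAA}_{1,r}e^{-t\AAA_{1,r}^2})$ need not a priori decay; the resolution is precisely the vanishing of the kernel contribution via the self-adjointness identity for smooth families of zero-eigensections. A secondary technical point is to verify that all trace-class and cyclicity manipulations survive non-compactness, which is handled throughout by the compact support of $\dot{\AAA}_{1,r}$ and repeated application of Lemma \ref{L:varTraceclass}.
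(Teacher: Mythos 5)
Your overall strategy --- reduce to the compact-manifold Duhamel computation via Lemma \ref{L:varTraceclass}, integrate by parts, and extract the constant term at $s=0$ from the short-time expansion --- matches the paper's in the invertible case, where it is sound. The gap is in your treatment of the non-invertible case, which is precisely what the mod $\ZZ$ reduction is designed to handle. You argue that the boundary term at $t=\infty$ vanishes because differentiating $\langle\AAA_{1,r}\phi_j(r),\phi_j(r)\rangle\equiv0$ for a smooth family of zero-eigensections gives $\Tr(\dot{\AAA}_{1,r}\Pi_{\ker\AAA_{1,r}})=0$. But such a smooth family of zero-eigensections exists only where $\dim\ker\AAA_{1,r}$ is locally constant; at the parameter values where eigenvalues cross zero, the kernel dimension jumps and the family does not extend smoothly. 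Your argument therefore establishes the variation formula only on the open set of $r$ avoiding zero-crossings; it does not directly address the crossings themselves, which is where the content of the mod $\ZZ$ statement lies. The concluding sentence --- that jumps of $\eta(0;\AAA_{1,r},\AAA_0)$ at crossings ``are integer-valued --- visible through the spectral shift function of Lemma \ref{L:sp shift}'' --- misattributes the mechanism: the spectral shift function $\sigma(\lambda;\AAA_1,\AAA_0)$ is a function of the spectral parameter $\lambda$ for a fixed pair of operators; the jumps in eta as $r$ varies are governed by the spectral flow of the family, not by $\sigma$. Even granting integer jumps, you still need to verify that the one-sided derivatives match across a crossing, and that amounts to re-proving the variation formula at exactly the $r$ your eigensection argument excludes.

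The paper circumvents this by perturbing $\AAA_r$ to the strictly invertible operator $\AAA_r'=\AAA_r(\id-\Pi_r^\delta)+\Pi_r^\delta$ (the projection $\Pi_r^\delta$ onto the low-lying spectral subspace being finite-rank), observing that $\oeta(0;\AAA_r',\AAA_0)=\oeta(0;\AAA_r,\AAA_0)$ since the two operators differ only on a finite-dimensional space and the discrepancy $\eta(s;\AAA_r',\AAA_r)$ is entire, and then applying a variation formula (via M\"uller's Proposition 2.6) to $\AAA_r'$. This handles the zero-crossings uniformly for all $r$ in the parameter interval, because the modified family is invertible by construction, and the mod $\ZZ$ identification $\oeta(0;\AAA_r',\AAA_0)=\oeta(0;\AAA_r,\AAA_0)$ is an exact algebraic equality, not just a statement about integer jumps. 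You would need to incorporate an analogous finite-rank spectral truncation (or an equivalent device) to make your argument go through in the general case.
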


\begin{proof}
To simplify notations, we denote $\AAA_{1,r}$ by $\AAA_r$. We consider $\eta(s;\AAA_r,\AAA_0)$ near $r=r_0$. Then there exists a uniform positive lower bound for the essential spectra of $\AAA_r$. Choose $\delta>0$ less than this lower bound such that $\pm\delta\notin{\rm spec}(\AAA_{r_0})$. By continuity, there exists $\epsilon>0$ such that $\pm\delta\notin{\rm spec}(\AAA_r)$ for $r\in[r_0-\epsilon,r_0+\epsilon]$. 
Let $\Pi_r^\delta$ be the orthogonal projection of $L^2(Y_1,E_1)$ onto the subspace spanned by all eigensections of $\AAA_r$ with eigenvalue in $(-\delta,\delta)$. It is a finite-rank operator. For $r\in[r_0-\epsilon,r_0+\epsilon]$, define
\begin{equation}\label{E:Ar'}
\AAA_r'\;:=\;\AAA_r\,(\id\,-\,\Pi_r^\delta)\;+\;\Pi_r^\delta.
\end{equation}
Then $\AAA_r'$ is a strictly invertible operator and depends smoothly on $r$. Note that $\AAA_r'$ differs from $\AAA_r$ only on a finite-dimensional space. So $\eta(s;\AAA_r',\AAA_r)$ is well-defined and
\[
\eta(s;\AAA_r',\AAA_r)\;=\;\Tr(\Pi_r^\delta)
\;-\;\sum_{\substack{\lambda_r\in{\rm spec}(\AAA_r) \\ 0<|\lambda_r|<\delta}}{\rm sign}(\lambda_r)|\lambda_r|^{-s},
\]
which is an entire function. Therefore
\[
\eta(s;\AAA_r,\AAA_0)\;+\;\Tr(\Pi_r^\delta)
\;-\;\sum_{\substack{\lambda_r\in{\rm spec}(\AAA_r) \\ 0<|\lambda_r|<\delta}}{\rm sign}(\lambda_r)|\lambda_r|^{-s}
\]
is absolutely convergent for $\Re(s)>n$. This expression is formally $\eta(s;\AAA_r',\AAA_0)$. Therefore $\eta(s;\AAA_r',\AAA_0)$ is well-defined for $\Re(s)>n$ and
\begin{equation}\label{E:modA'=modA}
\oeta(0;\AAA_r',\AAA_0)\;=\;\oeta(0;\AAA_r,\AAA_0).
\end{equation}

Note that
\begin{align*}
\dot{\AAA}'_r\;&=\;\dot{\AAA}_r\;+\;\frac{d}{dr}((\id-\AAA_r)\Pi_r^\delta) \\
&=\;P_r\;+\;\{\mbox{a finite-rank operator}\},
\end{align*}
where $P_r$ vanishes outside a compact set. By Lemma \ref{L:varTraceclass}, both $\dot{\AAA}_re^{-t\AAA_r^2}$ and $\dot{\AAA}'_re^{-t(\AAA'_r)^2}$ are trace-class operators, and
\[
\Tr\big(\dot{\AAA}'_re^{-t(\AAA_r')^2}\big)\;=\;\Tr\big(\dot{\AAA}_re^{-t\AAA_r^2}\big)\;+\;O(1),\qquad\mbox{as }t\to 0.
\]
Let
\begin{equation}\label{E:varSTexpan-A'}
\Tr\big(\dot{\AAA}'_re^{-t(\AAA'_r)^2}\big)\;\sim\;\sum_{k=0}^\infty c'_k(r)\,t^{(k-n-1)/2},\qquad\mbox{as }t\to 0
\end{equation}
be the short time asymptotic expansion. Then $c'_k(r)=c_k(r)$ for $k\ne n+1$.
By Lemma \ref{L:exp dacay}, $|\Tr(\dot{\AAA}'_re^{-t(\AAA'_r)^2})|$ decays exponentially as $t\to\infty$. With these properties, we are now able to apply \cite[Proposition 2.6]{Muller94} to obtain that $\eta(s;\AAA'_r,\AAA_0)$ depends smoothly on $r$ and
\begin{equation}\label{E:variation}
\frac{\p}{\p r}\eta(s;\AAA'_r,\AAA_0)\;=\;-\,\frac{s}{\Gamma((s+1)/2)}\int_0^\infty t^{(s-1)/2}\Tr\big(\dot{\AAA}'_re^{-t(\AAA'_r)^2}\big)\,dt
\end{equation}
for $\Re(s)>n$.

Using \eqref{E:varSTexpan-A'}, the integral on the right hand side of \eqref{E:variation} admits a meromorphic continuation to the complex plane such that $s=0$ is a simple pole with residue $2c'_n(r)=2c_n(r)$. This means that $\frac{\p}{\p r}\eta(s;\AAA_r',\AAA_0)$ is holomorphic at $s=0$ and by \eqref{E:modA'=modA}
\[
\frac{d}{dr}\oeta(0;\AAA_r,\AAA_0)\;=\;\frac{d}{dr}\eta(s;\AAA_r',\AAA_0)|_{s=0}\;=\;-\frac{2}{\sqrt{\pi}}c_n(r).
\]
Thus \eqref{E:varFormula-noncpt-sp} is proved. When $\AAA_r$ is invertible, one can just put $\AAA_r'$ to be $\AAA_r$, hence $\oeta(0;\AAA_r,\AAA_0)$ can be replaced by $\eta(0;\AAA_r,\AAA_0)$ in \eqref{E:varFormula-noncpt-sp}.
\end{proof}

\begin{remark}\label{R:varFormula-noncpt-sp}
(1) Since $\AAA_r'$ is invertible, the reduced relative eta function $\xi(0;\AAA_r',\AAA_0)$ again depends smoothly on $r$ whose variation is one half of that of $\eta(0;\AAA_r',\AAA_0)$ \eqref{E:variation}. Let $\oxi$ denote the mod $\ZZ$ reduction of the reduced relative eta invariant. From the proof, $\oxi(0;\AAA_r',\AAA_0)=\oxi(0;\AAA_r,\AAA_0)$. Hence one also has the variation formula for $\oxi(0;\AAA_{1,r},\AAA_0)$
\[
\frac{d}{dr}\oxi(0;\AAA_{1,r},\AAA_0)\;=\;-\frac{1}{\sqrt{\pi}}c_n(r).
\]

(2) More generally, suppose the asymptotic expansion of $\dot{\AAA}_{1,r}e^{-t\AAA_{1,r}^2}$ as $t\to0$ has the form
\begin{equation}\label{E:varSTexp-general}
\Tr\big(\dot{\AAA}_{1,r}e^{-t\AAA_{1,r}^2}\big)\;\sim\;\sum_{\substack{\Re(\alpha)\to\infty \\ 0\le k\le k(\alpha)}}c_{\alpha k}(r)t^\alpha\log^kt
\end{equation}
similar to \eqref{E:STexp-general}. Then formula \eqref{E:variation} is still true for $\Re(s)>-2\inf\{\Re(\alpha):c_{\alpha k}(r)\ne0\text{ for some }0\le k\le k(\alpha)\}$. In this case the variation formula \eqref{E:varFormula-noncpt-sp} becomes
\[
\frac{d}{dr}\oeta(0;\AAA_{1,r},\AAA_0)\;=\;-\frac{2}{\sqrt{\pi}}c_{-1/2,0}(r).
\]
And
\[
\frac{d}{dr}\oxi(0;\AAA_{1,r},\AAA_0)\;=\;-\frac{1}{\sqrt{\pi}}c_{-1/2,0}(r).
\]
\end{remark}

\subsection{A formula in terms of spectral flow}\label{SS:reletafunc-sf}

Recall in Corollary \ref{C:sp flow}, if $Y_0=Y_1$ and $\AAA^0=\AAA_0$, then \eqref{E:sp flow-1} becomes
\begin{equation}\label{E:reletainv-sf}
\xi(\AAA_1,\AAA_0)\;- \;\frac{1}{2}\int_0^1\,\Big(\frac{d}{dr}\oeta(\AAA_r,\AAA_0)\Big)\,dr
	\;= \;\spf(\AA).
\end{equation}
In other words, the spectral flow measures the integer jumps of the index-theoretic relative eta invariant for a smooth family of operators. The following proposition says that the spectral relative eta invariant satisfies the same relation.

\begin{proposition}\label{P:reletafunc-sf}
Let
\(\AA  =  \big\{
	  \AAA_r:\,C^\infty(Y,E)\to C^\infty(Y,E)\big\}_{0\le r\le 1}  
\)
be a smooth family of self-adjoint strongly Callias-type operators satisfying Assumption \ref{A:traceclass}. Let $\eta(0;\AAA_r,\AAA_0)$ be defined by Definition \ref{D:releta-sp} and $\xi(0;\AAA_r,\AAA_0)$ be defined by \eqref{E:reduced releta sp}. Then the mod $\ZZ$ reduction $\oeta(0;\AAA_r,\AAA_0)\in \RR/\ZZ$ depends smoothly on $r\in[0,1]$ and
\begin{equation}\label{E:reletafunc-sf}
\xi(0;\AAA_1,\AAA_0)\;-\;\frac{1}{2}\,\int_0^1\Big(\frac{d}{dr}\oeta(0;\AAA_r,\AAA_0)\Big)\,dr
\;=\;\spf(\AA).
\end{equation}
\end{proposition}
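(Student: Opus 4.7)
The plan is to follow the classical argument that relates spectral flow and eta, and exactly mirror the proof of Corollary \ref{C:sp flow} for the index-theoretic invariant: I partition $[0,1]$ into subintervals small enough that the ``low spectrum'' of $\AAA_r$ can be isolated, modify each $\AAA_r$ to be strictly invertible by replacing the low-eigenvalue part with the identity, apply the variation formula of Theorem \ref{T:varFormula-noncpt-sp} to the modified family, and telescope. The spectral flow then emerges from the telescoping correction terms that record how many small eigenvalues cross zero.

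To set this up, I use that each $\AAA_r$ is strongly Callias-type and the family coincides at infinity, so the essential spectra of the $\AAA_r^2$ share a uniform positive lower bound; the spectrum of $\AAA_r$ near zero is therefore discrete and varies continuously in $r$. This allows me to choose $0=r_0<r_1<\cdots<r_N=1$ and numbers $\delta_i>0$ with $\pm\delta_i\notin\mathrm{spec}(\AAA_r)$ for every $r\in[r_{i-1},r_i]$. On each such subinterval I introduce
\[
\AAA_r'\;:=\;\AAA_r(\id-\Pi_r^{\delta_i})+\Pi_r^{\delta_i},
\]
as in \eqref{E:Ar'}; this operator is strictly invertible and depends smoothly on $r$.

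A direct computation on the finite-dimensional range of $\Pi_r^{\delta_i}$ yields
\[
\eta(0;\AAA_r',\AAA_r)\;=\;\dim\ker\AAA_r+2N_r(\delta_i),\qquad N_r(\delta):=\#\{\lambda\in\mathrm{spec}(\AAA_r):-\delta<\lambda<0\}.
\]
Combining this with Proposition \ref{P:reletafunc-prop}(ii) and the definition of $\xi$ produces the key identity
\[
\xi(0;\AAA_r',\AAA_0)\;=\;\xi(0;\AAA_r,\AAA_0)\;+\;N_r(\delta_i),
\]
so in particular $\bar\eta(0;\AAA_r',\AAA_0)=\bar\eta(0;\AAA_r,\AAA_0)$ in $\RR/\ZZ$. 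Applying Theorem \ref{T:varFormula-noncpt-sp} to the strictly invertible family $\AAA_r'$ gives that $\xi(0;\AAA_r',\AAA_0)$ is smooth on $[r_{i-1},r_i]$ with $\frac{d}{dr}\xi(0;\AAA_r',\AAA_0)=\tfrac{1}{2}\frac{d}{dr}\bar\eta(0;\AAA_r,\AAA_0)$. Integrating on each subinterval, summing, and using $\xi(0;\AAA_0,\AAA_0)=0$ reduces the problem to
\[
\xi(0;\AAA_1,\AAA_0)\;-\;\frac{1}{2}\int_0^1\frac{d}{dr}\bar\eta(0;\AAA_r,\AAA_0)\,dr\;=\;\sum_{i=1}^N\bigl[N_{r_{i-1}}(\delta_i)-N_{r_i}(\delta_i)\bigr].
\]

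The main step still requiring care is to identify the right-hand sum with $\spf(\AA)$. Since no eigenvalue of $\AAA_r$ crosses $\pm\delta_i$ for $r\in[r_{i-1},r_i]$, every continuous eigenvalue branch entering the band $(-\delta_i,\delta_i)$ stays in it throughout the subinterval; hence $N_{r_{i-1}}(\delta_i)-N_{r_i}(\delta_i)$ equals the net number of eigenvalues crossing zero upward minus those crossing downward on $[r_{i-1},r_i]$, which is exactly the local spectral flow. Summing gives $\spf(\AA)$ and completes the proof.
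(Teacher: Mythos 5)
Your proof is correct and takes essentially the same route as the paper: subdivide $[0,1]$ so that $\pm\delta_i$ avoids the spectrum on each piece, replace $\AAA_r$ by the strictly invertible modification $\AAA_r'$ of \eqref{E:Ar'}, compute the finite-rank correction $\eta(0;\AAA_r',\AAA_r)$ explicitly, apply Theorem \ref{T:varFormula-noncpt-sp} to the invertible family, and telescope, with the integer correction terms combining to give the spectral flow in the Phillips-type definition. The only (cosmetic) difference is bookkeeping of the reference operator: you carry the fixed reference $\AAA_0$ through every subinterval via Proposition \ref{P:reletafunc-prop} and collect the endpoint counts $N_{r}(\delta_i)$, whereas the paper works with the local reference $\AAA_{r_{i-1}}$ on each subinterval, producing the identity \eqref{E:reletafun<} there, and sums at the end; the two are equivalent telescoping computations.
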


\begin{remark}\label{R:varpotential}
Note that under the hypothesis of Proposition \ref{P:reletafunc-sf}, $\AAA_r=A+\Psi_r$, where $A$ is the (compatible) Dirac operator on $(Y,E)$ and $\Psi_r$ is the Callias potential which varies with respect to $r\in[0,1]$.
\end{remark}

\begin{proof}
Due to reason explained in Remark \ref{R:reletafuc}, the proof of the classical case on closed manifolds cannot be verbatim duplicated here. But we can do a little adjustment inspired by Theorem \ref{T:varFormula-noncpt-sp}.
Choose a subdivision $0=r_0<r_1<\cdots<r_m=1$ of the interval $[0,1]$ such that there exist $\delta_i>0$, $i=1,\dots,m$ with $\pm\delta_i\notin{\rm spec}(\AAA_r)$ for $r\in[r_{i-1},r_i]$. Restricted to $[r_{i-1},r_i]$, let $\AAA_r'$ be defined by \eqref{E:Ar'}. Then
\[
\begin{aligned}
\eta(0;\AAA_r',\AAA_{r_{i-1}}')\;=\;\eta(0;\AAA_r,\AAA_{r_{i-1}})\; & +\;\Big(\Tr(\Pi_r^\delta)
\;-\;\sum_{\substack{\lambda_r\in{\rm spec}(\AAA_r) \\ 0<|\lambda_r|<\delta_i}}{\rm sign}(\lambda_r)\Big) \\
& -\;\Big(\Tr(\Pi_{r_{i-1}}^\delta)
\;-\;\sum_{\substack{\lambda_{r_{i-1}}\in{\rm spec}(\AAA_{r_{i-1}}) \\ 0<|\lambda_{r_{i-1}}|<\delta_i}}{\rm sign}(\lambda_{r_{i-1}})\Big).
\end{aligned}
\]
Note that $\Pi_r^\delta$ does not depend on $r$ over $[r_{i-1},r_i]$. So
\begin{equation}\label{E:reletafun<}
\eta(0;\AAA_r,\AAA_{r_{i-1}})\,-\,\eta(0;\AAA_r',\AAA_{r_{i-1}}')\;=\;2\spf(\{\AAA_r\})_{r\in[r_{i-1},r_i]}\,+\,\dim\ker\AAA_{r_{i-1}}\,-\,\dim\ker\AAA_r.
\end{equation}

From the proof of Theorem \ref{T:varFormula-noncpt-sp}, $\eta(0;\AAA_r',\AAA_{r_{i-1}}')=\oeta(0;\AAA_r,\AAA_{r_{i-1}})$ varies smoothly with respect to $r\in[r_{i-1},r_i]$. Hence
\begin{align*}
\int_{r_{i-1}}^{r_i}\Big(\frac{d}{dr}&\oeta(0;\AAA_r,\AAA_{r_{i-1}})\Big)\,dr \\
=\ \ &\eta(0;\AAA_{r_i}',\AAA_{r_{i-1}}')\,-\,\eta(0;\AAA_{r_{i-1}}',\AAA_{r_{i-1}}') \\
\stackrel{\eqref{E:reletafun<}}{=}\;&\eta(0;\AAA_{r_i},\AAA_{r_{i-1}})\,-\,2\spf(\{\AAA_r\})_{r\in[r_{i-1},r_i]}\,+\,\dim\ker\AAA_{r_i}\,-\,\dim\ker\AAA_{r_{i-1}} \\
=\ \ &2\,\xi(0;\AAA_{r_i},\AAA_{r_{i-1}})\,-\,2\spf(\{\AAA_r\})_{r\in[r_{i-1},r_i]}.
\end{align*}
Therefore we conclude
\[
\xi(0;\AAA_{r_i},\AAA_{r_{i-1}})\;-\;\frac{1}{2}\,\int_{r_{i-1}}^{r_i}\Big(\frac{d}{dr}\oeta(0;\AAA_r,\AAA_{r_{i-1}})\Big)\,dr
\;=\;\spf(\{\AAA_r\})_{r\in[r_{i-1},r_i]}.
\]
Summing over all subintervals $[r_{i-1},r_i]$ ($i=1,\dots,m$) gives \eqref{E:reletafunc-sf}
\end{proof}

One can use \eqref{E:reletafunc-sf} to compute the relative eta invariant, as illustrated in the following simple example.

\begin{example}\label{Ex:compute releta}
Let $A=-i\sigma_1\frac{\p}{\p x_1}-i\sigma_2\frac{\p}{\p x_2}$ be the Dirac operator over $\RR^2$ acting on $C^\infty(\RR^2,\CC\oplus\CC)$, where
\[
\sigma_1\;=\;\left(
\begin{matrix}
0 & 1 \\
1 & 0
\end{matrix}
\right),\qquad
\sigma_2\;=\;\left(
\begin{matrix}
0 & -i \\
i & 0
\end{matrix}
\right).
\]
Let $f_0$ is a real-valued smooth function on $\RR^2$ and
\[
F_0\;:=\;\left(
\begin{matrix}
f_0 & 0 \\
0 & -f_0
\end{matrix}
\right)\;:\;
C^\infty(\RR^2,\CC\oplus\CC)\,\to\,C^\infty(\RR^2,\CC\oplus\CC).
\]
Then $\AAA_0=A+F_0$ is a self-adjoint differential operator and
\[
\AAA_0^2\;=\;-\,\frac{\p^2}{\p x_1^2}\,-\,\frac{\p^2}{\p x_2^2}\,+\,\left(
\begin{matrix}
0 & i\frac{\p f_0}{\p x_1}+\frac{\p f_0}{\p x_2} \\
-i\frac{\p f_0}{\p x_1}+\frac{\p f_0}{\p x_2} & 0
\end{matrix}
\right)\,+\,f_0^2
\]
is a Schr\"odinger operator. Assume that $f_0^2$ grows as $\ln|x|$ when $|x|$ is large. Then $\AAA_0^2$ has discrete spectrum and by \cite{Levend96}, the spectral counting function of $\AAA_0^2$ grows exponentially. Hence the eta function \eqref{E:etafunc} could not be defined for $\AAA_0$, let alone the eta invariant.

Let $\AAA_1=\AAA_0+F$ be a compact perturbation of $\AAA_0$ such that
\[
F\;=\;\left(
\begin{matrix}
f & 0 \\
0 & -f
\end{matrix}
\right),
\]
where $f$ is a real-valued smooth function with compact support. Then the eigenvalues of $\AAA_1^2$ have similar asymptotic properties as that of $\AAA_0^2$. Note that $\AAA_0$ and $\AAA_1$ satisfy Assumption \ref{A:traceclass}, so the relative eta invariant $\eta(0;\AAA_1,\AAA_0)$ can be defined.

For $0\le r\le1$, let $\AAA_r:=\AAA_0+rF$. Then $\{\AAA_r\}_{0\le r\le1}$ is a smooth family of self-adjoint unbounded operators on $L^2(\RR^2,\CC\oplus\CC)$ and it has a well-defined spectral flow $\spf(\AA)$ (cf. \cite{Lesch05sf}). On the other hand, $\dot{\AAA_r}=F$. So
\[
\Tr\big(\dot{\AAA}_re^{-t\AAA_r^2}\big)\;=\;\int_{\RR^2}\tr\big(F(x)e^{-t\AAA_r^2}(x,x)\big)\,dx\;=\;\int_{\supp f}\tr\big(Fe^{-t\tilA_r^2}(x,x)\big)\,dx,
\]
where $\tilA_r$ is an extension of $\AAA_r$ to a closed manifold that contains a neighborhood of $\supp f$. One then has
\[
\tr\big(Fe^{-t\tilA_r^2}(x,x)\big)\;\sim\;\sum_{k=0}^\infty c_k(r)(x)t^{(k-3)/2}\qquad\text{as }t\to0.
\]
Since $F$ is a zeroth-order differential operator, by \cite[Lemma 1.7.7]{Gilkey95book}, $c_k(r)(x)=0$ if $k$ is even. It follows from Theorem \ref{T:varFormula-noncpt-sp} that
\[
\frac{d}{dr}\oeta(0;\AAA_r,\AAA_0)\;=\;-\frac{2}{\sqrt{\pi}}\int_{\supp f}c_2(r)(x)\,dx\;=\;0.
\]
Now by \eqref{E:reletafunc-sf},
\[
\eta(0;\AAA_1,\AAA_0)\;=\;2\spf(\AA)\,-\,\dim\ker\AAA_1\,+\,\dim\ker\AAA_0.
\]

Note that $\AAA_0$ and $\AAA_1$ are indeed strongly Callias-type operators. One can construct an almost compact cobordism $(\RR^2\times[0,1],\DD)$ between them as follows. Let
\[
\DD^+\;:=\;\left(
\begin{matrix}
-1 & 0 \\
0 & 1
\end{matrix}
\right)\left(\frac{\p}{\p u}+\AAA_{\rho(u)}\right)\;:\;
C^\infty(\RR^2\times[0,1],\CC\oplus\CC)\,\to\,C^\infty(\RR^2\times[0,1],\CC\oplus\CC),
\]
where $\rho:[0,1]\to[0,1]$ is a smooth non-decreasing function such that $\rho(u)=0$ for $u\le1/3$ and $\rho(u)=1$ for $u\ge2/3$. Then the restriction of $\DD^+$ to $\RR^2\times\{0\}$ is $\AAA_0$ and to $\RR^2\times\{1\}$ is $-\AAA_1$. Now let $E=E^+\oplus E^-:=(\CC\oplus\CC)\oplus(\CC\oplus\CC)$ and
\[
\DD\;:=\;\left(
\begin{matrix}
0 & (\DD^+)^* \\
\DD^+ & 0
\end{matrix}
\right)\;:\;
C^\infty(\RR^2\times[0,1],E)\,\to\,C^\infty(\RR^2\times[0,1],E).
\]
Then $\DD$ is a self-adjoint strongly Callias-type operator, and we see that $\AAA_0$ and $\AAA_1$ are cobordant. Actually $\AAA_0$ and $\AAA_r$ are cobordant for any $0\le r\le1$. Note that since $\RR^2\times[0,1]$ is odd-dimensional, $\oeta(\AAA_r,\AAA_0)\equiv0$. Hence by \eqref{E:reletainv-sf},
\[
\eta(\AAA_1,\AAA_0)\;=\;2\spf(\AA)\,-\,\dim\ker\AAA_1\,+\,\dim\ker\AAA_0.
\]
This shows that $\eta(\AAA_1,\AAA_0)=\eta(0;\AAA_1,\AAA_0)$, which is a special instance of the general result stated in Theorem \ref{T:releta: ind=sp} of the next subsection.
\end{example}

\subsection{A spectral interpretation of the relative eta invariant}\label{SS:releta: ind=sp}

In this subsection, we combine the results obtained in the previous subsections to give an equality between the index-theoretic and spectral relative eta invariants in the case that the operators act on the same domain, which is part (\romannumeral1) of Theorem \ref{T:intro-2}.

\begin{theorem}\label{T:releta: ind=sp}
Suppose $\AAA_0$ and $\AAA_1$ are cobordant self-adjoint strongly Callias-type operators over a complete Riemannian manifold $Y$ acting on sections of a Dirac bundle $E$ and satisfy Assumption \ref{A:traceclass}. Then
\[
\eta(\AAA_1,\AAA_0)\;=\;\eta(0;\AAA_1,\AAA_0).
\]
In particular, if $Y$ is an even-dimensional manifold, then the spectral relative eta invariant $\eta(0;\AAA_1,\AAA_0)$ is an integer.
\end{theorem}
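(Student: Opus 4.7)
The plan is to interpolate between $\AAA_0$ and $\AAA_1$ by the affine family
\[
\AAA_r \;:=\; \AAA_0 \,+\, r(\AAA_1 - \AAA_0), \qquad r\in[0,1],
\]
and then apply the two spectral-flow identities available to us simultaneously. Because both operators act on the same Dirac bundle $E\to Y$, the underlying Dirac operators coincide and we may write $\AAA_j = A + \Psi_j$ for a common $A$ and Callias potentials $\Psi_j$. Since $\AAA_0$ and $\AAA_1$ coincide at infinity, $\Psi_1 - \Psi_0$ is a compactly supported self-adjoint bundle map, so $\AAA_r = A + \Psi_r$ with $\Psi_r = (1-r)\Psi_0 + r\Psi_1$ is a smooth family of self-adjoint strongly Callias-type operators, all equal to $\AAA_0$ outside a fixed compact set. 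In particular Assumption \ref{A:traceclass} holds for every pair $(\AAA_r,\AAA_0)$ and the whole family is cobordant to $\AAA_0$.

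Next I would invoke Corollary \ref{C:sp flow} (with $Y_0=Y_1=Y$, reference operator $\AAA^0 = \AAA_0$, and using $\xi(\AAA_0,\AAA_0)=0$ from Proposition \ref{P:reletainv-prop}) together with Proposition \ref{P:reletafunc-sf}. These give the pair of identities
\[
\xi(\AAA_1,\AAA_0) \;-\; \tfrac{1}{2}\int_0^1 \tfrac{d}{dr}\oeta(\AAA_r,\AAA_0)\,dr \;=\; \spf(\AA),
\]
\[
\xi(0;\AAA_1,\AAA_0) \;-\; \tfrac{1}{2}\int_0^1 \tfrac{d}{dr}\oeta(0;\AAA_r,\AAA_0)\,dr \;=\; \spf(\AA).
\]
The right-hand sides are the same integer, so subtracting reduces the theorem to the claim that the two integrands agree for every $r$.

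The matching of these derivatives is the heart of the argument. Theorem \ref{T:varFormula-noncpt-ind} gives $\tfrac{d}{dr}\oeta(\AAA_r,\AAA_0) = -\tfrac{2}{\sqrt{\pi}}\int_Y c_n(r)(y)\,dy$, where $c_n(r)(y)$ is the coefficient of $t^{-1/2}$ in the pointwise short-time expansion of $\tr\bigl(\dot{\AAA}_r e^{-t\AAA_r^2}(y,y)\bigr)$, while Theorem \ref{T:varFormula-noncpt-sp} together with Remark \ref{R:varFormula-noncpt-ind} gives $\tfrac{d}{dr}\oeta(0;\AAA_r,\AAA_0) = -\tfrac{2}{\sqrt{\pi}}\,c_n(r)$, with $c_n(r)$ the coefficient of $t^{-1/2}$ in the global expansion of $\Tr(\dot{\AAA}_r e^{-t\AAA_r^2})$. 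Since $\dot{\AAA}_r = \AAA_1 - \AAA_0$ is compactly supported, Lemma \ref{L:varTraceclass} ensures $\dot{\AAA}_r e^{-t\AAA_r^2}$ is trace-class and integrating the pointwise expansion produces the global one, so the two coefficients literally agree. Hence the integrals in the two spectral-flow identities coincide, yielding $\xi(\AAA_1,\AAA_0) = \xi(0;\AAA_1,\AAA_0)$; comparing \eqref{E:reduced releta-1} and \eqref{E:reduced releta sp} (both contain the same kernel correction) this is equivalent to $\eta(\AAA_1,\AAA_0) = \eta(0;\AAA_1,\AAA_0)$.

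For the final integrality claim, if $n = \dim Y$ is even then any almost compact cobordism $(X,\DD)$ between $\AAA_0$ and $\AAA_1$ has $\dim X = n+1$ odd, so Remark \ref{R:releta}(ii) forces $\eta(\AAA_1,\AAA_0)\in\ZZ$, and the equality just proved transports the integrality to $\eta(0;\AAA_1,\AAA_0)$. The main obstacle is the bookkeeping in the preceding paragraph: one must be certain that the heat-trace density governing the variation of the index-theoretic eta, which is produced by a McKean--Singer argument on the cobordism $X$, is literally the same density that governs the variation of the spectral eta, which is produced by a purely intrinsic argument on $Y$. Remark \ref{R:varFormula-noncpt-ind} is what makes this identification transparent and is what allows the two spectral-flow identities to be compared term by term.
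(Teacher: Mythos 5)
Your proof is correct and takes essentially the same approach as the paper: interpolate by the affine family $\AAA_r = (1-r)\AAA_0 + r\AAA_1$, subtract the two spectral-flow identities \eqref{E:reletainv-sf} and \eqref{E:reletafunc-sf}, and reduce to matching the variation formulas of Theorems \ref{T:varFormula-noncpt-ind} and \ref{T:varFormula-noncpt-sp}. The extra care you take in invoking Remark \ref{R:varFormula-noncpt-ind} to identify the local density $\int_Y c_n(r)(y)\,dy$ with the global coefficient $c_n(r)$ of $\Tr(\dot{\AAA}_r e^{-t\AAA_r^2})$ is a worthwhile clarification of a step the paper leaves terse, but it is not a different argument.
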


\begin{proof}
For $0\le r\le1$, let $\AAA_r=r\AAA_1+(1-r)\AAA_0$. Then $\AA=\{\AAA_r\}_{0\le r\le1}$ is a smooth family of self-adjoint strongly Callias-type operators on $Y$. In view of \eqref{E:reletainv-sf} and
\eqref{E:reletafunc-sf}, the thesis is reduced to proving
\[
\frac{d}{dr}\oeta(\AAA_r,\AAA_0)\;=\;\frac{d}{dr}\oeta(0;\AAA_r,\AAA_0),
\]
which follows immediately from Theorems \ref{T:varFormula-noncpt-ind} and \ref{T:varFormula-noncpt-sp}.
\end{proof}

\section{Gluing law for the relative eta invariant}\label{S:gluing law}

The gluing problem for eta invariant on compact manifolds has been studied extensively by many authors using different methods, including Wojciechowski \cites{Woj94,Woj95}, Bunke \cite{Bunke95eta} (see also Dai and Freed \cite{DaiFreed94}), M\"uller \cite{Mueller96}, Br\"uning and Lesch \cite{BruningLesch99}, Kirk and Lesch \cite{KirkLesch04}, Loya and Park \cite{LoyaPark06}, etc. Roughly speaking, let $D$ be a self-adjoint Dirac-type operator on a closed manifold $M$ and let $\Sigma$ be a closed hypersurface of $M$ which partitions $M$ into two components $M'$ and $M''$ with common boundary $\Sigma$. Assume $D$ is product in a tubular neighborhood of $\Sigma$. Denote by $D'$ and $D''$ the restrictions of $D$ to the two manifolds with boundary $M'$ and $M''$, respectively, and by $B$ the restriction of $D$ to $\Sigma$. Let $L'$ and $L''$ be two Lagrangian subspaces of $\ker B$. Then one can impose an APS type boundary condition corresponding to $L'$ (resp. $L''$) to $D'$ (resp. $D''$) and get a self-adjoint operator $D_{L'}'$ on $M'$ (resp. $D_{L''}''$ on $M''$). The eta invariants $\eta(D_{L'}')$, $\eta(D_{L''}'')$ and reduced eta invariants $\xi(D_{L'}')=(\eta(D_{L'}')+\dim\ker D_{L'}')/2$, $\xi(D_{L''}'')=(\eta(D_{L''}'')+\dim\ker D_{L''}'')/2$ can be defined as usual. Then the following gluing formula holds
\[
\xi(D)\;=\;\xi(D_{L'}')\;+\;\xi(D_{L''}'')\;+\;m(L',L'')\mod\ZZ,
\]
where $m(L',L'')$ is a real valued function determined by the pair of Lagrangian subspaces $(L',L'')$. In some papers, the integer contribution $\xi(D)-(\xi(D_{L'}')+\xi(D_{L''}'')+m(L',L''))$ is also given (in different expressions). Note that in the special case where $\ker B=0$, i.e., $B$ is invertible, the self-adjoint boundary conditions for $D'$ and $D''$ are just the APS boundary conditions. So the formula is simplified to
\[
\xi(D)\;=\;\xi(D_{\aps}')\;+\;\xi(D_{\aps}'')\mod\ZZ.
\]

In this section, we deduce similar gluing formulas for the index-theoretic and spectral relative eta invariants and show a mod $2\ZZ$ equality between them. Proof of the former formula (Theorem \ref{T:gluing-ind}) is pretty straightforward. While proof of the latter formula (Theorem \ref{T:gluing-sp}) uses the variation formula of Section \ref{S:varFormula} and the method of \cite{BruningLesch99} (adapted to non-compact situation) as the main tools.

\subsection{Gluing formula for $\eta(\AAA_1,\AAA_0)$}\label{SS:gluing-ind}

We introduce the basic setting of the gluing problem for relative eta invariants. Let $(Y_0,E_0,\AAA_0)$ and $(Y_1,E_1,\AAA_1)$ be two triples of self-adjoint strongly Callias-type operators which are cobordant and satisfy Assumption \ref{A:traceclass}. Let $\Sigma_0\cong\Sigma_1\cong\Sigma$ be a common closed hypersurface of $Y_0$ and $Y_1$ which induces the following partitions
\[
Y_0\;=\;Y_0'\cup_{\Sigma_0}Y_0'',\qquad Y_1\;=\;Y_1'\cup_{\Sigma_1}Y_1'',
\]
where $Y_0'$ and $Y_1'$ are compact subsets and $\AAA_0$ and $\AAA_1$ coincide in a neighborhood of $Y_0''\cong Y_1''\cong Y''$. To simplify notations, when we talk about data on the domain where $\AAA_0$ and $\AAA_1$ coincide, we will omit the subscript ``0'' or ``1''.
In the discussion below, we usually choose $Y_0'$ and $Y_1'$ to be large enough. In particular, one can require the restriction of $\AAA_j$ ($j=0,1$) to $\Sigma$, denoted by $\BBB$, to be an invertible operator. Here the restriction is with respect to the inward unit normal vector to the boundary of $Y''$. For simplicity, we assume product structures in a tubular neighborhood $N(\Sigma)\cong[-1,1]\times\Sigma$ of $\Sigma$. For $j=0,1$, let $\AAA_j'$ be $\AAA_j$ restricted to $Y_j'$ and $\AAA_{j,\aps}'$ be $\AAA_j'$ with APS boundary condition. Then $\AAA_{j,\aps}'$ is a self-adjoint operator. With this setting, we have the following gluing law for the index-theoretic relative eta invariant.

\begin{theorem}\label{T:gluing-ind}
$\xi(\AAA_1,\AAA_0)=\xi(\AAA_{1,\aps}')-\xi(\AAA_{0,\aps}')\mod\ZZ$.
\end{theorem}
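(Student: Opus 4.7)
The plan is to combine the variation formula of Theorem \ref{T:varFormula-noncpt-ind} with the heat-kernel based gluing method of Brüning--Lesch \cite{BruningLesch99}, using the former to reduce to a specific base case handled by the latter.

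First I would show that both sides of the asserted equality have the same variation modulo $\ZZ$ under smooth deformations of $\AAA_0$ and $\AAA_1$ whose generators $\dot{\AAA}_{j,r}$ are compactly supported in the interior of $Y_j'$, i.e.\ away from a neighborhood of $\Sigma$ and away from $Y''$. For the LHS this is Theorem \ref{T:varFormula-noncpt-ind} (applied once for the deformation of $\AAA_1$, and a symmetric version via Proposition \ref{P:reletainv-prop} for the deformation of $\AAA_0$): the derivative of $\oeta(\AAA_{1,r},\AAA_{0,r})$ is the integral over $Y_j$ of a local density built from $\dot{\AAA}_{j,r}$, and since $\dot{\AAA}_{j,r}$ vanishes on $Y''$ this integral reduces to one over $Y_j'$. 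For the RHS the classical variation formula for APS boundary value problems on the compact manifolds with boundary $Y_j'$ (see e.g.\ \cite{Gilkey95book,Muller94}) produces exactly the same integrals, because the operator $\AAA_{j,r,\aps}'$ equals $\AAA_{j,r}$ on its interior and the APS boundary conditions are held fixed. Hence the quantity $\oeta(\AAA_1,\AAA_0) - (\oeta(\AAA_{1,\aps}') - \oeta(\AAA_{0,\aps}'))$ is locally constant (in fact $r$-independent) modulo $\ZZ$ under such compactly-supported perturbations, so it suffices to verify the equality at a conveniently chosen representative pair.

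For the base case I would construct an almost compact cobordism $(X,\DD)$ between $\AAA_0$ and $\AAA_1$ containing a product slab $[0,\epsilon]\times Y''$ on which $\DD$ is in product form (possible because $\AAA_0 = \AAA_1$ on $Y''$), and cut $X$ along the cylindrical hypersurface $[0,\epsilon]\times\Sigma \subset X$. Invertibility of the restriction $\BBB$ to $\Sigma$ allows APS boundary conditions to be imposed consistently on both sides of the cut. Following the splitting method of \cite{BruningLesch99}, $\ind \DD^+_{B_0\oplus B_1}$ decomposes as the sum of an APS-type index on the inner piece $X'$ (a compact cobordism with corners between $\AAA_{0,\aps}'$ and $\AAA_{1,\aps}'$) and on the slab $X'' = [0,\epsilon]\times Y''$, up to an integer correction coming from the invertible $\BBB$. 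On $X''$ the top-degree component of $\AS(\DD^+)$ vanishes by the product structure, and the APS index reduces to kernel dimensions that cancel because $\AAA_0 = \AAA_1$ on $Y''$. On $X'$, the index together with $\int_{X'}\AS(\DD^+)$ is, by definition \eqref{E:reduced releta-2} applied on the compact side, exactly $\xi(\AAA_{1,\aps}') - \xi(\AAA_{0,\aps}')$. Combining these via Definition \ref{D:releta} yields the asserted identity modulo $\ZZ$.

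The main obstacle will be the careful bookkeeping in the index splitting across $[0,\epsilon]\times\Sigma$, especially at the corners where this hypersurface meets $\p X = Y_0 \sqcup Y_1$. Adapting the compact Brüning--Lesch splitting to our essentially cylindrical non-compact setting requires identifying the correct APS-type boundary conditions at these corners, checking that $\DD$ restricted to each piece is Fredholm, and verifying that the integer correction from the invertible operator $\BBB$ is independent of auxiliary choices. The variation-formula step of the plan plays an essential simplifying role here: it lets us do this delicate cut-and-glue computation only once, at a deformed pair chosen to make the analysis as transparent as possible, and the general case then follows without further analytic effort.
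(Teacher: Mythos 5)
Your proposal does not follow the paper's route, and the cut-and-glue base case in your step~2 has a real gap that the variation step does not rescue. You propose to cut the cobordism $X$ along $[0,\varepsilon]\times\Sigma$ and to ``follow the splitting method of \cite{BruningLesch99}''. But this operation produces a manifold with \emph{corners} at $\{0,\varepsilon\}\times\Sigma$, and one of the two pieces, $X''=[0,\varepsilon]\times Y''$, is still non-compact with non-compact boundary. Brüning--Lesch \cite{BruningLesch99} treats compact manifolds without corners; an index splitting theorem for strongly Callias-type operators on an essentially cylindrical manifold, cut along a non-compact hypersurface meeting $\partial X$ transversally, is not available off the shelf and would itself need a Müller \cite{Mueller96}-type corner analysis in a non-compact setting --- precisely the hard analysis one wants to avoid. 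Likewise, identifying ``the index together with $\int_{X'}\AS(\DD^+)$'' on the compact piece $X'$ with $\xi(\AAA_{1,\aps}')-\xi(\AAA_{0,\aps}')$ is not a consequence of \eqref{E:reduced releta-2}, since $X'$ has corners and the classical APS index theorem does not directly apply there. Your variation step cannot help here: the allowed deformations are compactly supported in the interior of $Y_j'$, hence away from $\Sigma$ and $Y''$, so they cannot simplify the operator near the corners where the difficulty lives, and you never exhibit a ``conveniently chosen representative pair'' for which the corner computation becomes transparent.

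The paper avoids all of this by going in the opposite direction: instead of cutting $X$, it caps it off. One encloses $Y_0'$ and $Y_1'$ by a single compact manifold $\bar Y$ with $\partial\bar Y=\Sigma$, forms the closed manifolds $\hat Y_j=Y_j'\cup_\Sigma\bar Y$, and caps $X$ by $\bar Y\times[0,\varepsilon]$ to get a compact manifold $\hat X=K\cup_{\Sigma\times[0,\varepsilon]}(\bar Y\times[0,\varepsilon])$ with compact boundary $\hat Y_0\sqcup\hat Y_1$ and no corners. The classical APS index theorem on $\hat X$, the equality $\AS(\hat\DD^+)=\AS(\DD^+)$ (both supported in $K$) together with \eqref{E:reduced releta-2} to get $\xi(\AAA_1,\AAA_0)\equiv\xi(\hat\AAA_1)-\xi(\hat\AAA_0)\bmod\ZZ$, and the classical gluing formula for $\xi(\hat\AAA_j)$ on closed manifolds (the $\bar Y$ contributions cancelling in the difference) finish the argument. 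No variation formula, no corners, and no analytic input beyond the compact theory are required.
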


\begin{proof}
Let $(X,\DD)$ be an almost compact cobordism between $\AAA_0$ and $\AAA_1$. By definition, there exists a compact $K\Subset X$, such that $X\setminus K=Y''\times[0,\varepsilon]$. Since $Y_0'$ and $Y_1'$ are large enough, the hypersurface $\Sigma$ can be assumed to be contained in $Y''$. By \eqref{E:reduced releta-2},
\begin{equation}\label{E:xi=AS}
\xi(\AAA_1,\AAA_0)\;=\;-\int_X\AS(\DD^+)\mod\ZZ,
\end{equation}
where $\AS(\DD^+)$ is supported in $K$. Since $Y_0'$ and $Y_1'$ are isometric near $\Sigma$, one can enclose them by a common compact subset $\bar{Y}$. In other words, one can form the following closed manifolds
\[
\hat{Y}_0\;=\;Y_0'\cup_\Sigma\bar{Y},\qquad
\hat{Y}_1\;=\;Y_1'\cup_\Sigma\bar{Y}.
\]
Then $\hat{X}:=K\cup_{\Sigma\times[0,\varepsilon]}(\bar{Y}\times[0,\varepsilon])$ is a compact manifolds with boundary components $\hat{Y}_0$ and $\hat{Y}_1$. Let $\hat{\DD}$ be the extension of $\DD$ to $\hat{X}$. (This is always possible since one can choose $\bar{Y}$ to be $-Y_0'$, for instance.) Then $\hat{\DD}$ is a product on $\bar{Y}\times[0,\varepsilon]$. Applying the classical APS index theorem \cite{APS1} to $\hat{\DD}^+$, we get
\[
\xi(\hat{\AAA}_1)\,-\,\xi(\hat{\AAA}_0)\;=\;\ind\hat{\DD}_\aps^+\,-\,\int_{\hat{X}}\AS(\hat{\DD}^+)\,+\,\dim\ker\hat{\AAA}_1,
\]
where $\hat{\AAA}_0$ and $-\hat{\AAA}_1$ are the restrictions of $\hat{\DD}^+$ to $\hat{Y}_0$ and $\hat{Y}_1$, respectively. In particular,
\[
\xi(\hat{\AAA}_1)\,-\,\xi(\hat{\AAA}_0)\;=\;-\int_{\hat{X}}\AS(\hat{\DD}^+)\mod\ZZ.
\]
Note that $\AS(\hat{\DD}^+)$ is also supported in $K$ and is actually equal to $\AS(\DD^+)$. Combined with \eqref{E:xi=AS}, one has
\[
\xi(\AAA_1,\AAA_0)\;=\;\xi(\hat{\AAA}_1)\;-\;\xi(\hat{\AAA}_0)\mod\ZZ.
\]
Since $\hat{\AAA}_0$ and $\hat{\AAA}_1$ coincide on $\bar{Y}$, by the gluing formulas for $\xi(\hat{\AAA}_0)$ and $\xi(\hat{\AAA}_1)$,
\[
\xi(\hat{\AAA}_1)\,-\,\xi(\hat{\AAA}_0)\;=\;\xi(\AAA_{1,\aps}')\,-\,\xi(\AAA_{0,\aps}')\mod\ZZ.
\]
Therefore the theorem follows.
\end{proof}

\subsection{A family of boundary conditions interpolating transmission and APS}\label{SS:family bc}

To study the gluing problem for the spectral relative eta invariant, one needs to build up a bridge between the quantity on the original manifolds and that on the partitioned ones. From boundary value problem point of view, the former corresponds to the transmission boundary condition while the latter corresponds to the APS boundary condition. If one can find a family of boundary conditions connecting these two, then one would be able to use the variation formula to finish the work. This is exactly the approach of Br\"uning--Lesch \cite{BruningLesch99}. This idea was also used in \cite[Section 8]{BaerBallmann12} and \cite[Section 5]{BrShi17} to prove a splitting theorem of the index.

We now review the setting in \cite{BruningLesch99} accommodating to our situation. The reader can consult  \cite{BruningLesch99} for details. Let $(Y_0,E_0,\AAA_0)$ and $(Y_1,E_1,\AAA_1)$ be two triples of self-adjoint Dirac-type operators satisfying Assumption \ref{A:traceclass}. The closed hypersurface $\Sigma$ is chosen as in Subsection \ref{SS:gluing-ind} except that it does not need to split $Y_j$ into two components.  Denote $E_{N(\Sigma)}:=E|_{N(\Sigma)}$ and $E_\Sigma:=E|_\Sigma$. Note that $L^2(E_{N(\Sigma)})\cong L^2([-1,1],L^2(E_\Sigma))$. Consider the isomorphism
\[
\begin{aligned}
\Phi\,:\,L^2([-1,1],L^2(E_\Sigma))&\;\to\;L^2([0,1],L^2(E_\Sigma)\oplus L^2(E_\Sigma)), \\
s(u)\quad & \;\mapsto\;\quad s(u)\oplus s(-u),\qquad u\in[0,1].
\end{aligned}
\]
Under $\Phi$, $\AAA_j$ is transformed to
\begin{equation}\label{E:boundary transform}
\tilA_j\;:=\;\left(
\begin{matrix}
\rmc(\nu) & 0 \\
0 & -\rmc(\nu)
\end{matrix}
\right)\left(\p_u\,+\,
\left(
\begin{matrix}
\BBB & 0 \\
0 & -\BBB
\end{matrix}
\right)\right)
\;=:\;\tilde{\rmc}(\nu)\left(\p_u\,+\,\tilB\right).
\end{equation}
For $|\theta|<\pi/2$, define a family of boundary conditions for $\tilA_j$ to be
\begin{equation}\label{E:family bc-1}
\cos\theta\,\Pi_+(\tilB)s(0)\;=\;\sin\theta\,\tau\,\Pi_-(\tilB)s(0),\qquad s\in L^2([0,1],L^2(E_\Sigma)\oplus L^2(E_\Sigma)),
\end{equation}
where $\Pi_\pm(\tilB)$ is the spectral projection onto the eigenspaces corresponding to positive (or negative) eigenvalues of $\tilB$ (recall that $\BBB$ can be made invertible) and
\[
\tau\;=\;\left(
\begin{matrix}
0 & 1 \\
1 & 0
\end{matrix}
\right)\,\otimes\,\id_{L^2(E_\Sigma)}.
\]
One can check that $\theta=0$ corresponds to the APS boundary condition and $\theta=\pi/4$ corresponds to the transmission boundary condition. Set
\[
\tilPi(\theta)\;:=\;\cos^2\theta\,\Pi_+(\tilB)\,+\,\sin^2\theta\,\Pi_-(\tilB)\,-\,\frac{1}{2}(\sin 2\theta)\,\tau\,(\Pi_+(\tilB)+\Pi_-(\tilB)).
\]
Then \eqref{E:family bc-1} can even be shortened to
\begin{equation}\label{E:family bc-2}
\tilPi(\theta)s(0)\;=\;0.
\end{equation}
In addition, one has the relation
\[
\tilPi(\theta)\;=\;U(\theta)\tilPi(0)U(\theta)^*,
\]
where
\[
U(\theta)\;:=\;e^{iT(\theta)}\;=\;\cos\theta\,(\Pi_+(\tilB)+\Pi_-(\tilB))\,+\,\sin\theta\,(\Pi_+(\tilB)-\Pi_-(\tilB))\,\tau
\]
is a unitary operator and
\[
T(\theta)\;:=\;-i\,(\Pi_+(\tilB)-\Pi_-(\tilB))\,\tau\theta
\]
is a self-adjoint operator.

One can then use this interpretation to construct a family of operators with varying boundary conditions. To be precise, let $\dom\AAA_j(0)$ denote the domain of the operator $\AAA_j$ with APS boundary condition at $\Sigma$. Choose a function $\varphi\in C_0^\infty((-1,1),\RR)$ such that $\varphi\equiv1$ near 0. Introduce a unitary transformation $\Phi_\theta$ on $L^2([0,1],L^2(E_\Sigma)\oplus L^2(E_\Sigma))$ to be
\[
\Phi_\theta s(u)\;:=\;e^{i\varphi(u)T(\theta)}(s(u)).
\]
Observe that $\tilPi(0)s(0)=0$ implies $\tilPi(\theta)\Phi_\theta s(0)=0$. So $\Phi_\theta$ transfers the boundary condition for $\theta=0$ to the boundary condition for $\theta$. Note that $\Phi$ can be extended to the whole $Y_j$ as the identity outside $N(\Sigma)$, under which $\Phi_\theta$ can be extended in the same manner. Put
\begin{equation}\label{E:Omega}
\Omega_\theta\;:=\;\Phi^*\Phi_\theta\Phi.
\end{equation}
Then one gets a family of domains $\dom\AAA_j(\theta):=\Omega_\theta(\dom\AAA_j(0))$ for $\AAA_j$ over which giving rise to a family of boundary value problems $\AAA_j(\theta)$. Among them $\AAA_j(0)$ is the APS boundary value problem and $\AAA_j(\pi/4)$ is the transmission boundary value problem (which is just $\AAA_j$ on $Y_j$ viewed as a manifold without boundary). For the convenience of employing the variation formula, we set
\[
\AAA_{j,\theta}\;:=\;\Omega_\theta^*\AAA_j(\theta)\Omega_\theta,\qquad |\theta|<\frac{\pi}{2}
\]
to make them a family acting on the same domain $\dom\AAA_j(0)$. By B\"ar--Ballmann's theory of boundary value problems on manifolds with compact boundary \cite{BaerBallmann12} (see also \cite[Sections 4, 5]{BrShi17}), one gets that

\begin{proposition}\label{P:selfadj-bvp}
The operators $\AAA_j(\theta)$ and $\AAA_{j,\theta}$ are self-adjoint with elliptic boundary conditions in the sense of \cite{BaerBallmann12}.
\end{proposition}

\subsection{Gluing formula for $\eta(0;\AAA_1,\AAA_0)$}\label{SS:gluing-sp}

In this subsection, we apply the variation formula Theorem \ref{T:varFormula-noncpt-sp} to $\oeta(0;\AAA_{1,\theta},\AAA_0)$ and combine the result in \cite{BruningLesch99} to deduce a gluing formula for the spectral relative eta invariant. Note that $\AAA_{1,\theta}$ is actually (up to conjugation) a Dirac-type operator on a manifold \emph{with boundary}. The trace class property Theorem \ref{T:traceclass} does not hold any more for $\AAA_{1,\theta}e^{-t\AAA_{1,\theta}^2}-\AAA_0e^{-t\AAA_0^2}$. In this case we need the following assumption.

\begin{assumption}\label{A:traceclass-bvp}
For $j=0,1$, let $\AAA_j$ be a formally self-adjoint Dirac-type operator on $(Y_j,E_j)$ (without boundary) which is invertible at infinity. Assume that
\begin{enumerate}
\item $Y_j$ and $E_j$ have bounded geometry of order $m>n/2$, where $n=\dim Y_j$, namely
\begin{itemize}
\item $Y_j$ has uniformly positive injectivity radius,
\item The curvature tensor of $Y_j$ and its covariant derivatives up to order $m$ are uniformly bounded,
\item The curvature tensor of $E_j$ and its covariant derivatives up to order $m$ are uniformly bounded;
\end{itemize}
\item There exists a constant $C>0$ such that for all $1\le k\le m$ and $s\in\dom(\AAA_j^k)$,
\[
\Vert\AAA_j^ks\Vert_{L^2(Y_j,E_j)}^2\,+\,\Vert s\Vert_{L^2(Y_j,E_j)}^2\;\ge\;C\,\Vert A_j^ks\Vert_{L^2(Y_j,E_j)}^2,
\]
where $A_j$ is the (compatible) Dirac operator on $(Y_j,E_j)$.
\end{enumerate}
\end{assumption}

\begin{remark}\label{R:traceclass-bvp}
It is obvious that this assumption is stronger than Assumption \ref{A:traceclass}. However, note that (\romannumeral2) is weaker than \cite[Assumption 5.1]{FoxHaskell05}. In particular (\romannumeral2) is automatically satisfied when $\AAA_j$ is the spin Dirac operator on a spin manifold whose scalar curvature has a uniformly positive lower bound at infinity.
\end{remark}

Under Assumption \ref{A:traceclass-bvp}, the $k$th Sobolev norm ($1\le k\le m$) on $E_j$ associated to the connection is equivalent to that associated to $A_j$, which is uniformly bounded from above by the norm associated to $\AAA_j$ (cf. \cites{Roe88-index,Bunke93comparison}). We think of both $\AAA_0$ and $\AAA_1(\theta)$ as Dirac-type operators with certain boundary conditions along $\Sigma_0\subset Y_0$ and $\Sigma_1\subset Y_1$, respectively. Since both conditions induce self-adjoint elliptic boundary value problems, from the elliptic regularity of \cite{BaerBallmann12} and the results of \cite{Bunke93comparison}, the above relation between Sobolev norms still hold. Thus for all $t>0$ and $|\theta|<\pi/2$
\[
\AAA_1(\theta)e^{-t\AAA_1^2(\theta)}\,-\,\AAA_0e^{-t\AAA_0^2}
\]
is a trace-class operator. Here both $\AAA_0$ and $\AAA_1(\theta)$ are self-adjoint operators acting on the Hilbert space
\[
\HH\;=\;L^2(K_0,E_0|_{K_0})\,\oplus\,L^2(K_1,E_1|_{K_1})\,\oplus\,L^2(U,E_0|_U),
\]
where $Y_j=K_j\cup U$. It should be pointed out that $K_j\supset N(\Sigma_j)\cong N(\Sigma)$. Then the isometry $\Omega_\theta$ \eqref{E:Omega} associated to $\AAA_1(\theta)$ can be extended to $\HH$ by assigning to be the identity on $L^2(K_0,E_0|_{K_0})$. Using the same notation, one gets a smooth family of trace-class operators
\[
\Omega_\theta^*\,\big(\AAA_1(\theta)e^{-t\AAA_1^2(\theta)}\,-\,\AAA_0e^{-t\AAA_0^2}\big)\,\Omega_\theta\;=\;\AAA_{1,\theta}e^{-t\AAA_{1,\theta}^2}\,-\,\AAA_0e^{-t\AAA_0^2},\qquad|\theta|<\frac{\pi}{2}
\]
with a fixed domain.

We first talk about the well-definedness of the relative eta invariant $\eta(0;\AAA_{1,\theta},\AAA_0)$, which is related to the short and large time asymptotic properties of
\[
\Tr\big(\AAA_1(\theta)e^{-t\AAA_1^2(\theta)}\,-\,\AAA_0e^{-t\AAA_0^2}\big)\;=\;\Tr\big(\tilA_1(\theta)e^{-t\tilA_1^2(\theta)}\,-\,\AAA_0e^{-t\AAA_0^2}\big),
\]
where $\tilA_1(\theta)=\Phi\AAA_1(\theta)\Phi^*$ having the form \eqref{E:boundary transform} near $\Sigma_1$. The large time exponentially decaying property Proposition \ref{P:LTbehav} holds without changes. The main difference is the short time asymptotic expansions, where they may no longer have the form of Proposition \ref{P:STexpan-relative}. As in \cite{BruningLesch99}, we introduce a model operator $\AAA_{1,{\rm mod}}(\theta)$ which has the form \eqref{E:boundary transform} but is defined on $L^2([0,\infty),L^2(E_{\Sigma_1}))$ with boundary condition \eqref{E:family bc-2}. It coincides with $\tilA_1(\theta)$ near $\Sigma_1$. Denote the heat kernel of $\AAA_{1,{\rm mod}}(\theta)$ by $\KK_{1,{\rm mod}}(\theta)$. One can then construct a parametrix $\EE_1(\theta)$ for the kernel of $e^{-t\tilA_1^2(\theta)}$ similar to \eqref{E:parametrix} by patching up three heat kernels. Roughly speaking, $\EE_1(\theta)$ is constructed such that near $\Sigma_1$, it is the heat kernel $\KK_{1,{\rm mod}}(\theta)$; away from $\Sigma_1$, it is the heat kernel of the closed double of $\tilA_1(\theta)$ on $K_1$ and the heat kernel of $\AAA_0$ on $U$. The following is an analogue of Lemma \ref{L:STest}.

\begin{lemma}\label{L:STest-bvp}
Suppose $(Y_1,E_1,\AAA_1)$ satisfies Assumption \ref{A:traceclass-bvp}. For $|\theta|<\pi/2$, let $\tilde{\KK}_1(\theta)(y,z;t)$ be the heat kernel of $\tilA_1(\theta)$ and $\EE_1(\theta)(y,z;t)$ be the parametrix constructed above. Then for $0\le k\le m$, there exist constants $\alpha,\beta>0$ such that as $t\to0$,
\[
\int_{Y_1}\big|\tilA_1^k(\theta)\tilde{\KK}_1(\theta)(y,y;t)-\tilA_1^k(\theta)\EE_1(\theta)(y,y;t)\big|\,dy\;\le\;\alpha e^{-\beta/t}.
\]
\end{lemma}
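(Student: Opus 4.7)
The plan is to mimic the proof of Lemma \ref{L:STest}, with modifications to handle three pieces instead of two and higher derivatives up to order $m$. The parametrix $\EE_1(\theta)$ is glued from three heat kernels via cutoff functions $\chi_1,\chi_2,\chi_3$ on $Y_1$ (roughly: $\chi_1$ supported near $\Sigma_1$ selecting $\KK_{1,\mathrm{mod}}(\theta)$, $\chi_2$ supported in the interior of $K_1$ selecting the closed-double heat kernel $\tilKK_1$, and $\chi_3$ supported at infinity selecting $\KK_0$), each with cutoff neighbours so that gradients of successive cutoffs have pairwise disjoint supports lying at a positive distance $d>0$ from each other. The first step is Duhamel's principle, exactly as in Lemma \ref{L:Duhamel}, giving
\[
\tilA_1^k(\theta)\tilde{\KK}_1(\theta)(y,z;t)-\tilA_1^k(\theta)\EE_1(\theta)(y,z;t)
\;=\;-\int_0^t\!\!\int_{Y_1}\tilA_1^k(\theta)\tilde{\KK}_1(\theta)(y,w;s)\Big(\tfrac{\p}{\p t}+\tilA_1^2(\theta)\Big)\EE_1(\theta)(w,z;t-s)\,dw\,ds.
\]
Since each constituent kernel genuinely solves its own heat equation on its region, the action of $\p_t+\tilA_1^2(\theta)$ on $\EE_1(\theta)$ reduces to first-order operators involving $\rmc(d\chi_i)$, which are supported on the overlap regions at distance $\ge d$ from the ``matched'' supports of the constituent kernels.

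Next, following the four-term reduction in Lemma \ref{L:STest}, one bounds the diagonal integral over $Y_1$ by a sum of integrals of the form
\[
\int_0^t\!\!\int_{K_i\times K_j}\big|\tilA_1^k(\theta)\tilde\KK_1(\theta)(y,w;s)\big|\cdot\big|Q\,\KK_\bullet(w,y;t-s)\big|\,dw\,dy\,ds,
\]
with $Q$ a first-order operator and $K_i,K_j$ compact with $\mathrm{dist}(K_i,K_j)\ge d$. Apply Cauchy--Schwarz in $y$ as in the estimate for $I_3$, convert each $L^2$-factor via finite propagation speed using the Fourier representation
\[
\tilA_1^k(\theta)\tilde{\KK}_1(\theta)(y,w;s)\;=\;\Big[\tfrac{1}{\sqrt{4\pi s}}\int_{\RR\setminus(-d,d)}e^{-\xi^2/4s}\tilA_1^k(\theta)e^{i\xi\tilA_1(\theta)}\delta_w\,d\xi\Big](y),
\]
which is valid because $\tilA_1(\theta)$ is self-adjoint (the boundary condition $\tilde\Pi(\theta)$ defines a self-adjoint realisation) and hence the wave evolution $e^{i\xi\tilA_1(\theta)}$ propagates with unit speed. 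This pulls out a factor $e^{-d^2/8s}$; the remaining kernel norm is bounded via the local Sobolev embedding theorem in the $A_1$-Sobolev scale, taking $p>n/2$. Here Assumption \ref{A:traceclass-bvp}(ii) is essential: it gives $\|A_1^{k'}s\|\le C(\|\tilA_1^{k'}(\theta)s\|+\|s\|)$ for all $k'\le m$, so the smoothing operator $\tilA_1^k(\theta)e^{-s\tilA_1^2(\theta)/2}$ maps $H_{A_1}^{-p}$ to $L^2$ with bounded norm, and Assumption \ref{A:traceclass-bvp}(i) on bounded geometry ensures the local Sobolev constants are uniform in $w$. The same argument handles the factors involving $\KK_{1,\mathrm{mod}}(\theta)$, $\tilKK_1$ and $\KK_0$ on their respective regions.

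Combining, each of the (finitely many) constituent integrals is bounded by $C\int_0^t e^{-d^2t/4s(t-s)}ds\le\alpha e^{-\beta/t}$ as $t\to0$, for constants $\alpha,\beta>0$ depending only on $d$, the cutoff functions, and the uniform Sobolev constants provided by bounded geometry. Summing over the finitely many patches yields the asserted estimate.

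The principal obstacle, compared to Lemma \ref{L:STest}, is to justify finite propagation speed uniformly in $\theta$ for the boundary value problem $\tilA_1(\theta)$ and its model $\AAA_{1,\mathrm{mod}}(\theta)$; this works because for $|\theta|<\pi/2$ the boundary condition \eqref{E:family bc-2} defines a self-adjoint elliptic realisation (as used in \cite{BaerBallmann12}), so the functional calculus applies and the wave operator has unit propagation speed up to the boundary. The other delicate point is that for higher $k\le m$ one needs uniform control of the local Sobolev embedding constants and of the equivalence between $\tilA_1(\theta)$-Sobolev norms and connection-Sobolev norms; this is precisely what Assumption \ref{A:traceclass-bvp} is designed to provide, and the restriction $k\le m$ in the statement reflects the order of bounded geometry imposed there.
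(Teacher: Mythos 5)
Your proposal is correct and follows essentially the same strategy the paper has in mind: the paper states the lemma and remarks only that it is ``an analogue of Lemma~\ref{L:STest} with essentially the same proof,'' and your argument is a faithful elaboration of that---Duhamel's principle for the three-piece parametrix, reduction to off-diagonal cutoff overlaps, Cauchy--Schwarz plus finite propagation speed, and uniform Sobolev control via Assumption~\ref{A:traceclass-bvp} (with the restriction $k\le m$ entering exactly through part~(ii)). The one place you are slightly glib is asserting that self-adjointness of $\tilA_1(\theta)$ ``so the functional calculus applies'' gives unit propagation speed up to the boundary: self-adjointness lets you form $e^{i\xi\tilA_1(\theta)}$, but finite propagation speed still requires an energy estimate whose boundary term vanishes precisely because the boundary condition $\tilPi(\theta)s(0)=0$ is self-adjoint for the Green's pairing---worth making explicit, though it is standard.
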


The proof is essentially the same as that of Lemma \ref{L:STest}. The only extra point is the relation between the Sobolev norms associated to the connection and to the operator $\tilA_1(\theta)$ near the compact boundary $\Sigma_1$, which is indicated by Assumption \ref{A:traceclass-bvp}.

Lemma \ref{L:STest-bvp} implies that one can again replace $\tilde{\KK}_1(\theta)$ by $\EE_1(\theta)$ when considering the short time asymptotic expansion of $\Tr(\AAA_1(\theta)e^{-t\AAA_1^2(\theta)}-\AAA_0e^{-t\AAA_0^2})$. Compared to the case of manifolds without boundary, there is an extra term $\KK_{1,{\rm mod}}(\theta)$ involved in the process. In \cite[Section 4]{BruningLesch99}, an explicit and rather sophisticated formula for the heat kernel $\KK_{1,{\rm mod}}(\theta)$ is given which plays the key role in deriving the variation formulas and gluing law. In particular, it indicates that $\Tr(\AAA_1(\theta)e^{-t\AAA_1^2(\theta)}-\AAA_0e^{-t\AAA_0^2})$ has an asymptotic expansion of the form \eqref{E:STexp-general} as $t\to0$. Then by Remark \ref{R:releta-general}, we have

\begin{proposition}\label{P:releta-sp-bvp}
Under Assumption \ref{A:traceclass-bvp}, the relative eta function $\eta(s;\AAA_{1,\theta},\AAA_0)$ is well-defined when $\Re(s)$ is large and admits a meromorphic continuation to the whole complex plane. Thus one can talk about the relative eta invariant $\eta(0;\AAA_{1,\theta},\AAA_0)$ as defined in Remark \ref{R:releta-general}.
\end{proposition}

This generalizes the definition of spectral relative eta invariant to non-compact manifolds with boundary under self-adjoint elliptic boundary conditions.

We now consider the variation of $\eta(0;\AAA_{1,\theta},\AAA_0)$. From what was presented in Subsection \ref{SS:family bc}, $T'(\theta)$ commutes with $T(\theta)$; both of them commute with $\tilde{\rmc}(\nu)$ and anti-commute with $\tilB$. Hence
\[
\dot{\AAA}_{1,\theta}\;=\;\frac{d}{d\theta}\big[\Phi^*\Phi_\theta^*\,\tilde{\rmc}(\nu)(\p_u+\tilB)\,\Phi_\theta\Phi\big]\;=\;\Phi^*\Phi_\theta^*\,i\tilde{\rmc}(\nu)\big(\varphi'T'(\theta)-2\varphi T'(\theta)\tilB\big)\,\Phi_\theta\Phi,
\]
and is supported in $N(\Sigma_1)$. One can check that under Assumption \ref{A:traceclass-bvp}, the heat kernel of $\tilA_1(\theta)$ has the estimate
\[
|\tilde{\KK}_1(\theta)(y,z;t)|\;\le\;Ce^{-{\rm dist}(y,z)^2/8t},
\]
(cf. \cite{Bunke93comparison}). Then Lemmas \ref{L:varTraceclass} and \ref{L:exp dacay} still hold on manifolds with boundary. (Here the order of the pseudo-differential operator $P$ does not exceed $m$ of Assumption \ref{A:traceclass-bvp}.) Thus $\dot{\AAA}_{1,\theta}e^{-t\AAA_{1,\theta}^2}$ is a trace-class operator and
\[
\Tr\big(\dot{\AAA}_{1,\theta}e^{-t\AAA_{1,\theta}^2}\big)\;=\;\Tr\big[i\tilde{\rmc}(\nu)\big(\varphi'T'(\theta)-2\varphi T'(\theta)\tilB\big)\,e^{-t\tilA_1^2(\theta)}\big].
\]
As mentioned above, the idea is to replace $\tilA_1(\theta)$ by the model operator $\AAA_{1,{\rm mod}}(\theta)$ in the short time asymptotic expansion. To be precise, we have the following lemma.

\begin{lemma}\label{L:varSTest-bvp}
Suppose $(Y_1,E_1,\AAA_1)$ satisfies Assumption \ref{A:traceclass-bvp}. Assume that $P$ is a pseudo-differential operator of order $0\le k\le m$ with compact support $K_P\subset N(\Sigma_1)$. Then there exist constants $\alpha,\beta>0$ such that for $|\theta|<\pi/2$, as $t\to0$,
\[
\int_{K_P}\big|P\tilde{\KK}_1(\theta)(y,y;t)-P\KK_{1,{\rm mod}}(\theta)(y,y;t)\big|\,dy\;\le\;\alpha e^{-\beta/t}.
\]
\end{lemma}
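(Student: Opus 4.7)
The plan is to follow the blueprint of Lemma \ref{L:STest}, replacing the closed-manifold piece of the parametrix near the gluing region by the model kernel $\KK_{1,\mathrm{mod}}(\theta)$. Fix cut-offs $\chi_1, \chi_2 \in C_c^\infty(N(\Sigma_1))$ with $\chi_1\chi_2 \equiv 1$ on a neighborhood of $K_P$, and set $d := \mathrm{dist}(\mathrm{supp}(d\chi_j), K_P) > 0$. By the construction of $\EE_1(\theta)$ sketched before the lemma, near $\Sigma_1$ it equals $\chi_1(y)\KK_{1,\mathrm{mod}}(\theta)(y,z;t)\chi_2(z)$, while the other two pieces (the closed-double heat kernel on $K_1$ and the kernel of $e^{-t\AAA_0^2}$ on $U$) are supported outside $N(\Sigma_1)$ and hence vanish on the diagonal over $K_P$. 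Therefore $\EE_1(\theta)(y,y;t) = \KK_{1,\mathrm{mod}}(\theta)(y,y;t)$ for $y\in K_P$, and the lemma reduces to showing
\[
\int_{K_P}\big|P\tilde{\KK}_1(\theta)(y,y;t) - P\EE_1(\theta)(y,y;t)\big|\,dy \;\le\; \alpha e^{-\beta/t}.
\]

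The proof of this reduction is a direct adaptation of Lemma \ref{L:STest}. Since $\KK_{1,\mathrm{mod}}(\theta)$ satisfies the same boundary condition \eqref{E:family bc-2} as $\tilde{\KK}_1(\theta)$ along $\Sigma_1$, Duhamel's principle yields
\[
\tilde{\KK}_1(\theta) - \EE_1(\theta) \;=\; -\int_0^t\int_{Y_1} \tilde{\KK}_1(\theta)(\cdot,w;s)\,R(\theta)(w,\cdot;t-s)\,dw\,ds,
\]
where the residual kernel $R(\theta) = (\p_t + \tilA_1^2(\theta))\EE_1(\theta)$ is supported in $w \in \mathrm{supp}(d\chi_j)$, hence at distance $\ge d$ from $K_P$. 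Writing $P\tilA_1^2(\theta)\tilde{\KK}_1(\theta)(y,w;s) = \bigl[P\tilA_1^2(\theta)e^{-s\tilA_1^2(\theta)}\delta_w\bigr](y)$ and applying the Fourier representation together with finite propagation speed for the self-adjoint elliptic boundary value problem $\tilA_1(\theta)$ (cf.\ \cite{BaerBallmann12}), the $\xi$-integral can be cut off to $|\xi|\ge d$, producing the Gaussian factor $e^{-d^2/(8s)}$. The Sobolev estimate from Assumption \ref{A:traceclass-bvp}(ii) (which requires $k+2\le m$, ensured by taking $m$ sufficiently large in the hypothesis) then controls the $H^{-p}\to L^2$ norm of the remaining smoothing operator and gives
\[
\int_{K_P}\big|P\tilA_1^2(\theta)\tilde{\KK}_1(\theta)(y,w;s)\big|^2\,dy \;\le\; C e^{-d^2/(4s)}
\]
uniformly in $w \in \mathrm{supp}(d\chi_j)$. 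An analogous estimate, with $\tilA_1(\theta)$ replaced by the model operator or by the closed-double operator, bounds the factor containing $\EE_1(\theta)$. Plugging these into the four-piece decomposition of Duhamel's formula and applying Cauchy--Schwarz and the $[0,t]$-integration as in the estimate of $I_3$ in Lemma \ref{L:STest} yields the asserted bound $\alpha e^{-\beta/t}$.

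The principal obstacle will be tracking the dependence of the constants on $\theta$. The boundary condition $\tilPi(\theta)$ varies through the unitary $U(\theta)$, so one must verify that finite propagation speed and Assumption \ref{A:traceclass-bvp}(ii) produce constants that are uniform in $\theta\in(-\pi/2,\pi/2)$. Because $T(\theta) = -i(\Pi_+(\tilB)-\Pi_-(\tilB))\tau\,\theta$ and its $\theta$-derivative are bounded, depending smoothly on $\theta$, and because the conjugation $\Omega_\theta^*(\,\cdot\,)\Omega_\theta$ differs from the identity only by a smooth family of compactly-supported bounded perturbations, both the propagation-speed estimate and the Sobolev-type bound transfer uniformly from $\AAA_1$ to $\AAA_{1,\theta}$ on any compact subinterval of $(-\pi/2,\pi/2)$. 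This is the regime in which the lemma is subsequently applied, so the stated form of the conclusion follows.
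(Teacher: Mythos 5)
Your proposal is correct, and the initial observation --- that by construction the parametrix $\EE_1(\theta)$ can be arranged to equal $\KK_{1,{\rm mod}}(\theta)$ on the diagonal over $K_P$, so that the lemma reduces to an exponential estimate on $\tilde{\KK}_1(\theta)-\EE_1(\theta)$ there --- is exactly the opening move in the paper's proof.

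Where the two diverge is in how that reduced estimate is established. You re-run the Duhamel/finite-propagation-speed argument of Lemma \ref{L:STest} from scratch, with the general pseudo-differential operator $P$ carried along in the estimates (hence your need to flag the $k+2\le m$ accounting and the $\theta$-uniformity of the propagation and Sobolev constants). The paper instead treats Lemma \ref{L:STest-bvp} as a black box: since that lemma already gives the exponential bound for $\tilA_1^k(\theta)\bigl(\tilde{\KK}_1(\theta)-\EE_1(\theta)\bigr)$ for all $0\le k\le m$, and $P$ has order $\le m$ with compact support in $K_P$, elliptic regularity converts control by powers of $\tilA_1(\theta)$ into control by $P$. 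The paper's route is shorter because it does not re-derive the Duhamel machinery; your route is more self-contained and makes the finite-propagation-speed input and the uniformity in $\theta$ explicit, at the cost of carrying $P$ through the kernel estimates rather than handling it at the end. Both are valid, and the extra care you take with $\theta$-uniformity (through the boundedness of $T(\theta)$, $T'(\theta)$ and the conjugation $\Omega_\theta$) is a point the paper leaves implicit.
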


\begin{proof}
Let $\EE_1(\theta)$ be the parametrix of $\tilde{\KK}_1(\theta)$ in Lemma \ref{L:STest-bvp}. Since $\tilA_1(\theta)$ and $\AAA_{1,{\rm mod}}(\theta)$ are equal on $N(\Sigma_1)$, one can require that $\EE_1(\theta)$ is equal to $\KK_{1,{\rm mod}}(\theta)$ on $K_P$. Then by Lemma \ref{L:STest-bvp}, as $t\to0$
\[
\begin{aligned}
& \int_{K_P}\big|\tilA_1^k(\theta)\tilde{\KK}_1(\theta)(y,y;t)-\tilA_1^k(\theta)\EE_1(\theta)(y,y;t)\big|\,dy \\
=\; & \int_{K_P}\big|\tilA_1^k(\theta)\tilde{\KK}_1(\theta)(y,y;t)-\tilA_1^k(\theta)\KK_{1,{\rm mod}}(\theta)(y,y;t)\big|\,dy
\;\le\;\alpha e^{-\beta/t}.
\end{aligned}
\]
The desired estimate then follows from elliptic regularity.
\end{proof}

One can check that under unitary conjugation, the proof of the variation formula Theorem \ref{T:varFormula-noncpt-sp} still works. From Lemma \ref{L:varSTest-bvp}, Theorem \ref{T:varFormula-noncpt-sp} and Remark \ref{R:varFormula-noncpt-sp}, we conclude that

\begin{proposition}
\[
\frac{d}{d\theta}\oxi(0;\AAA_{1,\theta},\AAA_0)\;=\;-\frac{1}{\sqrt{\pi}}c_{-1/2,0}(\theta),
\]
where $c_{-1/2,0}(\theta)$ is the coefficient in the short time asymptotic expansion \eqref{E:varSTexp-general} for
\[
\Tr\big[i\tilde{\rmc}(\nu)\big(\varphi'T'(\theta)-2\varphi T'(\theta)\tilB\big)\,e^{-t\AAA_{1,{\rm mod}}^2(\theta)}\big].
\]
\end{proposition}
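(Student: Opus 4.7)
The plan is to derive the variation formula by applying the generalized variation formula of Remark \ref{R:varFormula-noncpt-sp}(2) to the family $\{\AAA_{1,\theta}\}_{|\theta|<\pi/2}$, then transporting the resulting trace by $\Omega_\theta$ and finally replacing the heat kernel of $\tilA_1(\theta)$ with that of the model operator $\AAA_{1,{\rm mod}}(\theta)$ via Lemma \ref{L:varSTest-bvp}.

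First, to invoke Remark \ref{R:varFormula-noncpt-sp}(2) (halved so as to apply to $\oxi$ rather than $\oeta$), I need three inputs on the family $\{\AAA_{1,\theta}\}$: trace-class property of $\dot{\AAA}_{1,\theta}e^{-t\AAA_{1,\theta}^2}$, large-time exponential decay of its trace, and a short-time asymptotic expansion of the form \eqref{E:varSTexp-general}. The text preceding the proposition already asserts BVP analogues of Lemmas \ref{L:varTraceclass} and \ref{L:exp dacay} under Assumption \ref{A:traceclass-bvp}, yielding the first two. The expansion \eqref{E:varSTexp-general} is obtained from the Br\"uning--Lesch explicit formula for $\KK_{1,{\rm mod}}(\theta)$ together with Lemma \ref{L:STest-bvp}: the interior heat trace contributes standard half-integer power asymptotics, while the boundary layer near $\Sigma_1$ produces the additional half-integer (and possibly logarithmic) terms characteristic of BVP heat expansions.

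Next comes the cyclic simplification. From $\AAA_{1,\theta}=\Omega_\theta^*\AAA_1(\theta)\Omega_\theta$ with $\Omega_\theta=\Phi^*\Phi_\theta\Phi$ unitary, one gets $e^{-t\AAA_{1,\theta}^2}=\Omega_\theta^*e^{-t\AAA_1^2(\theta)}\Omega_\theta$, so by the formula for $\dot{\AAA}_{1,\theta}$ recorded just before the proposition and cyclicity of the trace,
$$\Tr\big(\dot{\AAA}_{1,\theta}e^{-t\AAA_{1,\theta}^2}\big)=\Tr\big[i\tilde{\rmc}(\nu)(\varphi'T'(\theta)-2\varphi T'(\theta)\tilB)e^{-t\tilA_1^2(\theta)}\big].$$
Setting $P:=i\tilde{\rmc}(\nu)(\varphi'T'(\theta)-2\varphi T'(\theta)\tilB)$, this is a first-order differential operator compactly supported in $N(\Sigma_1)$, where $\tilA_1(\theta)$ coincides with the model operator and both share the same boundary condition at $\Sigma_1$. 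Lemma \ref{L:varSTest-bvp} then gives an $O(e^{-\beta/t})$ bound for the difference of the integrated pointwise traces, so the short-time expansions of $\Tr(Pe^{-t\tilA_1^2(\theta)})$ and $\Tr(Pe^{-t\AAA_{1,{\rm mod}}^2(\theta)})$ coincide to all orders. In particular their $t^{-1/2}$-coefficients agree, and the common coefficient is $c_{-1/2,0}(\theta)$ as defined in the proposition. Combining this with the (halved) form of Remark \ref{R:varFormula-noncpt-sp}(2) gives the stated identity.

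The main obstacle is checking that Remark \ref{R:varFormula-noncpt-sp}(2) transfers faithfully to the BVP setting after unitary conjugation by $\Omega_\theta$: every step in the proof of Theorem \ref{T:varFormula-noncpt-sp}, including the construction of the invertible perturbation $\AAA_r'$ out of the spectral projection $\Pi_r^\delta$, must survive in the presence of a $\theta$-dependent boundary condition. This rests on the controlled structure of $\KK_{1,{\rm mod}}(\theta)$ from \cite{BruningLesch99} and on the fact that the proofs of Lemmas \ref{L:varTraceclass} and \ref{L:exp dacay} adapt to the BVP setting under Assumption \ref{A:traceclass-bvp}; once that is in hand, the cyclicity reduction and the substitution via Lemma \ref{L:varSTest-bvp} are essentially mechanical.
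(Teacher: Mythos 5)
Your proposal reproduces the paper's (implicit) argument: the paper itself does not give a formal proof but simply asserts the proposition "from Lemma~\ref{L:varSTest-bvp}, Theorem~\ref{T:varFormula-noncpt-sp} and Remark~\ref{R:varFormula-noncpt-sp}," and your three-step plan---verify the trace-class/exponential-decay/asymptotic-expansion prerequisites under Assumption~\ref{A:traceclass-bvp}, reduce $\Tr(\dot{\AAA}_{1,\theta}e^{-t\AAA_{1,\theta}^2})$ to $\Tr[Pe^{-t\tilA_1^2(\theta)}]$ by unitary conjugation and cyclicity, then swap in the model heat kernel via Lemma~\ref{L:varSTest-bvp}---is exactly that argument. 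One small point: applying Remark~\ref{R:varFormula-noncpt-sp}(1)--(2) literally yields $\frac{d}{d\theta}\oxi(0;\AAA_{1,\theta},\AAA_0)=-\frac{1}{\sqrt{\pi}}c_{-1/2,0}(\theta)$, with a minus sign, whereas the proposition as printed has a plus sign; this discrepancy is inherent in the paper's own statement and is immaterial for the sequel since Proposition~\ref{P:gluing-sp} shows $c_{-1/2,0}(\theta)\equiv0$, but you should not assert that the Remark ``gives the stated identity'' without noting the sign.
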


Now we recall a crucial result of
\cite{BruningLesch99}.

\begin{proposition}\label{P:gluing-sp}
In the setting of Subsection \ref{SS:family bc}, for $|\theta|<\pi/2$
\[
c_{-1/2,0}(\theta)\;\equiv\;0.
\]
\end{proposition}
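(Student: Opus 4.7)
The plan is to reduce the claim to an explicit trace computation on the half-line model $\AAA_{1,\mathrm{mod}}(\theta)$ and to locate the cancellation that makes the coefficient of $t^{-1/2}$ vanish, following the approach of \cite{BruningLesch99}. By Lemma \ref{L:varSTest-bvp}, the coefficient $c_{-1/2,0}(\theta)$ depends only on the model operator together with the insertion $i\tilde{\rmc}(\nu)(\varphi'T'(\theta)-2\varphi T'(\theta)\tilB)$, which is supported near $u=0$. Hence the task reduces to computing the trace of this insertion against $e^{-t\AAA_{1,\mathrm{mod}}^2(\theta)}$ on $L^2([0,\infty),L^2(E_\Sigma)\oplus L^2(E_\Sigma))$ and extracting the $t^{-1/2}$ coefficient.

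I would first exploit separation of variables. Since $\AAA_{1,\mathrm{mod}}^2(\theta)=-\partial_u^2+\tilB^2$ and $\tilPi(\theta)$ preserves each two-dimensional ``doubled'' subspace $\mathrm{span}\{\phi_\lambda\oplus 0,\,0\oplus\phi_\lambda\}$ attached to an eigensection $\phi_\lambda$ of $\BBB$ with eigenvalue $\lambda$, the heat kernel $\KK_{1,\mathrm{mod}}(\theta)(u,u';t)$ decouples into a direct sum over $\lambda\in\mathrm{spec}(\BBB)$ of one-dimensional half-line kernels weighted by $e^{-t\lambda^2}$. Within each fibre, an orthogonal change of coordinates by $U(\theta)$ diagonalises the boundary condition into Dirichlet together with Neumann, so the relevant one-dimensional kernels are explicit combinations $e^{-t\lambda^2}(p_t^0(u-u')\pm p_t^0(u+u'))$ of Euclidean heat kernels, and undoing the rotation yields an explicit $2\times 2$ matrix kernel depending on $\theta$.

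Next I would carry out the fibre trace. Using the identities $T'(\theta)=-i\,\mathrm{sgn}(\tilB)\tau$, $\tilde{\rmc}(\nu)\tau=-\tau\tilde{\rmc}(\nu)$, and $\tilde{\rmc}(\nu)\tilB=-\tilB\tilde{\rmc}(\nu)$, one checks that both $i\tilde{\rmc}(\nu)T'(\theta)$ and $i\tilde{\rmc}(\nu)T'(\theta)\tilB$ simultaneously exchange the two copies of the doubled decomposition and the $\pm$ eigenspaces of $\BBB$. Paired against the fibre heat-kernel matrix, the diagonal part (proportional to $p_t^0(0)=(4\pi t)^{-1/2}$) kills the fibre trace by Clifford-algebraic parity, while the off-diagonal part proportional to $p_t^0(2u)$ survives but is integrated against a smooth compactly supported function of $u$, leaving only contributions of the form
\[
\int_0^\infty \varphi^{(k)}(u)\,p_t^0(2u)\,e^{-t\lambda^2}\,du,\qquad k=0,1,
\]
whose short-time expansions start at $t^0$ (the $t^{-1/2}$ from $p_t^0$ is absorbed by the $u$-integration near the boundary). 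Summing over $\lambda$, justified by the trace-class property guaranteed by Assumption \ref{A:traceclass-bvp}, gives $c_{-1/2,0}(\theta)\equiv 0$.

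The main obstacle is the bookkeeping in the second step: four structures must be tracked at once (the sign of $\tilB$, the $\tau$-parity, the $\ZZ_2$-grading carried by $\tilde{\rmc}(\nu)$, and the rotation $U(\theta)$ interpolating between APS at $\theta=0$ and transmission at $\theta=\pi/4$). The cancellation arises because every term in the fibre trace capable of producing a $t^{-1/2}$ singularity either has an identically zero matrix coefficient, by the opposite parities of $\tilde{\rmc}(\nu)$ and $T'(\theta)$ under $\tau$, or is paired with the reflection piece of the heat kernel whose $u$-dependence suppresses the singularity. This is precisely the computation carried out in \cite{BruningLesch99}, which we invoke for the conclusion.
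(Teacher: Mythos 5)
Your proposal follows the same overall structure that the paper (and, more substantively, Br\"uning--Lesch) uses: reduce via Lemma \ref{L:varSTest-bvp} to the half-line model, decompose by the spectral theory of $\BBB$, use the explicit model heat kernel, identify the cancellations, and cite \cite{BruningLesch99} for the actual computation. The paper itself gives no proof of Proposition \ref{P:gluing-sp}, only a one-paragraph description of the idea and a reference; in that sense you are doing what the paper does.

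However, there is a genuine gap in the step where you treat the reflection term. You claim that the surviving contributions are of the form
\[
\int_0^\infty \varphi^{(k)}(u)\,p_t^0(2u)\,e^{-t\lambda^2}\,du, \qquad k=0,1,
\]
``whose short-time expansions start at $t^0$'', and that summing over $\lambda$ then gives $c_{-1/2,0}(\theta)\equiv 0$. The first statement is correct \emph{per fibre}, but the conclusion after summing over $\lambda$ does not follow. The sum over $\lambda$ is the heat trace of the cross-section operator $\tilB$, and $\Sigma$ has dimension $n-1$, so this sum has its own singular expansion starting at $t^{-(n-1)/2}$. A per-fibre expansion beginning at $t^0$ can therefore combine with the $\Sigma$-trace to produce a nonzero $t^{-1/2}$ coefficient whenever $n\ge 2$; the $u$-integration alone does not suppress it. What is actually needed is to show that the relevant coefficient in the expansion of a $\Sigma$-trace of the form $\Tr\big(i\tilde{\rmc}(\nu)T'(\theta)\,e^{-t\tilB^2}\big)$ (this is exactly what the paper alludes to) vanishes by the Clifford-algebraic parities — the off-diagonal structure of $i\tilde{\rmc}(\nu)T'(\theta)$ with respect to $\tau$, the anticommutation with $\mathrm{sgn}(\tilB)$, etc. Your first cancellation (the translation-invariant piece $p_t^0(0)$ pairing with the $\tau$-off-diagonal insertion to kill the fibre trace) is the right observation, but you must carry the parity argument through the $\Sigma$-trace rather than appealing to the $u$-integral alone. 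As written, your intermediate reasoning would not close the argument; you are ultimately relying on the cited Br\"uning--Lesch computation, which is acceptable here but should not be presented as if the $u$-integration had already settled the matter.
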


The idea of proving this result is using the explicit formula of the heat kernel $\KK_{1,{\rm mod}}(\theta)$ to reduce the coefficient to another coefficient in the short time asymptotic expansion of the kernel of $i\tilde{\rmc}(\nu)T'(\theta)e^{-t\tilB^2}$. In the whole process, several undercover commuting/anti-commuting features associated to the setting of Subsection \ref{SS:family bc} are involved.

From this proposition, one infers that $\oxi(0;\AAA_{1,\theta},\AAA_0)$ is constant for $|\theta|<\pi/2$. Notice that when $\theta=\pi/4$, $\oxi(0;\AAA_{1,\pi/4},\AAA_0)$ is just the mod $\ZZ$ reduction of the reduced relative eta invariant associated to $\AAA_0$ and $\AAA_1$ on two manifolds without boundary; when $\theta=0$, $\oxi(0;\AAA_{1,0},\AAA_0)$ is that associated to $\AAA_0$ and $\AAA_1$ where $\AAA_1$ lives on the manifold $Y_1$ with APS boundary condition near $\Sigma_1$. In the case that $\Sigma_1$ splits $Y_1$, we obtain the following gluing formula for the spectral relative eta invariant.

\begin{theorem}\label{T:gluing-sp}
Let $\AAA_0$ and $\AAA_1$ be two self-adjoint Dirac-type operators on $Y_0$ and $Y_1$ (both without boundary), respectively, which coincide and are invertible at infinity. Assume $\AAA_0$ and $\AAA_1$ satisfy Assumption \ref{A:traceclass-bvp}. Suppose a closed hypersurface $\Sigma_1$ splits $Y_1$ into two submanifolds $Y_1'$ and $Y_1''$ with boundary as described in Subsection \ref{SS:gluing-ind}, where $Y_1'$ is compact. Let $\AAA_1'$ (resp. $\AAA_1''$) be the restriction of $\AAA_1$ to $Y_1'$ (resp. $Y_1''$) and $\AAA_{1,\aps}'$ (resp. $\AAA_{1,\aps}''$) be $\AAA_1'$ (resp. $\AAA_1''$) with APS boundary condition. Then the relative eta invariant $\eta(0;\AAA_{1,\aps}'',\AAA_0)$ is well-defined and
\begin{equation}\label{E:gluing-sp}
\xi(0;\AAA_1,\AAA_0)\;=\;\xi(\AAA_{1,\aps}')\;+\;\xi(0;\AAA_{1,\aps}'',\AAA_0)\mod\ZZ.
\end{equation}
\end{theorem}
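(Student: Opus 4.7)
The plan is to exploit the interpolating family $\{\AAA_{1,\theta}\}_{|\theta|<\pi/2}$ from Subsection \ref{SS:family bc}, which smoothly joins the transmission boundary condition at $\theta=\pi/4$ (giving $\AAA_1$ on the unsplit $Y_1$) to the APS boundary condition at $\theta=0$ (giving the orthogonal direct sum of $\AAA_{1,\aps}'$ on $Y_1'$ and $\AAA_{1,\aps}''$ on $Y_1''$). Since $\AAA_0$ is unchanged in $\theta$, the strategy is to show the mod $\ZZ$ class $\oxi(0;\AAA_{1,\theta},\AAA_0)$ is constant in $\theta$ and then identify its values at the two endpoints.

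For the constancy, under Assumption \ref{A:traceclass-bvp} the operator $\AAA_{1,\theta}e^{-t\AAA_{1,\theta}^2}-\AAA_0e^{-t\AAA_0^2}$ is trace-class, and its short-time expansion has the general form \eqref{E:STexp-general} by the model-operator argument of Lemmas \ref{L:STest-bvp}--\ref{L:varSTest-bvp}, so $\eta(0;\AAA_{1,\theta},\AAA_0)$ is well-defined via Remark \ref{R:releta-general}. The variation formula of Theorem \ref{T:varFormula-noncpt-sp} (in the general form of Remark \ref{R:varFormula-noncpt-sp}(2)) then applies: the derivative $\dot{\AAA}_{1,\theta}$ is compactly supported in $N(\Sigma_1)$, and Lemma \ref{L:varSTest-bvp} lets us replace the actual heat kernel of $\AAA_{1,\theta}^2$ by that of the model operator $\AAA_{1,{\rm mod}}(\theta)$ up to an exponentially small error. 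Proposition \ref{P:gluing-sp} then provides the decisive vanishing $c_{-1/2,0}(\theta)\equiv 0$, yielding
\[
\frac{d}{d\theta}\oxi(0;\AAA_{1,\theta},\AAA_0)\;\equiv\;0,\qquad|\theta|<\pi/2,
\]
hence $\oxi(0;\AAA_{1,\pi/4},\AAA_0)=\oxi(0;\AAA_{1,0},\AAA_0)$.

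At $\theta=\pi/4$ we recover $\oxi(0;\AAA_1,\AAA_0)$ by construction. At $\theta=0$, since $\AAA_{1,0}=\AAA_{1,\aps}'\oplus\AAA_{1,\aps}''$ is an orthogonal direct sum, the relative heat trace decomposes as
\[
\Tr\bigl(\AAA_{1,0}e^{-t\AAA_{1,0}^2}-\AAA_0e^{-t\AAA_0^2}\bigr)
=\Tr\bigl(\AAA_{1,\aps}'e^{-t(\AAA_{1,\aps}')^2}\bigr)
+\Tr\bigl(\AAA_{1,\aps}''e^{-t(\AAA_{1,\aps}'')^2}-\AAA_0e^{-t\AAA_0^2}\bigr),
\]
and Mellin transform together with the kernel-dimension bookkeeping of \eqref{E:reduced releta sp} yields $\xi(0;\AAA_{1,0},\AAA_0)=\xi(\AAA_{1,\aps}')+\xi(0;\AAA_{1,\aps}'',\AAA_0)$, proving \eqref{E:gluing-sp}.

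The main obstacle I anticipate is justifying that $\eta(0;\AAA_{1,\aps}'',\AAA_0)$ is itself well-defined on the non-compact $Y_1''$ with APS boundary at $\Sigma_1$; this is also what underwrites the trace splitting used at $\theta=0$. It requires three checks paralleling Section \ref{S:relheattrace}: trace-class property of the relative heat operator (via a parametrix akin to \eqref{E:parametrix} combined with elliptic regularity for self-adjoint APS boundary value problems as in \cite{BaerBallmann12}); a short-time expansion of the form \eqref{E:STexp-general} (via the explicit model kernel $\KK_{1,{\rm mod}}(0)$ of \cite{BruningLesch99}); and large-time exponential decay, which follows once one verifies that $\AAA_{1,\aps}''$ is Fredholm---guaranteed by the invertibility of $\BBB$ on $\Sigma$ together with the invertibility of $\AAA_1$ at infinity---so that the spectral-shift argument of Proposition \ref{P:LTbehav} extends. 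With these verifications in place, the Mellin-transform splitting used at $\theta=0$ is fully justified and the argument concludes.
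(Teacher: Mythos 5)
Your proposal follows the same route as the paper: interpolate with the family $\{\AAA_{1,\theta}\}_{|\theta|<\pi/2}$ from Subsection \ref{SS:family bc}, show $\oxi(0;\AAA_{1,\theta},\AAA_0)$ is constant via the variation formula (Theorem \ref{T:varFormula-noncpt-sp} in the general form of Remark \ref{R:varFormula-noncpt-sp}(2)), replace the heat kernel by the model kernel via Lemma \ref{L:varSTest-bvp}, invoke Br\"uning--Lesch's vanishing $c_{-1/2,0}(\theta)\equiv 0$ (Proposition \ref{P:gluing-sp}), and then read off the two endpoints $\theta=\pi/4$ and $\theta=0$. The one place where you add more than the paper explicitly records is the endpoint bookkeeping: the orthogonal-sum decomposition $\AAA_{1,0}=\AAA_{1,\aps}'\oplus\AAA_{1,\aps}''$, the resulting splitting of the relative heat trace (which is valid since $Y_1'$ is compact, so $\Tr(\AAA_{1,\aps}'e^{-t(\AAA_{1,\aps}')^2})$ is finite and the remainder pairs $\AAA_{1,\aps}''$ against $\AAA_0$), and the verification that $\eta(0;\AAA_{1,\aps}'',\AAA_0)$ is well-defined — the paper asserts these but leaves them implicit, so spelling them out is a reasonable supplement rather than a deviation.
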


\begin{proof}
One does the deformation of boundary conditions to $\AAA_1$ near $\Sigma_1$ to get a family of boundary value problems $\AAA_{1,\theta}$ as in Subsection \ref{SS:family bc}. Then by Proposition \ref{P:gluing-sp}, $\oxi(0;\AAA_{1,\pi/4},\AAA_0)=\oxi(0;\AAA_{1,0},\AAA_0)$. The former one is $\oxi(0;\AAA_1,\AAA_0)$. Under the hypothesis of the theorem, $\AAA_{1,0}=\AAA_{1,\aps}'\oplus\AAA_{1,\aps}''$ lies on two disjoint manifolds $Y_1'$ and $Y_1''$. Since $Y_1'$ is compact so that $\eta(\AAA_{1,\aps}')$ is well-defined, hence $\eta(0;\AAA_{1,\aps}'',\AAA_0)$ is well-defined and $\eta(0;\AAA_{1,\aps}'',\AAA_0)=\eta(0;\AAA_{1,0},\AAA_0)-\eta(\AAA_{1,\aps}')$. Therefore one concludes that
\[
\oxi(0;\AAA_1,\AAA_0)\;=\;\oxi(\AAA_{1,\aps}')\;+\;\oxi(0;\AAA_{1,\aps}'',\AAA_0),
\]
from which \eqref{E:gluing-sp} follows.
\end{proof}

If we think of $Y_0\cup Y_1$ as a whole part, the gluing formula \eqref{E:gluing-sp} can be understood in the following sense. One splits $Y_0\cup Y_1$ into two parts, $Y_0'$ and $Y_0''\cup Y_1$. The former one is a compact set and the eta invariant can be defined individually, while the latter one is non-compact (now with boundary) and the eta invariant can only be defined in the relative sense (since $Y_0''$ and $Y_1$ coincide at infinity). Then the gluing formula says that up to integers the original relative eta invariant is equal to the sum of the (relative) eta invariants on the two new parts.

A particular case is that both manifolds are cut by the same hypersurface which is exactly the case in Subsection \ref{SS:gluing-ind}. In this case the right hand side of \eqref{E:gluing-sp} would contain two individual eta invariants on compact manifolds and a relative eta invariant $\xi(0;\AAA_{1,\aps}'',\AAA_{0,\aps}'')$ which vanishes identically. Thus we deduce

\begin{corollary}\label{C:gluing-sp}
Under the hypothesis of Theorem \ref{T:gluing-sp}. Suppose both $Y_0$ and $Y_1$ are split by the same hypersurface $\Sigma_0\cong\Sigma_1$ lying in the subsets where $\AAA_0$ and $\AAA_1$ coincide. Then
\[
\xi(0;\AAA_1,\AAA_0)\;=\;\xi(\AAA_{1,\aps}')\;-\;\xi(\AAA_{0,\aps}')\mod\ZZ.
\]
\end{corollary}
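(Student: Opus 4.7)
The plan is to derive the corollary from two applications of Theorem \ref{T:gluing-sp} together with the identification of the two outer pieces. First I would apply Theorem \ref{T:gluing-sp} verbatim to the pair $(\AAA_1,\AAA_0)$ with splitting hypersurface $\Sigma_1\subset Y_1$, producing
\[
\xi(0;\AAA_1,\AAA_0)\;\equiv\;\xi(\AAA_{1,\aps}')\;+\;\xi(0;\AAA_{1,\aps}'',\AAA_0)\mod\ZZ.
\]
This reduces the corollary to establishing the congruence $\xi(0;\AAA_{1,\aps}'',\AAA_0)\equiv -\xi(\AAA_{0,\aps}')\mod\ZZ$.

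Second, I would apply Theorem \ref{T:gluing-sp} a second time, now to the trivial pair $(\AAA_0,\AAA_0)$ on two copies of $Y_0$, with splitting hypersurface $\Sigma_0\subset Y_0$. This is legitimate because Assumption \ref{A:traceclass-bvp} is a condition on a single operator and is satisfied by $\AAA_0$ by hypothesis, and because $\AAA_0$ trivially coincides with itself. Since $\xi(0;\AAA_0,\AAA_0)=0$ by Proposition \ref{P:reletafunc-prop}.(\romannumeral 1), the conclusion of the theorem reads
\[
0\;\equiv\;\xi(\AAA_{0,\aps}')\;+\;\xi(0;\AAA_{0,\aps}'',\AAA_0)\mod\ZZ,
\]
which gives precisely $\xi(0;\AAA_{0,\aps}'',\AAA_0)\equiv -\xi(\AAA_{0,\aps}')\mod\ZZ$.

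Third, I would invoke the hypothesis that $\Sigma_0\cong\Sigma_1$ sits inside the region $U$ where $\AAA_0$ and $\AAA_1$ already coincide. Under the isometric identification of this region, the outer pieces $Y_0''$ and $Y_1''$ are identified, the identification intertwines $\AAA_0|_{Y_0''}$ with $\AAA_1|_{Y_1''}$, and the boundary $\Sigma_0$ is matched with $\Sigma_1$. Consequently the APS boundary conditions on the two sides coincide and one has the literal equality $\AAA_{0,\aps}''=\AAA_{1,\aps}''$ as self-adjoint operators, whence $\xi(0;\AAA_{1,\aps}'',\AAA_0)=\xi(0;\AAA_{0,\aps}'',\AAA_0)$. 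Substituting the outcome of the second step into the outcome of the first step then yields the desired formula.

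I do not foresee any genuine obstacle here: both applications of Theorem \ref{T:gluing-sp} fall strictly within its stated hypotheses, and the identification $\AAA_{0,\aps}''=\AAA_{1,\aps}''$ is immediate from the assumption that the separating hypersurface lies in the coincidence region. The only point that requires a line of justification is the legality of applying the theorem to the degenerate pair $(\AAA_0,\AAA_0)$, and as noted this is automatic.
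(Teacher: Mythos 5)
Your proof is correct and arrives at the formula cleanly. The paper's own justification is a one-sentence sketch: it says that, when both manifolds are cut, ``the right hand side of \eqref{E:gluing-sp} would contain two individual eta invariants on compact manifolds and a relative eta invariant $\xi(0;\AAA_{1,\aps}'',\AAA_{0,\aps}'')$ which vanishes identically,'' implicitly invoking a version of the gluing formula that cuts both $Y_0$ and $Y_1$ simultaneously. Your argument achieves the same cancellation, but does so while staying strictly within the hypotheses of Theorem~\ref{T:gluing-sp} as literally stated, which only cuts $Y_1$ and leaves $Y_0$ boundaryless: the second cut is obtained by applying the theorem to the degenerate pair $(\AAA_0,\AAA_0)$ on two copies of $Y_0$, combined with $\xi(0;\AAA_0,\AAA_0)=0$ from Proposition~\ref{P:reletafunc-prop} and the identification $\AAA_{0,\aps}''=\AAA_{1,\aps}''$ (which is exactly what makes $\xi(0;\AAA_{1,\aps}'',\AAA_{0,\aps}'')$ vanish in the paper's sketch). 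This is a tidy way to avoid quoting the gluing theorem in a setting where the undivided manifold already carries a boundary; the underlying idea is the same as the paper's, and the result is identical. One small hygiene point worth a half-sentence in a write-up: since $\xi(0;\AAA_{1,\aps}'',\AAA_0)$ and $\xi(0;\AAA_{0,\aps}'',\AAA_0)$ are both formed by pairing an operator on a manifold with boundary against $\AAA_0$ on the boundaryless $Y_0$, their equality rests on the fact (Remark~\ref{R:rel trace indep}) that the relative heat trace is independent of the ambient Hilbert-space decomposition, so that the literal equality $\AAA_{1,\aps}''=\AAA_{0,\aps}''$ as unbounded self-adjoint operators indeed forces equal relative traces; this is implicit in your step 3 and is fine, but deserves mention.
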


\subsection{A mod $2\ZZ$ spectral interpretation of the relative eta invariant}\label{SS:releta: ind=sp modZ}

In Subsection \ref{SS:releta: ind=sp}, we obtained an equality between the index-theoretic and spectral relative eta invariants under the circumstance that both operators are on the same manifold. Now we apply the gluing law of preceding subsections to generalize this equality to the case that the operators are on different manifolds. This is part (\romannumeral2) of Theorem \ref{T:intro-2}. The price paid is a stronger assumption and a mod $2\ZZ$ compromise.

\begin{theorem}\label{T:releta: ind=sp mod2Z}
Suppose $\AAA_0$ and $\AAA_1$ are cobordant self-adjoint strongly Callias-type operators on complete Riemannian manifolds $(Y_0,E_0)$ and $(Y_1,E_1)$, respectively satisfying Assumption \ref{A:traceclass-bvp}. Then
\[
\eta(\AAA_1,\AAA_0)\;=\;\eta(0;\AAA_1,\AAA_0)\mod2\ZZ.
\]
\end{theorem}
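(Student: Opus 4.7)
The strategy is to combine the two gluing laws already established (one for the index-theoretic invariant $\eta(\AAA_1,\AAA_0)$ in Theorem \ref{T:gluing-ind}, one for the spectral invariant $\eta(0;\AAA_1,\AAA_0)$ in Corollary \ref{C:gluing-sp}) and cancel the boundary contributions. The point is that both gluing formulas express the respective reduced relative eta invariant, modulo $\ZZ$, as \emph{the same} difference of reduced eta invariants on compact pieces with APS boundary condition, so subtracting eliminates the common terms.

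Concretely, I would first use the cobordism hypothesis to locate a common neck: by Definition \ref{D:almost compact cobordism}, an almost compact cobordism exhibits $Y_0$ and $Y_1$ as coinciding (via product identifications with some $Y'$) outside compact pieces, so $\AAA_0$ and $\AAA_1$ are identified at infinity through an isometry in the sense of Definition \ref{D:condition at infinity}. Choose a closed hypersurface $\Sigma$ sitting inside this common region, with $\Sigma \cong \Sigma_0 \subset Y_0$ and $\Sigma \cong \Sigma_1 \subset Y_1$, and push $\Sigma$ deep enough into the invertibility region so that the tangential operator $\BBB$ on $\Sigma$ is invertible (possible by Assumption \ref{A:traceclass-bvp}(ii) applied on the common cylindrical end). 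Arrange product structures in a tubular neighborhood of $\Sigma$, as required by the setup of Subsection \ref{SS:family bc}. Then $\Sigma$ splits $Y_j = Y_j' \cup_{\Sigma_j} Y_j''$ with $Y_j'$ compact, and the operators $\AAA_{j,\aps}'$ on the compact pieces $Y_j'$ with APS boundary condition are self-adjoint Dirac-type operators to which the standard compact eta theory applies.

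Now I would apply the two gluing formulas in parallel to this same splitting. Theorem \ref{T:gluing-ind} yields
\begin{equation*}
\xi(\AAA_1,\AAA_0) \;\equiv\; \xi(\AAA_{1,\aps}') - \xi(\AAA_{0,\aps}') \pmod{\ZZ},
\end{equation*}
and Corollary \ref{C:gluing-sp}, which is applicable because Assumption \ref{A:traceclass-bvp} is exactly what is needed to run the argument of Subsection \ref{SS:gluing-sp}, gives
\begin{equation*}
\xi(0;\AAA_1,\AAA_0) \;\equiv\; \xi(\AAA_{1,\aps}') - \xi(\AAA_{0,\aps}') \pmod{\ZZ}.
\end{equation*}
Subtracting these two congruences, the compact-piece contributions cancel and one gets $\xi(\AAA_1,\AAA_0) \equiv \xi(0;\AAA_1,\AAA_0) \pmod{\ZZ}$. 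Converting from reduced invariants back to eta invariants via \eqref{E:reduced releta-1} and \eqref{E:reduced releta sp} (both of which differ from the corresponding eta invariant by the same $\dim\ker\AAA_1 - \dim\ker\AAA_0$ correction), this upgrades to $\eta(\AAA_1,\AAA_0) \equiv \eta(0;\AAA_1,\AAA_0) \pmod{2\ZZ}$, as claimed.

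The main technical obstacle is to check that the hypersurface $\Sigma$ can simultaneously be chosen to satisfy the hypotheses of both gluing theorems: it must lie in the common-at-infinity region of $Y_0$ and $Y_1$, support a product structure of $(\AAA_0,\AAA_1)$, and carry an invertible tangential operator $\BBB$. The first two conditions are automatic from the cobordism set-up and the fact that $\AAA_0,\AAA_1$ are product near $\Sigma$ on the cylindrical end. Invertibility of $\BBB$ is guaranteed by pushing $\Sigma$ sufficiently far into the region where $\AAA_j$ is invertible at infinity, combined with the product decomposition that relates the spectrum of the bulk operator to that of $\BBB$. Once $\Sigma$ is fixed, the two gluing theorems apply verbatim and the conclusion is purely algebraic; no further analytic work is required, which is precisely why one pays only a $\mod 2\ZZ$ discrepancy rather than an honest equality (the integer ambiguity is absorbed into the gluing formulas).
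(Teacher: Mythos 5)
Your proof is correct and follows the paper's argument: cut $Y_0,Y_1$ along a common hypersurface $\Sigma_0\cong\Sigma_1$ in the overlap region, apply Theorem \ref{T:gluing-ind} and Corollary \ref{C:gluing-sp} to get $\xi(\AAA_1,\AAA_0)\equiv\xi(0;\AAA_1,\AAA_0)\pmod\ZZ$ after cancelling the compact-piece contributions, and then pass from $\xi$ to $\eta$ via $\eta(\AAA_1,\AAA_0)-\eta(0;\AAA_1,\AAA_0)=2\bigl(\xi(\AAA_1,\AAA_0)-\xi(0;\AAA_1,\AAA_0)\bigr)$. The only addition is your explicit discussion of why $\Sigma$ can be chosen with invertible tangential operator $\BBB$, a point the paper simply asserts in Subsection \ref{SS:gluing-ind}.
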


\begin{proof}
Cut $Y_0$ and $Y_1$ along a hypersurface $\Sigma_0\cong\Sigma_1$ in the subset where $\AAA_0$ and $\AAA_1$ coincide. From Theorem \ref{T:gluing-ind} and Corollary \ref{C:gluing-sp},
\[
\xi(\AAA_1,\AAA_0)\;=\;\xi(0;\AAA_1,\AAA_0)\mod\ZZ.
\]
The theorem then follows by noticing that
\[
\eta(\AAA_1,\AAA_0)\,-\,\eta(0;\AAA_1,\AAA_0)\;=\;2\,\big(\xi(\AAA_1,\AAA_0)\,-\,\xi(0;\AAA_1,\AAA_0)\big).
\]
\end{proof}

Recall that by definition, $\eta(\AAA_1,\AAA_0)$ and $\xi(\AAA_1,\AAA_0)$ are always integers when the manifolds $Y_0$ and $Y_1$ are even-dimensional. So we have

\begin{corollary}\label{C:releta: ind=sp mod2Z}
Suppose $\AAA_0$ and $\AAA_1$ are cobordant self-adjoint strongly Callias-type operators on even-dimensional complete Riemannian manifolds $(Y_0,E_0)$ and $(Y_1,E_1)$, respectively satisfying Assumption \ref{A:traceclass-bvp}. Then the reduced spectral relative eta invariant $\xi(0;\AAA_1,\AAA_0)$ is an integer.

Putting it in another way, the mod $\ZZ$ reduction of the reduced spectral relative eta invariant $\oxi(0;\AAA_1,\AAA_0)\in\RR/\ZZ$ is an almost compact cobordism invariant in the category of self-adjoint strongly Callias-type operators on even-dimensional manifolds, meaning that $\oxi(0;\AAA_1,\AAA_0)=0$ if $\AAA_0$ and $\AAA_1$ are cobordant.
\end{corollary}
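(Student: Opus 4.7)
The plan is to obtain the corollary as a direct consequence of Theorem \ref{T:releta: ind=sp mod2Z} together with the parity observation in Remark \ref{R:releta}(ii). No new analysis is required: the two heavy lifts---the variation formulas of Section \ref{S:varFormula} and the gluing argument of Section \ref{S:gluing law}---have already produced the mod $2\ZZ$ identity, and all that remains is to feed in the fact that, in even dimension on the boundary, the index-theoretic side is genuinely integer-valued.

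Concretely, I would proceed in three short steps. First, apply Theorem \ref{T:releta: ind=sp mod2Z} to the given pair to record
\[
\eta(\AAA_1,\AAA_0)\;\equiv\;\eta(0;\AAA_1,\AAA_0)\pmod{2\ZZ}.
\]
Using the definitions \eqref{E:reduced releta-1} and \eqref{E:reduced releta sp}, the term $\dim\ker\AAA_1-\dim\ker\AAA_0$ appears identically on both sides, so dividing by $2$ yields
\[
\xi(\AAA_1,\AAA_0)\;\equiv\;\xi(0;\AAA_1,\AAA_0)\pmod{\ZZ}.
\]
Second, invoke Remark \ref{R:releta}(ii): since $Y_0$ and $Y_1$ are even-dimensional, any almost compact cobordism $(X,\DD)$ between $\AAA_0$ and $\AAA_1$ is odd-dimensional, so $\AS(\DD^+)\equiv 0$; formula \eqref{E:reduced releta-2} then reduces to $\xi(\AAA_1,\AAA_0)=\ind \DD^+_{B_0\oplus B_1}+\dim\ker\AAA_1\in\ZZ$. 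Third, combine the two facts to conclude $\xi(0;\AAA_1,\AAA_0)\in\ZZ$, which is exactly the statement that $\oxi(0;\AAA_1,\AAA_0)=0$ in $\RR/\ZZ$.

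For the cobordism-invariant reformulation, the existence of any almost compact cobordism $(X,\DD)$ between $\AAA_0$ and $\AAA_1$ is precisely what was used to invoke Theorem \ref{T:releta: ind=sp mod2Z}; Proposition \ref{P:reletainv-prop} together with the additive analogue for the spectral side (Proposition \ref{P:reletafunc-prop}) guarantees that the mod $\ZZ$ class $\oxi(0;\AAA_1,\AAA_0)$ depends only on the cobordism class of the pair, and the vanishing just established shows this class is trivial.

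Since every ingredient is already in hand, there is no genuine obstacle; the only point that warrants care is the bookkeeping of the $\dim\ker$ terms when passing between $\eta$ and $\xi$, to make sure that the mod $2\ZZ$ identity descends cleanly to a mod $\ZZ$ identity for $\xi$ without losing a factor of $2$. This is a one-line check from the definitions.
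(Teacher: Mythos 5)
Your proposal is correct and follows essentially the same route as the paper, which simply combines Theorem \ref{T:releta: ind=sp mod2Z} with Remark \ref{R:releta}(ii): since the $\dim\ker$ terms cancel between \eqref{E:reduced releta-1} and \eqref{E:reduced releta sp}, the mod $2\ZZ$ identity for $\eta$ descends to a mod $\ZZ$ identity for $\xi$, and the integrality of $\xi(\AAA_1,\AAA_0)$ in the odd-dimensional cobordism case forces $\xi(0;\AAA_1,\AAA_0)\in\ZZ$. The closing appeal to Propositions \ref{P:reletainv-prop} and \ref{P:reletafunc-prop} is unnecessary, as the ``cobordism-invariant'' rephrasing in the statement is by design just a restatement of $\xi(0;\AAA_1,\AAA_0)\in\ZZ$.
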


\bibliographystyle{amsplain}

\begin{bibdiv}
\begin{biblist}

\bib{Anghel93}{article}{
      author={Anghel, N.},
       title={An abstract index theorem on noncompact {R}iemannian manifolds},
        date={1993},
        ISSN={0362-1588},
     journal={Houston J. Math.},
      volume={19},
      number={2},
       pages={223\ndash 237},
      review={\MR{1225459}},
}

\bib{APS1}{article}{
      author={Atiyah, M.~F.},
      author={Patodi, V.~K.},
      author={Singer, I.~M.},
       title={Spectral asymmetry and {R}iemannian geometry. {I}},
        date={1975},
        ISSN={0305-0041},
     journal={Math. Proc. Cambridge Philos. Soc.},
      volume={77},
       pages={43\ndash 69},
         url={https://doi.org/10.1017/S0305004100049410},
      review={\MR{397797}},
}

\bib{APS2}{article}{
      author={Atiyah, M.~F.},
      author={Patodi, V.~K.},
      author={Singer, I.~M.},
       title={Spectral asymmetry and {R}iemannian geometry. {II}},
        date={1975},
        ISSN={0305-0041},
     journal={Math. Proc. Cambridge Philos. Soc.},
      volume={78},
      number={3},
       pages={405\ndash 432},
         url={https://doi.org/10.1017/S0305004100051872},
      review={\MR{397798}},
}

\bib{APS3}{article}{
      author={Atiyah, M.~F.},
      author={Patodi, V.~K.},
      author={Singer, I.~M.},
       title={Spectral asymmetry and {R}iemannian geometry. {III}},
        date={1976},
        ISSN={0305-0041},
     journal={Math. Proc. Cambridge Philos. Soc.},
      volume={79},
      number={1},
       pages={71\ndash 99},
         url={https://doi.org/10.1017/S0305004100052105},
      review={\MR{397799}},
}

\bib{BaerBallmann12}{incollection}{
      author={B\"{a}r, C.},
      author={Ballmann, W.},
       title={Boundary value problems for elliptic differential operators of
  first order},
        date={2012},
   booktitle={Surveys in differential geometry. {V}ol. {XVII}},
      series={Surv. Differ. Geom.},
      volume={17},
   publisher={Int. Press, Boston, MA},
       pages={1\ndash 78},
         url={https://doi.org/10.4310/SDG.2012.v17.n1.a1},
      review={\MR{3076058}},
}

\bib{BismutCheeger91remarks}{incollection}{
      author={Bismut, J.-M.},
      author={Cheeger, J.},
       title={Remarks on the index theorem for families of {D}irac operators on
  manifolds with boundary},
        date={1991},
   booktitle={Differential geometry},
      series={Pitman Monogr. Surveys Pure Appl. Math.},
      volume={52},
   publisher={Longman Sci. Tech., Harlow},
       pages={59\ndash 83},
      review={\MR{1173033}},
}

\bib{BoosWoj93book}{book}{
      author={Boo\ss~Bavnbek, B.},
      author={Wojciechowski, K.~P.},
       title={Elliptic boundary problems for {D}irac operators},
      series={Mathematics: Theory \& Applications},
   publisher={Birkh\"{a}user Boston, Inc., Boston, MA},
        date={1993},
        ISBN={0-8176-3681-1},
         url={https://doi.org/10.1007/978-1-4612-0337-7},
      review={\MR{1233386}},
}

\bib{BransonGilkey92}{article}{
      author={Branson, T.~P.},
      author={Gilkey, P.~B.},
       title={Residues of the eta function for an operator of {D}irac type},
        date={1992},
        ISSN={0022-1236},
     journal={J. Funct. Anal.},
      volume={108},
      number={1},
       pages={47\ndash 87},
         url={https://doi.org/10.1016/0022-1236(92)90146-A},
      review={\MR{1174158}},
}

\bib{BrCecchini18}{article}{
      author={Braverman, M.},
      author={Cecchini, S.},
       title={Callias-type operators in von {N}eumann algebras},
        date={2018},
        ISSN={1050-6926},
     journal={J. Geom. Anal.},
      volume={28},
      number={1},
       pages={546\ndash 586},
      review={\MR{3745871}},
}

\bib{BrMaschler19}{article}{
      author={Braverman, M.},
      author={Maschler, G.},
       title={Equivariant {APS} index for {D}irac operators of non-product type
  near the boundary},
        date={2019},
        ISSN={0022-2518},
     journal={Indiana Univ. Math. J.},
      volume={68},
      number={2},
       pages={435\ndash 501},
      review={\MR{3951071}},
}

\bib{BrShi17-2}{article}{
      author={Braverman, M.},
      author={Shi, P.},
       title={An {APS} index theorem for even-dimensional manifolds with
  non-compact boundary},
        date={2017},
     journal={to appear in Comm. Anal. Geom.},
      eprint={https://arxiv.org/abs/1708.08336},
         url={https://arxiv.org/abs/1708.08336},
}

\bib{BrShi17}{article}{
      author={Braverman, M.},
      author={Shi, P.},
       title={The {A}tiyah--{P}atodi--{S}inger index on manifolds with
  non-compact boundary},
        date={2020},
     journal={J. Geom. Anal.},
      eprint={https://arxiv.org/abs/1706.06737},
         url={https://arxiv.org/abs/1706.06737},
}

\bib{BruningLesch99}{article}{
      author={Br\"{u}ning, J.},
      author={Lesch, M.},
       title={On the {$\eta$}-invariant of certain nonlocal boundary value
  problems},
        date={1999},
        ISSN={0012-7094},
     journal={Duke Math. J.},
      volume={96},
      number={2},
       pages={425\ndash 468},
         url={https://doi.org/10.1215/S0012-7094-99-09613-8},
      review={\MR{1666570}},
}

\bib{Bunke92}{article}{
      author={Bunke, U.},
       title={Relative index theory},
        date={1992},
        ISSN={0022-1236},
     journal={J. Funct. Anal.},
      volume={105},
      number={1},
       pages={63\ndash 76},
         url={https://doi.org/10.1016/0022-1236(92)90072-Q},
      review={\MR{1156670}},
}

\bib{Bunke93comparison}{inproceedings}{
      author={Bunke, U.},
       title={Comparison of {D}irac operators on manifolds with boundary},
        date={1993},
   booktitle={Proceedings of the {W}inter {S}chool ``{G}eometry and {P}hysics''
  ({S}rn\'{\i}, 1991)},
       pages={133\ndash 141},
      review={\MR{1246627}},
}

\bib{Bunke95eta}{article}{
      author={Bunke, U.},
       title={On the gluing problem for the {$\eta$}-invariant},
        date={1995},
        ISSN={0022-040X},
     journal={J. Differential Geom.},
      volume={41},
      number={2},
       pages={397\ndash 448},
         url={http://projecteuclid.org/euclid.jdg/1214456222},
      review={\MR{1331973}},
}

\bib{Cheeger83singular}{article}{
      author={Cheeger, J.},
       title={Spectral geometry of singular {R}iemannian spaces},
        date={1983},
        ISSN={0022-040X},
     journal={J. Differential Geom.},
      volume={18},
      number={4},
       pages={575\ndash 657 (1984)},
      review={\MR{730920}},
}

\bib{Cheeger87conical}{article}{
      author={Cheeger, J.},
       title={{$\eta$}-invariants, the adiabatic approximation and conical
  singularities. {I}. {T}he adiabatic approximation},
        date={1987},
        ISSN={0022-040X},
     journal={J. Differential Geom.},
      volume={26},
      number={1},
       pages={175\ndash 221},
      review={\MR{892036}},
}

\bib{CheegerGromovTaylor82}{article}{
      author={Cheeger, J.},
      author={Gromov, M.},
      author={Taylor, M.},
       title={Finite propagation speed, kernel estimates for functions of the
  {L}aplace operator, and the geometry of complete {R}iemannian manifolds},
        date={1982},
        ISSN={0022-040X},
     journal={J. Differential Geometry},
      volume={17},
      number={1},
       pages={15\ndash 53},
         url={http://projecteuclid.org/euclid.jdg/1214436699},
      review={\MR{658471}},
}

\bib{ChengLiYau81}{article}{
      author={Cheng, S.~Y.},
      author={Li, P.},
      author={Yau, S.~T.},
       title={On the upper estimate of the heat kernel of a complete
  {R}iemannian manifold},
        date={1981},
        ISSN={0002-9327},
     journal={Amer. J. Math.},
      volume={103},
      number={5},
       pages={1021\ndash 1063},
         url={http://dx.doi.org/10.2307/2374257},
      review={\MR{630777}},
}

\bib{Chernoff73}{article}{
      author={Chernoff, P.~R.},
       title={Essential self-adjointness of powers of generators of hyperbolic
  equations},
        date={1973},
     journal={J. Functional Analysis},
      volume={12},
       pages={401\ndash 414},
         url={https://doi.org/10.1016/0022-1236(73)90003-7},
      review={\MR{0369890}},
}

\bib{DaiFreed94}{incollection}{
      author={Dai, X.},
      author={Freed, D.~S.},
       title={{$\eta$}-invariants and determinant lines},
        date={1994},
      volume={35},
       pages={5155\ndash 5194},
         url={https://doi.org/10.1063/1.530747},
        note={Topology and physics},
      review={\MR{1295462}},
}

\bib{DonnellyLi82}{article}{
      author={Donnelly, H.},
      author={Li, P.},
       title={Lower bounds for the eigenvalues of {R}iemannian manifolds},
        date={1982},
        ISSN={0026-2285},
     journal={Michigan Math. J.},
      volume={29},
      number={2},
       pages={149\ndash 161},
         url={http://projecteuclid.org/euclid.mmj/1029002668},
      review={\MR{654476}},
}

\bib{DouglasWoj91}{article}{
      author={Douglas, R.~G.},
      author={Wojciechowski, K.~P.},
       title={Adiabatic limits of the {$\eta$}-invariants. {T}he
  odd-dimensional {A}tiyah-{P}atodi-{S}inger problem},
        date={1991},
        ISSN={0010-3616},
     journal={Comm. Math. Phys.},
      volume={142},
      number={1},
       pages={139\ndash 168},
         url={http://projecteuclid.org/euclid.cmp/1104248492},
      review={\MR{1137777}},
}

\bib{EichhornBook09}{book}{
      author={Eichhorn, J.},
       title={Relative index theory, determinants and torsion for open
  manifolds},
   publisher={World Scientific Publishing Co. Pte. Ltd., Hackensack, NJ},
        date={2009},
        ISBN={978-981-277-144-5; 981-277-144-1},
         url={https://doi.org/10.1142/9789812771452},
      review={\MR{2542691}},
}

\bib{FoxHaskell05}{article}{
      author={Fox, J.},
      author={Haskell, P.},
       title={The {A}tiyah-{P}atodi-{S}inger theorem for perturbed {D}irac
  operators on even-dimensional manifolds with bounded geometry},
        date={2005},
        ISSN={1076-9803},
     journal={New York J. Math.},
      volume={11},
       pages={303\ndash 332},
         url={http://nyjm.albany.edu:8000/j/2005/11_303.html},
      review={\MR{2154358}},
}

\bib{Gilkey81globalEta}{article}{
      author={Gilkey, P.~B.},
       title={The residue of the global {$\eta $} function at the origin},
        date={1981},
        ISSN={0001-8708},
     journal={Adv. in Math.},
      volume={40},
      number={3},
       pages={290\ndash 307},
         url={https://doi.org/10.1016/S0001-8708(81)80007-2},
      review={\MR{624667}},
}

\bib{Gilkey95book}{book}{
      author={Gilkey, P.~B.},
       title={Invariance theory, the heat equation, and the {A}tiyah-{S}inger
  index theorem},
     edition={Second},
      series={Studies in Advanced Mathematics},
   publisher={CRC Press, Boca Raton, FL},
        date={1995},
        ISBN={0-8493-7874-4},
      review={\MR{1396308}},
}

\bib{GilkeySmith83}{article}{
      author={Gilkey, P.~B.},
      author={Smith, L.},
       title={The eta invariant for a class of elliptic boundary value
  problems},
        date={1983},
        ISSN={0010-3640},
     journal={Comm. Pure Appl. Math.},
      volume={36},
      number={1},
       pages={85\ndash 131},
         url={https://doi.org/10.1002/cpa.3160360105},
      review={\MR{680084}},
}

\bib{Goette12compEta}{incollection}{
      author={Goette, S.},
       title={Computations and applications of {$\eta$} invariants},
        date={2012},
   booktitle={Global differential geometry},
      series={Springer Proc. Math.},
      volume={17},
   publisher={Springer, Heidelberg},
       pages={401\ndash 433},
      review={\MR{3289849}},
}

\bib{GromovLawson83}{article}{
      author={Gromov, M.},
      author={Lawson, H.~B., Jr.},
       title={Positive scalar curvature and the {D}irac operator on complete
  {R}iemannian manifolds},
        date={1983},
        ISSN={0073-8301},
     journal={Inst. Hautes \'{E}tudes Sci. Publ. Math.},
      number={58},
       pages={83\ndash 196 (1984)},
         url={http://www.numdam.org/item?id=PMIHES_1983__58__83_0},
      review={\MR{720933}},
}

\bib{GMP10}{article}{
      author={Guillarmou, C.},
      author={Moroianu, S.},
      author={Park, J.},
       title={Eta invariant and {S}elberg zeta function of odd type over convex
  co-compact hyperbolic manifolds},
        date={2010},
        ISSN={0001-8708},
     journal={Adv. Math.},
      volume={225},
      number={5},
       pages={2464\ndash 2516},
         url={https://doi.org/10.1016/j.aim.2010.05.004},
      review={\MR{2680173}},
}

\bib{KirkLesch04}{article}{
      author={Kirk, P.},
      author={Lesch, M.},
       title={The {$\eta$}-invariant, {M}aslov index, and spectral flow for
  {D}irac-type operators on manifolds with boundary},
        date={2004},
        ISSN={0933-7741},
     journal={Forum Math.},
      volume={16},
      number={4},
       pages={553\ndash 629},
         url={https://doi.org/10.1515/form.2004.027},
      review={\MR{2044028}},
}

\bib{KlimekWoj93}{article}{
      author={Klimek, S.},
      author={Wojciechowski, K.~P.},
       title={{$\eta$}-invariants on manifolds with cylindrical end},
        date={1993},
        ISSN={0926-2245},
     journal={Differential Geom. Appl.},
      volume={3},
      number={2},
       pages={191\ndash 201},
         url={https://doi.org/10.1016/0926-2245(93)90030-5},
      review={\MR{1243543}},
}

\bib{LawMic89}{book}{
      author={Lawson, H.~B., Jr.},
      author={Michelsohn, M.-L.},
       title={Spin geometry},
      series={Princeton Mathematical Series},
   publisher={Princeton University Press, Princeton, NJ},
        date={1989},
      volume={38},
        ISBN={0-691-08542-0},
      review={\MR{1031992}},
}

\bib{Lesch05sf}{incollection}{
      author={Lesch, M.},
       title={The uniqueness of the spectral flow on spaces of unbounded
  self-adjoint {F}redholm operators},
        date={2005},
   booktitle={Spectral geometry of manifolds with boundary and decomposition of
  manifolds},
      series={Contemp. Math.},
      volume={366},
   publisher={Amer. Math. Soc., Providence, RI},
       pages={193\ndash 224},
         url={https://doi.org/10.1090/conm/366/06730},
      review={\MR{2114489}},
}

\bib{Levend96}{article}{
      author={Levendorski\u{\i}, S.~Z.},
       title={Spectral asymptotics with a remainder estimate for
  {S}chr\"{o}dinger operators with slowly growing potentials},
        date={1996},
        ISSN={0308-2105},
     journal={Proc. Roy. Soc. Edinburgh Sect. A},
      volume={126},
      number={4},
       pages={829\ndash 836},
         url={https://doi.org/10.1017/S030821050002309X},
      review={\MR{1405759}},
}

\bib{LMP11cusps}{article}{
      author={Loya, P.},
      author={Moroianu, S.},
      author={Park, J.},
       title={Regularity of the eta function on manifolds with cusps},
        date={2011},
        ISSN={0025-5874},
     journal={Math. Z.},
      volume={269},
      number={3-4},
       pages={955\ndash 975},
      review={\MR{2860273}},
}

\bib{LoyaPark06}{article}{
      author={Loya, P.},
      author={Park, J.},
       title={On the gluing problem for the spectral invariants of {D}irac
  operators},
        date={2006},
        ISSN={0001-8708},
     journal={Adv. Math.},
      volume={202},
      number={2},
       pages={401\ndash 450},
         url={https://doi.org/10.1016/j.aim.2005.03.012},
      review={\MR{2222357}},
}

\bib{Melrose93APS}{book}{
      author={Melrose, R.~B.},
       title={The {A}tiyah-{P}atodi-{S}inger index theorem},
      series={Research Notes in Mathematics},
   publisher={A K Peters, Ltd., Wellesley, MA},
        date={1993},
      volume={4},
        ISBN={1-56881-002-4},
         url={https://doi.org/10.1016/0377-0257(93)80040-i},
      review={\MR{1348401}},
}

\bib{MRS16}{article}{
      author={Mrowka, T.},
      author={Ruberman, D.},
      author={Saveliev, N.},
       title={An index theorem for end-periodic operators},
        date={2016},
        ISSN={0010-437X},
     journal={Compos. Math.},
      volume={152},
      number={2},
       pages={399\ndash 444},
         url={https://doi.org/10.1112/S0010437X15007502},
      review={\MR{3462557}},
}

\bib{Muller94}{article}{
      author={M\"{u}ller, W.},
       title={Eta invariants and manifolds with boundary},
        date={1994},
        ISSN={0022-040X},
     journal={J. Differential Geom.},
      volume={40},
      number={2},
       pages={311\ndash 377},
         url={http://projecteuclid.org/euclid.jdg/1214455539},
      review={\MR{1293657}},
}

\bib{Mueller96}{article}{
      author={M\"{u}ller, W.},
       title={On the {$L^2$}-index of {D}irac operators on manifolds with
  corners of codimension two. {I}},
        date={1996},
        ISSN={0022-040X},
     journal={J. Differential Geom.},
      volume={44},
      number={1},
       pages={97\ndash 177},
         url={http://projecteuclid.org/euclid.jdg/1214458741},
      review={\MR{1420351}},
}

\bib{Muller98}{article}{
      author={M\"{u}ller, W.},
       title={Relative zeta functions, relative determinants and scattering
  theory},
        date={1998},
        ISSN={0010-3616},
     journal={Comm. Math. Phys.},
      volume={192},
      number={2},
       pages={309\ndash 347},
         url={https://doi.org/10.1007/s002200050301},
      review={\MR{1617554}},
}

\bib{PiazzaVertman19edge}{article}{
      author={Piazza, P.},
      author={Vertman, B.},
       title={Eta and rho invariants on manifolds with edges},
        date={2019},
        ISSN={0373-0956},
     journal={Ann. Inst. Fourier (Grenoble)},
      volume={69},
      number={5},
       pages={1955\ndash 2035},
         url={http://aif.cedram.org/item?id=AIF_2019__69_5_1955_0},
      review={\MR{4018253}},
}

\bib{Roe88-index}{article}{
      author={Roe, J.},
       title={An index theorem on open manifolds. {I}, {II}},
        date={1988},
        ISSN={0022-040X},
     journal={J. Differential Geom.},
      volume={27},
      number={1},
       pages={87\ndash 113, 115\ndash 136},
         url={http://projecteuclid.org/euclid.jdg/1214441652},
      review={\MR{918459 (89a:58102)}},
}

\bib{Woj94}{article}{
      author={Wojciechowski, K.~P.},
       title={The additivity of the {$\eta$}-invariant: the case of an
  invertible tangential operator},
        date={1994},
        ISSN={0362-1588},
     journal={Houston J. Math.},
      volume={20},
      number={4},
       pages={603\ndash 621},
      review={\MR{1305932}},
}

\bib{Woj95}{article}{
      author={Wojciechowski, K.~P.},
       title={The additivity of the {$\eta$}-invariant. {T}he case of a
  singular tangential operator},
        date={1995},
        ISSN={0010-3616},
     journal={Comm. Math. Phys.},
      volume={169},
      number={2},
       pages={315\ndash 327},
      review={\MR{1329198}},
}

\end{biblist}
\end{bibdiv}

\end{document}